\numberwithin{equation}{section}
\newtheorem{theorem}{\bf Theorem}[section]
\newtheorem{corollary}[theorem]{\bf Corollary}
\newtheorem{proposition}[theorem]{\bf Proposition}
\newtheorem{lemma}[theorem]{\bf Lemma}
\newtheorem{definition}[theorem]{\bf Definition}
\newtheorem{example}[theorem]{\bf Example}
\newtheorem{definition-theorem}[theorem]{\bf Theorem-Definition}
\newtheorem{remark}[theorem]{\bf Remark}
\def\bR{\mathbb{R}}
\def\bC{\mathbb{C}}
\def\bZ{\mathbb{Z}}
\def\bS{\mathbb{S}}
\def\bH{\mathbb{H}}
\def\K{\mathbb{K}}
\def\g{\mathfrak{g}}
  \def\c{\mathfrak{c}}
\def\quott({/\! /}
\def\g{\mathfrak{g}}
\def\p{\mathfrak{p}}
\def\u{\mathfrak{u}}
\def\o{\mathfrak{o}}
\def\A{{\mathcal A}}
\def\B{{\mathcal B}}
\def\SO{{\rm SO}}
\def\O{{\rm O}}
\def\SU{{\rm  SU}}
\def\S{{\rm  S}}
\def\U{{\rm  U}}
\def\G{{\rm  G}}
\def\Sp{{\rm  Sp}}
\def\tP{{P}}
\def\tQ{\tilde{Q}}
\def\bt{\bar{\tau}}
\title[Bott periodicity for inclusions]
{ Bott periodicity for inclusions of  symmetric spaces}
\author[A.-L. Mare]{Augustin-Liviu\ Mare}
\author[P. Quast]{Peter \ Quast}
\date{}
\begin{document}
\address[A.-L. Mare]{Department of Mathematics and Statistics\\ University of Regina\\ Canada}
\address[P. Quast]{Institut f\"{u}r Mathematik\\ Universit\"{a}t Augsburg\\ Germany}

\email[A.-L. Mare]{mareal@math.uregina.ca}
\email[P. Quast]{peter.quast@math.uni-augsburg.de}

\begin{abstract}
When looking at Bott's original proof of his periodicity theorem for the stable homotopy groups of the orthogonal and unitary groups, one sees in the background  a differential geometric periodicity phenomenon. We show that this geometric  phenomenon extends to the standard inclusion of the orthogonal group  into the unitary group. 
Standard inclusions between other
classical Riemannian symmetric spaces are considered as well. An application to homotopy theory is also discussed. 

\vspace{0.3cm}

\noindent {\it 2010 Mathematics Subject Classification:} 53C35, 55R45, 53C40

\end{abstract}

\maketitle

\tableofcontents

\newpage

\section{Introduction}

Bott's original proof of his periodicity theorem  \cite{Bo} 
is   differential geometric in its nature.
It relies on the observation that in  a compact Riemannian symmetric space $P$ one can choose two points $p$ and $q$ in ``special position"  such that the connected components of the space
of shortest geodesics in $P$ joining $p$ and $q$
are again compact symmetric spaces.
%The unitary group $\U_n$ has a canonical symmetric space structure.
%One performs the above construction for certain $p, q\in \U_n$.
Set $P_0=P$ and let $P_1$ be one of the resulting
 connected components.
 This construction can be repeated inductively:
 given points $p_j, q_j$ in ``special position" in $P_j$, then
 $P_{j+1}$ is one of the connected components of the space of shortest geodesic segments
 in $P_j$ between $p_j$ and $q_j$.
 If we start this iterative process with the classical groups
 $$ P_0:=\SO_{16n},\quad  \tilde{P}_0:=\U_{16n},\quad \bar{P}_0:=\Sp_{16n}$$
 %\begin{align*} {}& P_0=\SO_{16n},\\ {}& \tilde{P}_0=\U_{16n},\\ {}& \bar{P}_0=\Sp_{16n} \end{align*}
and make at each step appropriate choices of the two points and of the connected component, one obtains
$$P_8=\SO_n,\quad \tilde{P}_2 = \U_{8n},\quad \bar{P}_8=\Sp_n.$$
%\begin{align*} {}& P_8=\SO_n,\\ {}& \tilde{P}_2 = \U_{8n},\\{}& \bar{P}_8=\Sp_n.\end{align*}
Each of the three processes can be continued, provided that $n$ is divisible
by a sufficiently high power of $2$. We obtain (periodically)
copies of a special orthogonal, unitary, and symplectic group after
every eighth, second, respectively eighth iteration.
These purely geometric periodicity phenomena are the key ingredients of
Bott's proof of his periodicity theorems \cite{Bo} for the stable homotopy groups
$\pi_i(\O)$, $\pi_i(\U)$, and $\pi_i(\Sp)$ (see also the remark at the end of this section).

In his book \cite{Mi}, Milnor constructed  totally geodesic embeddings
$$P_{k+1}\subset P_k, \quad \tilde{P}_{k+1}\subset \tilde{P}_k, \quad
\bar{P}_{k+1}\subset \bar{P}_k, $$ for all $k=0,1,\ldots, 7$.
In each case, the inclusion is given by the map which  assigns to a geodesic
its midpoint (cf.~\cite{Qu} and \cite{Ma-Qu}, see also Section \ref{sectiondoi} below).

The goal of this paper is to establish connections between the following three
chains of symmetric spaces:
\begin{align*}
P_0\supset P_1\supset P_2 \supset \ldots \supset P_8,\\
\tilde{P}_0\supset \tilde{P}_1\supset \tilde{P}_2 \supset \ldots \supset \tilde{P}_8,\\
\bar{P}_0\supset \bar{P}_1\supset \bar{P}_2 \supset \ldots \supset \bar{P}_8.
\end{align*}
We will refer to them as the $\SO$-, $\U$-, respectively $\Sp$-{\it Bott chains.}
Starting with the natural inclusions
$$ P_0=\SO_{16n}\subset \U_{16n} =\tilde{P_0}
 \quad {\rm and} \quad
 \tilde{P}_0 = \U_{16n} \subset \Sp_{16n}=\bar{P}_0$$
 we show that the iterative process above provides inclusions
 $$P_j\subset \tilde{P}_j \quad {\rm and} \quad
 \tilde{P}_j\subset \bar{P}_j$$
 for all $j=0,1,\ldots, 8.$ These are all canonical reflective inclusions of symmetric spaces,
 i.~e.~they can be realized as  fixed point sets of isometric involutions  (see Appendix \ref{apa},
 especially Tables 5 and 6 and Subsections \ref{1e} - \ref{8em})
 and make the following diagrams commutative:

  \begin{equation*}\label{ex}
\vcenter{\xymatrix{
P_0\ar[d]^{\cap} &
P_1\ar[d]^{\cap}
\ar[l]_{  \supset  } &
P_2
\ar[l]_{\supset} \ar[d]^{\cap}  & \ar[l]_{\supset}\cdots&
P_8\ar[l]_{\supset}\ar[d]^{\cap} &
\\
\tilde{P}_0 &
\tilde{P}_1
\ar[l]_{  \supset  } &
\tilde{P}_2
\ar[l]_{\supset}  & \ar[l]_{\supset}\cdots&
\tilde{P}_8\ar[l]_{\supset} &\\
}}
\end{equation*}

 \begin{equation*}\label{pseq}
\vcenter{\xymatrix{
\tilde{P}_0\ar[d]^{\cap} &
\tilde{P}_1\ar[d]^{\cap}
\ar[l]_{  \supset  } &
\tilde{P}_2
\ar[l]_{\supset} \ar[d]^{\cap}  & \ar[l]_{\supset}\cdots&
\tilde{P}_8\ar[l]_{\supset}\ar[d]^{\cap} &
\\
\bar{P}_0 &
\bar{P}_1
\ar[l]_{  \supset  } &
\bar{P}_2
\ar[l]_{\supset}  & \ar[l]_{\supset}\cdots&
\bar{P}_8\ar[l]_{\supset} &\\
}}
\end{equation*}
Moreover, the vertical inclusions are periodic,  with period equal to $8$.
Concretely, we show that up to isometries, the inclusions
$$P_8\subset \tilde{P}_8 \quad {\rm and} \quad \tilde{P}_8\subset \bar{P}_8$$ are again the natural inclusions
 $$\SO_n\subset \U_n \quad {\rm and} \quad \U_n\subset \Sp_n$$
(see Theorems \ref{co}, \ref{coc} and Remark \ref{ream} below).
We mention that all inclusions in the two diagrams above are actually reflective. 
For example, notice that $P_4=\Sp_{2n}$, $\tilde{P}_4=\U_{4n}$,
 and $\bar{P}_4=\SO_{8n}$; the inclusions
 $$P_4\subset \tilde{P}_4 \quad {\rm and } \quad \tilde{P}_4\subset \bar{P}_4$$
 are essentially the usual subgroup inclusions
 $$\Sp_{2n}\subset \U_{4n} \quad {\rm and} \quad \U_{4n}\subset \SO_{8n}$$
 (see Remark \ref{samesp}).

\noindent{\bf Remark.}
We recall that the celebrated periodicity theorem of Bott \cite{Bo} concerns the
stable homotopy groups $\pi_i(\O)$, $\pi_i(\U)$, and $\pi_i(\Sp)$ of the
orthogonal, unitary, respectively symplectic groups.
Concretely, one has the following group isomorphisms:
$$\pi_i(\O)\simeq \pi_{i+8}(\O),\quad \pi_i(\U)\simeq \pi_{i+2}(\U),
\quad   \pi_{i}(\Sp)\simeq \pi_{i+8}(\Sp),$$
for all $i\ge 0$.
If we now consider the standard inclusions 
\begin{equation}\label{onun}\O_n\subset \U_n \quad {\rm and} \quad \U_n\subset \Sp_n
\end{equation}
then  the maps induced between homotopy groups, that is
$\pi_i(\O_n)\to \pi_i(\U_n)$  and $\pi_i(\U_n)\to \pi_i(\Sp_n)$
 are stable
relative to $n$ within the ``stability range".
One can see that the resulting maps
$$f_i:\pi_i(\O)\to \pi_i(\U) \quad {\rm and} \quad g_i : \pi_i(\U)\to \pi_i(\Sp),$$
are periodic in the following sense:
\begin{equation}\label{fi8}f_{i+8}=f_i \quad {\rm and} \quad g_{i+8}=g_i.\end{equation}
These facts are basic in  homotopy theory and can be proved using techniques 
described e.g.~in \cite[Ch.~1]{May}.
We   provide an alternative, more elementary
proof of Equation (\ref{fi8}) and  determine  the maps $f_i$ and $g_i$ explicitely, by using  only on the long exact homotopy sequence of the principal bundles
$\O_n\to \U_n\to \U_n/\O_n$ and $\U_n\to \Sp_n\to \Sp_n/\U_n$,
combined with the explicit knowledge of the stable
homotopy groups of $\O$, $\U$, $\Sp$, $\U/\O$, and $\Sp/\U$
(the details can be found in Section \ref{secper}, see especially Theorems
\ref{maintheo} and \ref{maitheo}, Remarks \ref{notes} and \ref{remer},
and Tables 1 and 3).
The present paper shows that  the results stated by Equation (\ref{fi8}) are just direct  consequences of 
the abovementioned differential geometric periodicity phenomenon, in the spirit of
Bott's original proof of his periodicity theorems.
Besides the inclusions given by Equation (\ref{onun}) we will also consider the following
ones, which are described in detail in Appendix \ref{apa},
Subsections \ref{1e} - \ref{8em}:
\begin{align*}
{} & \O_{2n}/\U_n \subset \G_{n}(\bC^{2n}), & {} &
\U_{2n}/\Sp_n\subset \U_{2n}, & {} &
\G_{n}(\bH^{2n}) \subset \G_{2n}(\bC^{4n}), \\
{} &  \Sp_n \subset \U_{2n},& {} &
\Sp_n/\U_n\subset \G_{n}(\bC^{2n}), & {} &
\U_n/\O_n\subset \U_n, \\
{} & \G_{n}(\bR^{2n}) \subset \G_{n}(\bC^{2n}),& {} &
\G_{n}(\bC^{2n})\subset \Sp_{2n}/\U_n,  & {} &
\U_n\subset \U_{2n}/\O_{2n},  \\
{} & \G_{n}(\bC^{2n}) \subset \G_{2n}(\bR^{4n}), & {} &
\U_n \subset \O_{2n}, & {} &
\G_{n}(\bC^{2n}) \subset \O_{4n}/\U_{2n}, \\
{} & \U_n\subset \U_{2n}/\Sp_n,   & {} &
\G_{n}(\bC^{2n}) \subset \G_{n}(\bH^{2n}). & {} & {}
\end{align*}
For each of them we will prove a periodicity result similar to those described by Equation
(\ref{onun}).
The precise statements are Corollaries \ref{cor} and \ref{coragain}.

 \vspace{0.3cm}

\noindent {\bf Acknowledgements.} We would like to thank Jost-Hinrich Eschenburg for discussions about the topics of the paper. We are also grateful to the Mathematical Institute at the University of Freiburg, especially Professor Victor Bangert, for  hospitality  while part of this work was being done. The second named author
wishes to thank the University of Regina for hosting him during a research visit
in March 2010.

\section{Bott periodicity from a geometric viewpoint}\label{sectiondoi}

In this section we  review the original (geometric) proof of Bott's periodicity theorem.
We adapt the original treatment in \cite{Bo} to our needs and therefore
change it slightly. More precisely,
we will use
ideas of Milnor \cite{Mi},
as well as the concept of centriole, which was defined by Chen and Nagano
\cite{Ch-Na1} (see also \cite{Na}, \cite{Na-Su}, and \cite{Bu}).

 \subsection{The geometry of centrioles.}
Let $P$ be a compact connected symmetric space and $o$ a point in $P$.
We say that $(P,o)$ is a {\it pointed symmetric space}.
As already mentioned in the introduction, a key role is played by the
space of all shortest geodesic segments in $P$ from $o$ to a
point in $P$ which belongs to a certain ``special" class.
It turns out that this class consists of the poles of $(P,o)$, 
(cf.~\cite{Qu} and \cite{Ma-Qu}). The notion of pole is described by the following definition.
First, for any $p\in P$ we denote by $s_p:P\to P$ the corresponding geodesic symmetry. 

\begin{definition}\label{ole}
A {\rm pole} of the pointed symmetric space $(P,o)$ is a point $p\in P$ with the property that
$s_p=s_o$ and $p\neq o$.
\end{definition}

Let $G$ be the identity component of the isometry group of $P$.
This group acts transitively on $P$. We denote by $K$ the $G$-stabilizer
of $o$ and by $K_e$ its identity component.
The following result is related to \cite[Vol.~II, Ch.~VI, Proposition 2.1 (b)]{Lo}.

\begin{lemma}\label{ele}
If $p$ is a pole of $(P,o)$, then
$k.p=p$ for all $k\in K_e$.
\end{lemma}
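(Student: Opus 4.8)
The plan is to show that the pole $p$ is fixed by every element of the identity component $K_e$, using the characterization $s_p = s_o$ together with the fact that $K_e$ is connected and acts on $P$ by isometries preserving $o$.

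First I would observe that the set $F = \{x \in P : s_x = s_o\}$ is a closed subset of $P$: indeed, the assignment $x \mapsto s_x$ is continuous (geodesic symmetries depend continuously, even smoothly, on their center), so $F = \{x : s_x = s_o\}$ is closed. By definition, $F$ consists of $o$ together with all poles of $(P,o)$. Next, for $k \in K_e$, since $k$ is an isometry of $P$ fixing $o$, it conjugates geodesic symmetries according to $k \circ s_x \circ k^{-1} = s_{k.x}$ for every $x \in P$. In particular, if $p$ is a pole, then $s_{k.p} = k \circ s_p \circ k^{-1} = k \circ s_o \circ k^{-1} = s_{k.o} = s_o$, so $k.p \in F$ as well. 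Thus $K_e$ acts on the finite-or-discrete set $F$.

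The key point is then that $F$ is discrete. This follows because two distinct elements of $F$ are poles of each other (or one is $o$ and the other a pole), and poles are isolated: a pole $p$ satisfies $s_p = s_o$, so in particular $s_p$ and $s_o$ have the same fixed point set, and one checks that $F$ is contained in the fixed point set of $s_o$ as an isolated subset — more directly, $F$ injects into the group of isometries via $x \mapsto$ (the value at $o$ of) a suitable geodesic, and is in any case a closed discrete, hence finite, set since $P$ is compact. Granting discreteness of $F$, the orbit map $K_e \to F$, $k \mapsto k.p$, is a continuous map from a connected space to a discrete space, hence constant; evaluating at $k = e$ gives $k.p = e.p = p$ for all $k \in K_e$, which is the claim.

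The main obstacle I anticipate is justifying cleanly that $F$ is discrete (equivalently, that poles are isolated). The cheapest route is the connectedness argument above, which only requires $F$ to be discrete rather than finite, and discreteness can be extracted from the local structure of geodesic symmetries near $o$: if $x$ is close to $o$ then $s_x$ is close to $s_o = s_x$ forces, via the fact that $s_o$ has $o$ as an isolated fixed point (true on any symmetric space), that $x = o$; combined with homogeneity of the situation under $K_e$ this shows every point of $F$ is isolated in $F$. Alternatively one can invoke the cited result of Loos directly. I would present the connectedness-of-$K_e$ argument as the backbone and fill in discreteness of $F$ as the one technical lemma, since everything else is formal.
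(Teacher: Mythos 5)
Your argument is correct, but it follows a genuinely different route from the paper. The paper works upstairs in the isometry group: it considers the involution $\sigma(g)=s_o g s_o$ of $G$, uses the standard fact that the identity component of $G^\sigma$ is $K_e$, observes that $s_p=s_o$ lets one rewrite $\sigma(g)=s_p g s_p$, so that the same identity component sits inside the stabilizer $G_p$; hence $K_e\subset G_p$. You instead argue downstairs in $P$: the set $F=\{x\in P: s_x=s_o\}$ is $K_e$-invariant (via $s_{k.x}=k\circ s_x\circ k^{-1}$ and $k.o=o$) and discrete, so the continuous orbit map $K_e\to F$, $k\mapsto k.p$, is constant by connectedness. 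Your approach is more elementary and self-contained (it avoids the structure theory identifying $(G^\sigma)_e$ with identity components of stabilizers), at the price of the discreteness of $F$, and that is the one place where your justification wobbles: the clause about $F$ injecting ``into the group of isometries via the value at $o$ of a suitable geodesic'' and the appeal to ``homogeneity under $K_e$'' do not do the work (note $K_e$ does not act transitively on $F$, and ${\rm Fix}(s_o)$ itself need not be discrete). The clean argument is the one you gesture at with isolated fixed points, applied at each point of $F$ rather than only at $o$: if $x,y\in F$ are distinct, then $s_x(y)=s_y(y)=y$, while in a normal ball around $x$ the symmetry $s_x$ is $-{\rm id}$ in normal coordinates and so fixes only $x$; hence $d(x,y)$ is at least the injectivity radius at $x$, and $F$ is discrete (indeed finite, by compactness). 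With that one sentence supplied, your proof is complete; the paper's proof is shorter granted the standard facts, while yours makes the mechanism (poles form a discrete $K_e$-invariant set) more transparent.
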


\begin{proof}  
The map $\sigma : G\to G$, $\sigma(g) =s_ogs_o$ is an involutive group automorphism of $G$ whose fixed point set  $G^\sigma$ has the same identity
component  $K_e$ as $K$.
Since $p$ is a pole, we have $\sigma(g)=s_pgs_p$ and the fixed point set
$G^\sigma$ has the same identity component as the stabilizer $G_p$ of $p$ in $G$.
Consequently, $K_e \subset G_p$.
\end{proof}

\begin{example}\label{anyc} {\rm Any compact connected Lie group $G$ can be equipped with a
bi-invariant metric and becomes in this way a Riemannian symmetric space
(cf.~e.g.~\cite[Section 21]{Mi}). The geodesic symmetry at $g\in G$ is the map
$s_g:G\to G$, $s_g(x)=gx^{-1}g$, $x\in G$.
An immediate consequence is a description of the poles of $G$: they are exactly those $g$ which lie in the center of $G$ and whose square is equal to the identity of $G$. We also note that the identity component of the isometry group of $G$ is $G\times G/\Delta(Z(G))$,
where $\Delta(Z(G)):=\{(z,z) \ : \ z\in Z(G)\}$. Here $G\times G$ acts on $G$ via
\begin{equation}\label{ligr}(g_1,g_2).h:=g_1hg_2^{-1} \quad g_1,g_2,h\in G
\end{equation} and the kernel of this action is equal to
$\Delta(Z(G))$. Finally, the stabilizer of the identity element $e$ of $G$ is $\Delta(G)/\Delta(Z(G))$.}
\end{example}

\begin{remark}\label{notan} {\rm Not any pointed compact symmetric space admits a pole. For example, consider the Grassmannian $\G_k(\K^{2m})$, where
$0 \le k \le 2m$
and $\K=\bR, \bC$, or $\bH$. It has a canonical structure of a Riemannian symmetric space. One can show that $\G_k(\K^{2m})$ has a pole if and only if $k=m$.
Indeed, let us first consider  an element $V$ of $\G_m(\K^{2m})$.
Then a pole of the pointed symmetric space $(\G_m(\K^{2m}), V)$ is
$V^\perp$, the orthogonal complement of $V$ in $\K^{2m}$.

\vspace{-0.2cm}

\noindent From now on, we will assume that $k\neq m$. We  take into account the general fact that  if a compact symmetric space $P$ has a pole, then there is a non-trivial Riemannian double covering $P\to P'$ (see e.g.~\cite[Proposition 2.9]{Ch-Na1} or \cite[Lemma 2.15]{Qu}).
 Now, none of the spaces $\G_k(\K^{2m})$ is a covering of another space,
 in other words, all $\G_k(\K^{2m})$ are adjoint symmetric spaces.
To prove this, we need to consider the following two situations.
If $\K=\bR$, we note that  the symmetric space $\G_k(\bR^{2m})$ has the Dynkin diagram  of type ${\mathfrak b}$, hence it has exactly one simple root with coefficient equal to 1
in the expansion of the highest root
(see \cite{He}, Table V, p.~518, Table IV, p.~532 and the table on p.~477).
On the other hand, $\G_k(\bR^{2m})$  is covered by the  Grassmannian
of all oriented $k$-subspaces in $\bR^{2m}$.
By using the theorem  of Takeuchi \cite{Ta}, the latter space is simply connected,
 and $\G_k(\bR^{2m})$ is its adjoint symmetric space.
 If $\K=\bC$ or $\K=\bH$, we note that
the symmetric space $\G_k(\K^{2m})$ has Dynkin diagram of type
$\mathfrak{bc}$; by using again \cite{Ta}, we deduce that $\G_k(\K^{2m})$ is at the same time simply connected and an adjoint symmetric space.
 }
 \end{remark}

Recall that spaces of shortest geodesic segments with prescribed endpoints in a symmetric space  are an important tool in Bott's proof of his periodicity theorem \cite{Bo}. We can identify such  spaces
with submanifolds by mapping a shortest geodesic segment to its midpoint.
We therefore have a closer look at these spaces. The objects described in the following definition are slightly more general,  in the sense that the geodesic segments are not required to be shortest (we will return to this assumption at the end of this subsection).

\begin{definition}\label{pppole} Let $p$ be a pole of $(P,o)$. The set $C_p(P,o)$ of all midpoints of geodesics
in $P$ from $o$ to $p$ is called a {\rm centrosome}.
The connected components of
a centrosome are called {\rm centrioles}.
\end{definition}

For more on these notions we refer to \cite{Ch-Na1}  and \cite{Na}.
The following result is a consequence of \cite[Proposition 2.12 (ii)]{Na}
(see also \cite[Proposition 2.16]{Qu} or \cite[Proposition 2]{Qu1}).

\begin{lemma}\label{tg} Any centriole in a compact symmetric space is a reflective,
hence totally geodesic  submanifold.  \end{lemma}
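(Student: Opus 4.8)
The implication ``reflective $\Rightarrow$ totally geodesic'' is elementary and I would dispose of it first: if $N$ is a connected component of the fixed point set of an isometric involution $\rho$ of a Riemannian manifold, then for $x\in N$ and $v\in T_xN$ one has $d\rho_x(v)=v$, so the geodesic $t\mapsto\exp_x(tv)$ is $\rho$-invariant and hence stays in $N$; thus $N$ is totally geodesic. So the whole content of the lemma is that every centriole is reflective, and the plan is to exhibit each centriole as a full connected component of the fixed point set of a single isometric involution attached to the pole.

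Fix a pole $p$ of $(P,o)$. Recall (Remark \ref{notan}, via \cite[Prop.~2.9]{Ch-Na1} or \cite[Lemma~2.15]{Qu}) that $p$ produces a fixed-point-free isometric involution $\delta$ of $P$ with $\delta(o)=p$, namely the nontrivial deck transformation of the associated double covering $P\to P/\langle\delta\rangle$; concretely $\delta(g\cdot o)=g\cdot p$, which is well defined because $K_e$ fixes $p$ by Lemma \ref{ele}. Two features of $\delta$ will be used: it commutes with every element of $G=\mathrm{Isom}_0(P)$ (it is realized by a central element of the covering group), and it commutes with $s_o$. For the second point, $s_o\delta s_o$ again centralizes $G$ and $s_o\delta s_o(o)=s_o(p)=p=\delta(o)$ (here $s_o=s_p$ since $p$ is a pole), and a $G$-centralizing isometry is determined by its value at $o$; hence $s_o\delta s_o=\delta$. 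Now put $\theta:=\delta\circ s_o$. Then $\theta^2=\delta s_o\delta s_o=\delta^2 s_o^2=\mathrm{id}$, so $\theta$ is an isometric involution of $P$, and it depends only on $o$ and $p$, not on any choice of geodesic.

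Next I would check $C_p(P,o)\subseteq\mathrm{Fix}(\theta)$. Write $P=G/K$, let $\g=\k\oplus\p$ be the Cartan decomposition attached to $\sigma(g)=s_ogs_o$, and identify $\p\cong T_oP$; the geodesics through $o$ are $t\mapsto\exp(tX)\cdot o$ with $X\in\p$, such a geodesic reaches $p$ at $t=1$ exactly when $\exp(X)\cdot o=p$, and its midpoint is $m=\exp(\tfrac{1}{2}X)\cdot o$. Using $s_o(g\cdot o)=\sigma(g)\cdot o$, $d\sigma|_{\p}=-\mathrm{id}$, the centrality of $\delta$ in $G$, and $\delta(o)=p=\exp(X)\cdot o$, one gets
\begin{align*}
\theta(m) &= \delta\bigl(s_o(\exp(\tfrac{1}{2}X)\cdot o)\bigr) = \delta\bigl(\exp(-\tfrac{1}{2}X)\cdot o\bigr)\\
&= \exp(-\tfrac{1}{2}X)\cdot\delta(o) = \exp(-\tfrac{1}{2}X)\exp(X)\cdot o = m.
\end{align*}
Since this holds for every admissible $X$, the centrosome $C_p(P,o)$ lies in $\mathrm{Fix}(\theta)$, so each centriole $N$ is contained in a single connected component of $\mathrm{Fix}(\theta)$.

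It remains to show that a centriole $N$ is in fact the whole connected component of $\mathrm{Fix}(\theta)$ through any of its points $m$, i.e.\ that $N$ is open in $\mathrm{Fix}(\theta)$; equivalently, that $T_mN$ equals the $(+1)$-eigenspace $\mathrm{Fix}(d\theta_m)$ of $T_mP$. This dimension-matching is the technical heart of the argument and the step I expect to be the main obstacle: one must identify $T_m C_p(P,o)$ — obtained by differentiating the family $\{\exp(\tfrac{1}{2}X)\cdot o:\ X\in\p,\ \exp(X)\cdot o=p\}$ together with the $K_e$-orbit directions through $m$ — and compare it, via the eigenspace decomposition of $\mathrm{ad}(X)$ acting on $\p$, with $\mathrm{Fix}(d\theta_m)$. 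This is precisely the computation carried out in \cite[Prop.~2.12(ii)]{Na} (see also \cite[Prop.~2.16]{Qu} and \cite[Prop.~2]{Qu1}), which one may instead quote directly to conclude that every centriole is reflective, hence totally geodesic.
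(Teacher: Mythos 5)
The paper does not actually prove Lemma \ref{tg}: it records it as a consequence of \cite[Proposition 2.12 (ii)]{Na} (see also \cite[Proposition 2.16]{Qu}, \cite[Proposition 2]{Qu1}). Your proposal ends in exactly the same place: the decisive step --- that a centriole is a \emph{whole} connected component of ${\rm Fix}(\theta)$, not merely a subset of one --- is the step you explicitly leave to those references. Everything you do prove (reflective $\Rightarrow$ totally geodesic, the construction $\theta=\delta\circ s_o$, and $C_p(P,o)\subset{\rm Fix}(\theta)$) is a reasonable sketch of how the cited arguments begin, but it does not yet yield the lemma, since containment of the centrosome in a fixed-point set gives neither reflectivity nor total geodesy of a centriole; the dimension/component matching is where the content of the statement lies. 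So, judged as a blind proof, there is a genuine gap at the crucial point, and judged as a route it coincides with the paper's, namely quotation of \cite{Na}, \cite{Qu}, \cite{Qu1}.

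There is also an unjustified step earlier. The well-definedness of $\delta(g.o):=g.p$, equivalently the claim that $\delta$ centralizes $G$, requires the \emph{full} isotropy group $K$ to fix $p$, whereas Lemma \ref{ele} only gives this for the identity component $K_e$. Since $K$ may be disconnected (e.g.\ for Grassmannians) and a component of $K$ a priori only permutes the finite set of poles of $(P,o)$, you need an additional argument here --- for instance extracting the centralizing property of the deck transformation from \cite[Proposition 2.9]{Ch-Na1} or \cite[Lemma 2.15]{Qu} --- rather than the parenthetical remark that $\delta$ is realized by a central element of the covering group, which is not substantiated. Note that your identity $s_o\delta s_o=\delta$, the equality $\theta(m)=m$, and the assertion that a $G$-centralizing isometry is determined by its value at $o$ all rest on this centrality, so the gap propagates through the part of the argument you do carry out.
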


We recall that a submanifold of a Riemannian manifold is called {\it reflective} if it is a connected component of the fixed point set of an isometric involution.
Reflective submanifolds of irreducible simply connected Riemannian symmetric spaces have been classified by  Leung in \cite{Le} and \cite{Le1} .
This classification in the special case when the symmetric space is a compact simple Lie group will be an important tool for us (see Appendix \ref{ligrp}).

Although the following result appears to be known (see \cite{Ch-Na1} and \cite{Na}),
we  decided to include a proof of it, for the sake of completeness.

\begin{lemma}\label{pol} Let $p$ be a pole of $(P,o)$. The centrioles of  $(P,o)$ relative to $p$ are orbits of the canonical $K_e$-action on $P$.
\end{lemma}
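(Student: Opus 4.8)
The claim is that each centriole — i.e.\ each connected component of the centrosome $C_p(P,o)$ — is exactly an orbit of the $K_e$-action on $P$. The plan is to exhibit $C_p(P,o)$ as a single $K_e$-orbit's worth of components, by relating midpoints of geodesics from $o$ to $p$ with a fixed-point set that $K_e$ acts on nicely. First I would use the standard midpoint map: a geodesic $\gamma$ from $o$ to $p$ corresponds to an initial vector $X\in T_oP$ with $\exp_o(X)=p$, and its midpoint is $\exp_o(X/2)$. Writing $P=G/K$ with $\g=\k\oplus\p$ the Cartan decomposition, these vectors $X$ form an $\mathrm{Ad}(K)$-invariant subset $M_p\subset\p$ (since $p$ is a pole, hence fixed by $K_e$ by Lemma \ref{ele}, the set of $X\in\p$ with $\exp_o(X)=p$ is $K_e$-invariant). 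The midpoint set is then $C_p(P,o)=\{\exp_o(X/2): X\in M_p\}$, and because $\exp_o$ is $K_e$-equivariant and $M_p$ is $K_e$-invariant, $C_p(P,o)$ is a union of $K_e$-orbits.

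Next I would show that a single connected component is one orbit. Here the key geometric input is that $K_e$ acts on the set of midpoints with the property that the midpoint $m=\exp_o(X/2)$ is a pole-type point: one checks $s_m s_o$ is an involution whose square is the identity, and more relevantly that $s_m$ interchanges $o$ and $p$ and fixes the midpoint set. The cleanest route: for a midpoint $m$ of a geodesic from $o$ to $p$, the point $m$ is a fixed point of the involution $s_o$ composed appropriately, so $C_p(P,o)$ sits inside a fixed-point set $F$ of an isometric involution of $P$ (this is essentially Lemma \ref{tg}). The identity component $G_F^0$ of the group of isometries of $P$ preserving $F$ acts transitively on each component of $F$; one then identifies the relevant component with $K_e\cdot m$ by showing $K_e$ already acts transitively on it — because the isotropy representation of $K_e$ at $o$ acts transitively on the relevant set of initial vectors $X\in M_p$ lying in a given component (all such geodesics from $o$ to $p$ in one component are $K_e$-congruent), and the midpoint map intertwines these actions.

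So the steps, in order: (1) parametrize $C_p(P,o)$ via initial vectors $X\in\p$ with $\exp_o X=p$, and note $K_e$-invariance of this vector set using Lemma \ref{ele}; (2) observe that $\exp_o(\tfrac12(\cdot))$ is $K_e$-equivariant, so $C_p(P,o)$ is a union of $K_e$-orbits; (3) show each $K_e$-orbit is open in $C_p(P,o)$ — equivalently, that $K_e$ acts transitively on the set of initial directions generating geodesics to $p$ within a fixed component — which forces each component to be a single orbit; (4) conclude, since $K_e$ is connected its orbits are connected, so the $K_e$-orbits are precisely the connected components, i.e.\ the centrioles.

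The main obstacle I expect is Step (3): proving that $K_e$ acts transitively on each connected component, rather than merely having open orbits. Openness of the orbits (hence that components are unions of at most the closure of one orbit) follows from a dimension count once one shows the orbit is a submanifold of the same dimension as the centriole — which in turn needs that the normal space to the $K_e$-orbit at $m$ inside $C_p(P,o)$ is trivial, a tangent-space computation using the $\mathrm{Ad}(K_e)$-module structure of $\p$. Closedness of a $K_e$-orbit is automatic ($K_e$ compact), so open $+$ closed $+$ connectedness of the component gives the result. The subtlety is setting up the tangent-space identification $T_m C_p(P,o)\cong T_m(K_e\cdot m)$ correctly; I would do this by differentiating the midpoint parametrization and comparing with the infinitesimal $K_e$-action, invoking that $p$ being a pole makes $\mathrm{Ad}(K_e)$ act on the fibre $\exp_o^{-1}(p)\cap\p$ in a way whose derivative spans the tangent directions to the fibre — essentially the content of \cite[Proposition 2.12]{Na} which we are allowed to cite.
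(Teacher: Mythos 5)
Your steps (1), (2) and (4) are the easy half of the lemma and match what the paper does: using Lemma \ref{ele} and the connectedness of $K_e$ one sees at once that the centrosome is $K_e$-invariant and that each orbit stays inside one centriole. The genuine gap is your step (3), which you yourself flag as the main obstacle but never carry out: the whole content of the lemma is precisely the claim that the orbit $K_e\cdot m$ of a midpoint $m$ is open in the centriole, i.e.\ that $T_m(K_e\cdot m)=T_mC$. You do not compute either side. Note that the fibre $\exp_o^{-1}(p)\cap\p$ need not be a smooth submanifold and the midpoint parametrization need not be a submersion onto the centrosome, so ``differentiating the midpoint parametrization'' is not a routine verification: one would have to identify tangent vectors to the centriole at $m$ with values $J\left(\tfrac12\right)$ of Jacobi fields along the geodesic from $o$ to $p$ vanishing at both endpoints, and then show that all such fields come from Killing fields generated by $\k$ (a restricted-root analysis). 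Moreover, the citation you lean on at exactly this point, \cite[Proposition 2.12]{Na}, asserts that centrioles are reflective, hence totally geodesic --- it is what the paper quotes as Lemma \ref{tg} --- and it does not give transitivity of the $K_e$-action; so the decisive step is left both unproved and uncited (and quoting the transitivity statement from Nagano or Chen--Nagano would defeat the stated purpose of giving a self-contained proof).

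The paper closes this gap by a short global argument that avoids any dimension count, and which you could adopt to complete your plan: given $x,y$ in the same centriole $C$, join them by a geodesic $\mu$ inside $C$ (possible because $C$ is totally geodesic, Lemma \ref{tg}), and consider the transvections $\tau_\mu(t)=s_{\mu(t/2)}\circ s_{\mu(0)}$. Since every point of the centrosome is the midpoint of a geodesic from $o$ to $p$, its geodesic symmetry sends $o$ to $p$ and $p$ to $o$; hence $\tau_\mu(t)$ fixes $o$, lies in $K$, and by continuity from $\tau_\mu(0)=\mathrm{id}$ lies in $K_e$, while $\tau_\mu(1)$ carries $x$ to $y$. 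This transvection trick is the missing idea in your proposal; without it (or a genuinely executed tangent-space computation) the open--closed--connected scheme does not yet constitute a proof.
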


\begin{proof} Let $C$ be a connected component of $C_p(P, o)$
and take $x\in C$. There exists a geodesic $\gamma: \bR \to P$ such that
$\gamma(0)=o,\gamma(1)=x$, and $\gamma(2)=p$.
For any $k\in K_e$, the restriction of the map $k.\gamma : \bR \to P$
to the interval $[0,2]$ is a  geodesic segment between $o$ and $p$
(see  Lemma  \ref{ele}). Thus
the point $k.\gamma(1)$ is in $C_p(p,o)$. Since $K_e$ is connected, we deduce
that $K_e.x\subset C$.

Let us now prove the converse inclusion. Take $y\in C$ and consider a geodesic
$\mu : \bR \to C$ such that $\mu(0)=x$ and $\mu(1)=y$.
By Lemma \ref{tg}, $\mu$ is a geodesic in $P$ as well.
We consider the one-parameter subgroup of transvections along $\mu$ which is given by
$\tau_\mu:\bR\to G$,
$\tau_\mu(t):=s_{\mu(t/2)}\circ s_{\mu(0)}$
(see e.g.~\cite[Lemma 6.2]{Sa}).

\noindent{\it Claim.} $\tau_\mu(t)\in K_e$, for all $t\in \bR$.

Indeed, since  $\mu(0)$ and $\mu(t/2)$ are both midpoints of geodesic segments between $o$ and $p$, we have $s_{\mu(0)}.o=p$ and $s_{\mu(t/2)}.p=o$.
Hence, $\tau_\mu(t).o=o$. We deduce that $\tau_\mu(t)\in K$.
Since $\tau_\mu(0)$ is the identity transformation of $P$, we actually have
$\tau_\mu(t)\in K_e$.

The claim along with the fact that $\mu(0)=x$ implies that $\tau_\mu(1).x= s_{\mu(1/2)}\circ s_{\mu(0)}.x=s_{\mu(1/2)}.x=y$. Thus $y\in K_e.x$.
\end{proof}

From Lemmata \ref{ele} and \ref{pol}, we see that whenever a centriole in $C_p(P,o)$
contains a midpoint of a shortest geodesic segment between $o$ and $p$, then this
centriole consists of midpoints of such shortest geodesic segments only.
Such centrioles are called {\it s-centrioles}. (For further properties of s-centrioles we refer to \cite{Qu1}.)

%\subsection{Centrioles and complex structures.} Besides the observation above, which led us to the notion of centriole, Milnor came up in \cite{Mi} with another illuminating fundamental idea, namely to use complex structures. We recall that a complex structure of Euclidean space $\bR^n$ is an element $J$ of the orthogonal group $\O_n$ with the property $J^2=-I$. We will use this notion in the more general context of a compact connected matrix Lie group $G$. A {\it complex structure} of $G$ is an element $J\in G$ with the property $J^2=-I$. Also note that $G$ carries a bi-invariant metric which turns it into a Riemannian symmetric space (cf.~e.g.~\cite[Section 21]{Mi}).

\subsection{The $\S\O$-Bott chain}\label{sochain}
We outline Milnor's description \cite[Section 24]{Mi} of this chain. 
The  chain starts with $P_0 = \S\O_{16n}$. 
We then consider the space of
all orthogonal complex structures in $\SO_{16n}$, that is,
$$\Omega_1:=\{J\in \SO_{16n} \ : \ J^2=-I\}.$$ This space has two connected components, which are both diffeomorphic to
$\S\O_{16n}/\U_{8n}$.
We pick any of these two components and denote it by  $P_1$.
For $2 \le k\le 7$ we construct the spaces $P_k\subset \S\O_{16n}$ inductively, as follows:
Assume that $P_k$ has been constructed and pick a base-point $J_k\in P_k$.
We define $P_{k+1}$ as the top-dimensional connected component of the space
$$\Omega_{k+1}:=\{J\in P_k \ : \ JJ_k=-J_kJ\}.$$ In this way we  construct $P_2,\ldots, P_7$.
Finally, we pick $J_7\in P_7$ and define $P_8$ as any of the two
connected components of the space
$$\Omega_8:=\{J\in P_7 \ : \ JJ_7=-J_7J\}.$$ (Note the latter space is diffeomorphic to the orthogonal group $\O_n$, thus it has two components that are diffeomorphic). It turns out that $P_1, \ldots, P_8$ are submanifolds of 
$\S\O_{16n}$, whose diffeomorphism types can be described as follows:
$P_0={\rm SO}_{16n}$, $P_1= \S\O_{16n}/\U_{8n}$,
$P_2= \U_{8n}/\Sp_{4n}$,  $P_3= \G_{2n}(\bH^{4n})=\Sp_{4n}/(\Sp_{2n}\times \Sp_{2n})$, $P_4= \Sp_{2n}$, $P_5= \Sp_{2n}/\U_{2n}$,  $P_6=\U_{2n}/\O_{2n}$,  $P_7= \G_{n}(\bR^{2n})=\SO_{2n}/\S(\O_n\times \O_n)$, and
$P_8= \SO_n$. 
The details can be found in \cite[Section 24]{Mi}.

For our future goals it is useful  to have an alternative description of
the $\S\O$-Bott chain. This  is presented by the following two lemmata.

\begin{lemma}\label{0,}  For any $0 \le k \le 7$, the subspace $P_k$ of $\S\O_{16n}$ is invariant under the automorphism of $\S\O_{16n}$ given by $X\mapsto -X$.
\end{lemma}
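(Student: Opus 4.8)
The plan is to induct on $k$, using the inductive construction of the $\SO$-Bott chain. The base case $k=0$ is trivial: $P_0=\SO_{16n}$ is obviously invariant under $X\mapsto -X$, since $-X\in\SO_{16n}$ whenever $X\in\SO_{16n}$ (as $16n$ is even, $\det(-X)=\det X=1$). For $k=1$, recall that $P_1$ is one of the two components of $\Omega_1=\{J\in\SO_{16n}: J^2=-I\}$; if $J^2=-I$ then $(-J)^2=-I$ as well, so $-J\in\Omega_1$. One must check that $-J$ lies in the \emph{same} component as $J$, not the other one; this should follow from the fact that $J$ and $-J$ are conjugate in $\SO_{16n}$ (both have the same eigenvalue multiplicities $\pm i$, each of multiplicity $8n$), and conjugation by an element of the connected group $\SO_{16n}$ preserves connected components of $\Omega_1$. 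Alternatively, one notes $-J = J\cdot(-I)$ and $-I$ lies in the identity component, or one simply observes that the map $J\mapsto -J$ on $\Omega_1$ is continuous and an involution, hence either preserves both components or swaps them; conjugating $J$ to $-J$ inside $\SO_{16n}$ rules out the swap.

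For the inductive step, suppose $2\le k+1\le 7$ and that $P_k$ is invariant under $X\mapsto -X$. By construction, $P_{k+1}$ is the top-dimensional component of $\Omega_{k+1}=\{J\in P_k : JJ_k=-J_kJ\}$ for a chosen base-point $J_k\in P_k$. If $J\in\Omega_{k+1}$, then $-J\in P_k$ by the inductive hypothesis, and $(-J)J_k = -JJ_k = J_kJ = -J_k(-J)$, so $-J$ anticommutes with $J_k$ as well; hence $-J\in\Omega_{k+1}$. Thus $X\mapsto -X$ maps $\Omega_{k+1}$ to itself, and being a diffeomorphism of $\Omega_{k+1}$ it permutes the connected components while preserving their dimensions. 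In particular it fixes the top-dimensional component $P_{k+1}$ as a set, provided that component is unique; if several components share the top dimension one must argue (again via conjugacy inside the relevant connected isometry group, or via an explicit path) that $-J$ lies in the same component as $J$.

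The main obstacle is precisely this component-tracking: the statement that $X\mapsto -X$ does not merely preserve $\Omega_k$ but preserves each chosen component $P_k$. I would handle this uniformly by showing that for each $k$, the point $-J$ and the point $J$ lie in the same $(K_e)$-orbit, where $K_e$ is the relevant identity-component isotropy group acting on $P_{k-1}$; by Lemma \ref{pol} these orbits are exactly the centrioles (the connected components $P_k$), so $-J$ and $J$ belong to the same one. Concretely, at each stage $-J = s\cdot J$ for a suitable transvection or isotropy element $s$ in the connected group, which one can exhibit directly from the linear-algebra description of the $J$'s as products of the anticommuting complex structures $J_1,\dots,J_{k-1}$. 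This reduces everything to the first two steps, which are the ones done above. Finally, the claim is asserted only for $0\le k\le 7$; the case $k=8$ is excluded because $P_8$ is one of the two diffeomorphic components of $\Omega_8$ and $X\mapsto -X$ may interchange them (indeed on $\O_n\cong\Omega_8$ the two components have the same dimension and negation can swap them), which is exactly why the lemma stops at $k=7$.
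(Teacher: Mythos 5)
Your first half (that $\Omega_k$ is stable under $X\mapsto -X$) is exactly the paper's first sentence and is fine. But the heart of the lemma is the component-tracking, and there your proposal stops at a promissory note: you say that $-J$ and $J$ ``lie in the same $K_e$-orbit'' and that a suitable element of the connected group ``can be exhibited directly from the linear-algebra description'' --- without exhibiting it. That existence statement is precisely the nontrivial content. The paper settles it by citing the last paragraph of p.~137 of Milnor: for every $X\in\Omega_k$ there is a path in $\Omega_k$ from $X$ to $-X$. The path is explicit, $t\mapsto X\cos t+K\sin t$, where $K$ is one further orthogonal complex structure anticommuting with $J_1,\dots,J_{k-1}$ \emph{and} with $X$; such a $K$ exists only because $16n$ is divisible by a sufficiently high power of $2$ (this is also where Remark \ref{sclaar} gets its hypothesis from). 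Your sketch never invokes this extra anticommuting structure, so the decisive step is missing, and the dimension-count fallback (``negation permutes components preserving dimension, hence fixes the unique top-dimensional one'') does not cover the case $k=1$, where the two components of $\Omega_1$ are diffeomorphic and $P_1$ is an arbitrary one of them.

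Your treatment of $k=1$ also has a soft spot. Equal eigenvalue multiplicities give conjugacy of $J$ and $-J$ in $\O_{16n}$ (or $\U_{16n}$), not in $\SO_{16n}$; and the refinement is not automatic, since for $\bR^{2m}$ with $m$ odd ($\bR^2$, $\bR^6$, \dots) the structures $J$ and $-J$ actually lie in \emph{different} components of the space of orthogonal complex structures. Any correct argument must use that $8n$ is even --- e.g.\ produce an anticommuting complex structure $K$ (so $KJK^{-1}=-J$ with $\det K=1$), which is again the same existence statement as above. The parenthetical alternative ``$-J=J\cdot(-I)$ and $-I$ lies in the identity component'' proves nothing, because the corresponding path $J g_t$ leaves $\Omega_1$. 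Finally, note a mild circularity risk in your uniform plan: identifying the components of $\Omega_k$ with $K_e$-orbits via Lemma \ref{pol} presupposes that $-J_{k-1}$ is a pole of $(P_{k-1},J_{k-1})$ and that $\Omega_k$ is the corresponding centrosome, which in the paper is obtained \emph{from} Lemma \ref{0,} (together with Lemma \ref{0<}); this can be repaired by an induction as you indicate, but even then the step $-J\in K_e\cdot J$ still requires the extra anticommuting structure, so the gap remains.
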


\begin{proof} 
First, $\Omega_k$ is obviously invariant under $X\mapsto -X$, $X\in \S\O_{16n}$.
The decisive argument is the information provided by the last paragraph on p.~137 in \cite{Mi}: for any $X\in \Omega_k$,  there exists a path in $\Omega_k$ from $X$ to $-X$.
\end{proof}

Let us now equip $\S\O_{16n}$
with the bi-invariant metric induced  by
\begin{equation}\label{metrcsos}\langle X,Y\rangle =- {\rm tr}(XY),\end{equation}
for all $X,Y$ in the Lie algebra $\o_{16n}$ of $\S\O_{16n}$.
Then $P_1, \ldots, P_8$ are totally geodesic submanifolds of
$\S\O_{16n}$ (see \cite[Lemma 24.4]{Mi}). Fix $k\in \{0,1, \ldots, 7\}$ and set $J_0:=I$.
From Example \ref{anyc} we deduce that $-J_k$ is a pole of $(\S\O_{16n},J_k)$.
 By the previous lemma, $-J_k$ lies in  $P_k$ and, since the latter space is totally geodesic in $\S\O_{16n}$, $-J_k$ is a pole of
 $(P_k, J_k)$. The following lemma follows from the Remark on p.~138 in \cite{Mi}.
 
 \begin{lemma}\label{0<} For any $k\in \{0,1,\ldots, 7\}$, the space $P_{k+1}$ is an s-centriole of $(P_k, J_k)$ relative to the
 pole $-J_k$.
 \end{lemma}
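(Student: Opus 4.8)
The plan is to verify the two defining properties of an s-centriole (Definition \ref{pppole} together with the discussion closing Subsection on centrioles), namely: (i) $P_{k+1}$ is a connected component of the centrosome $C_{-J_k}(P_k,J_k)$, i.e.~it consists of midpoints of geodesic segments in $P_k$ from $J_k$ to $-J_k$; and (ii) at least one of these geodesic segments is a \emph{shortest} one. Once (i) and (ii) are established, Lemma \ref{ele} guarantees that \emph{every} geodesic segment represented in $P_{k+1}$ is shortest, so $P_{k+1}$ is genuinely an s-centriole.

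For step (i), I would first recall that in $\S\O_{16n}$, equipped with the bi-invariant metric (\ref{metrcsos}), the geodesics emanating from $J_k$ are of the form $t\mapsto J_k\exp(tX)$ for $X\in\o_{16n}$, and that a geodesic from $J_k$ to the pole $-J_k$ reaches $-J_k$ precisely when $\exp(X)=-I$. Its midpoint $J:=J_k\exp(X/2)$ then satisfies $J^2=J_k\exp(X/2)J_k\exp(X/2)$. The key computation — this is essentially the content of the Remark on p.~138 of \cite{Mi} that the lemma invokes — is that the condition ``$t\mapsto J_k\exp(tX)$ is a geodesic from $J_k$ to $-J_k$ whose midpoint lies in $P_k$'' translates, after using that $P_k$ is totally geodesic and $-J_k\in P_k$ (Lemma \ref{0,}), into the anticommutation relation $JJ_k=-J_kJ$ characterizing $\Omega_{k+1}$. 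Conversely, any $J\in\Omega_{k+1}\subset P_k$ satisfies $(JJ_k^{-1})^2 = J J_k^{-1} J J_k^{-1} = -J J_k^{-1} J_k J_k^{-1}\cdot(\text{sign bookkeeping}) = -I$, so $JJ_k^{-1}$ is a complex structure, hence $J = J_k\exp(X/2)$ for a suitable $X$ with $\exp(X)=-I$, exhibiting $J$ as such a midpoint. Thus $\Omega_{k+1}$ \emph{is} the centrosome $C_{-J_k}(P_k,J_k)$, and since by construction $P_{k+1}$ is a connected component of $\Omega_{k+1}$, it is a centriole.

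For step (ii), I would argue that the geodesics in question can be taken to be minimizing. The point is that $-J_k$ is not merely a pole but, via the midpoint map, the component $P_{k+1}$ was selected (by Milnor) to be the \emph{top-dimensional} component of $\Omega_{k+1}$ (or one of the two top-dimensional components, at steps $1$ and $8$); by Milnor's analysis in \cite[Section 24]{Mi} of the index form, the top-dimensional centriole is exactly the one consisting of midpoints of minimal geodesics — the geodesics through points of lower-dimensional components have conjugate points before reaching $-J_k$ and so are non-minimal. Concretely I would point to the last paragraph of p.~137 and the Remark on p.~138 of \cite{Mi}, which is precisely what the lemma statement cites: there Milnor shows the geodesic segments through points of $P_{k+1}$ from $J_k$ to $-J_k$ realize the distance. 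Hence $P_{k+1}$ contains a midpoint of a shortest geodesic segment, and by the remark following Lemma \ref{pol} (combined with Lemmata \ref{ele} and \ref{pol}), $P_{k+1}$ is an s-centriole of $(P_k,J_k)$ relative to $-J_k$.

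The main obstacle I anticipate is step (ii): verifying that the \emph{top-dimensional} component is the \emph{minimal-geodesic} component is not a formal consequence of the centriole machinery but rather a computation with the second variation / index form along the relevant geodesics in each of the eight cases. Fortunately this is exactly what Milnor carries out in \cite[Section 24]{Mi}, so the honest thing is to cite it — the lemma is explicitly phrased as ``follows from the Remark on p.~138 in \cite{Mi}'' — rather than to reprove it. The only genuinely new bookkeeping on our side is matching Milnor's description (spaces of complex structures anticommuting with $J_k$) with the centriole language of Definition \ref{pppole}, which is the routine linear-algebra identity $\exp(X)=-I \Leftrightarrow (J_kJ_k^{-1}\text{-conjugate is a complex structure})$ sketched above.
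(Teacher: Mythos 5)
Your overall route is the same as the paper's: the paper offers no argument for this lemma beyond citing the Remark on p.~138 of \cite{Mi}, and deferring the substantive facts to Milnor is legitimate. However, the elaboration you add has a gap in step (i). For the converse inclusion you write $J=J_k\exp(X/2)$ with $\exp(X)=-I$ and declare $J$ ``exhibited as such a midpoint'', but this only exhibits $J$ as the midpoint of a geodesic of $\SO_{16n}$ from $J_k$ to $-J_k$; membership in the centrosome $C_{-J_k}(P_k,J_k)$ requires a geodesic \emph{of $P_k$}, and total geodesy of $P_k$ does not by itself force that particular ambient geodesic to stay in $P_k$. One must either check directly, as Milnor does, that the curve $t\mapsto J_k\exp(\pi t A)$ with $A=J_k^{-1}J$ consists of complex structures anticommuting with $J_1,\dots,J_{k-1}$, hence stays in $\Omega_k$ and therefore in the component $P_k$, or argue as the paper does for the $\U$-chain in Lemma \ref{commute}: any geodesic of $P_k$ with $\gamma(0)=J_k$ and $\gamma\left(\frac{1}{2}\right)=J$ automatically satisfies $\gamma(1)=-J_k$. (Also note that at $k=0$ the set $\Omega_1$ is cut out by $J^2=-I$, not by anticommutation with $J_0=I$.)

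The more serious problem is your justification of step (ii). The claim that the top-dimensional component is ``exactly'' the one consisting of midpoints of minimal geodesics, the lower-dimensional components corresponding to geodesics with conjugate points, is false and is not what Milnor proves. For \emph{every} $J\in\Omega_{k+1}$, in whichever component, the geodesic $t\mapsto J_k\exp(\pi t\, J_k^{-1}J)$ lies in $P_k$ and has length $\pi\sqrt{16n}={\rm dist}_{\SO_{16n}}(J_k,-J_k)$, hence is minimizing already in $\SO_{16n}$ and a fortiori in $P_k$ (this is precisely the content of Lemmas \ref{length} and \ref{equipe}); consequently every centriole of $(P_k,J_k)$ relative to $-J_k$ is an s-centriole, and the top-dimensional component is singled out only to get the diffeomorphism types needed for the iteration, not by minimality. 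Milnor's index-form estimates concern the non-minimal geodesics $J_k\exp(tX)$ whose $X$ has eigenvalues $\pm 3\pi i$ or higher, not the lower-dimensional components of $\Omega_{k+1}$. Since the lemma only needs the true half of your claim --- that $P_{k+1}$ does consist of midpoints of shortest segments, which the length computation (or Milnor's Remark) supplies --- the proof can be repaired, but as written the stated mechanism behind step (ii) is incorrect.
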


\begin{remark}\label{sclaar} {\rm 
As we will show in Proposition \ref{isotype} (b),
$P_8$ is isometric to $\S\O_n$, the latter being equipped with the standard bi-invariant metric multiplied by a certain scalar. Assume that $n$ is an even integer and pick
$J_8\in P_8$. With the method used in the proof of Lemma \ref{0,}  one can show that $-J_8$ is in $P_8$ as well
(indeed by the footnote on p.~142 in \cite{Mi}, there exists an orthogonal complex structure $J\in \SO_{16n}$ which anti-commutes with $J_1, \ldots, J_7$). As in Lemma \ref{0<},
$-J_8$ is  a pole of $(P_8, J_8)$ and,
by using  Example \ref{anyc} for $G=\S\O_n$, it is the only one.
We conclude that the $\S\O$-Bott chain can be extended and is periodic, in the sense that if $n$ is divisible by a ``large" power of $16$, then every eighth element of the chain
is isometric to  a certain special orthogonal group equipped with a bi-invariant metric.}
\end{remark}

\subsection{The $\Sp$-Bott chain}\label{spch}
This is obtained from the $\S\O$-chain  by taking $P_4$ as the
initial element.
More precisely, we replace  $n$ by $8n$ and, in this way, 
 $P_4$ is diffeomorphic to $\Sp_{16n}$. This is the first term of the $\Sp$-chain, call 
 it $\bar{P}_0$. Here is the list of all terms of the chain, described up to
 diffeomorphism:
$\bar{P}_0=\Sp_{16n},
\bar{P}_1=\Sp_{16n}/\U_{16n},
\bar{P}_2=\U_{16n}/\O_{16n},
\bar{P}_3=\G_{8n}(\bR^{16n})=\SO_{16n}/\S(\O_{8n}\times \O_{8n}),
\bar{P}_4=\SO_{8n},
\bar{P}_5=\SO_{8n}/\U_{4n},
\bar{P}_6=\U_{4n}/\Sp_{2n},
\bar{P}_7=\G_{n}(\bH^{2n})=\Sp_{2n}/\Sp_n\times \Sp_n,$ and $\bar{P}_8=\Sp_n$.
As explained in the previous subsection, these are Riemannian manifolds obtained by successive applications of the centriole construction.
The starting point is $P_0=\Sp_{16n}$ with the Riemannian metric which  is described
at the beginning of Section \ref{inclubot}:  by Proposition \ref{isotype} (a),
this metric  is the same as the submanifold metric on $P_4$, up to a scalar multiple.

\subsection{Poles and centrioles in $\U_{2q}$}\label{poled}
Let $q$ be an integer, $q\ge 1$. We equip the unitary group $\U_{2q}$ with the bi-invariant metric induced by the inner product
\begin{equation}\label{lega}\langle X, Y\rangle =-{\rm tr}(XY),\end{equation}
for all $X,Y$ in the Lie algebra $\u_{2q}$ of $\U_{2q}$.
The center of $\U_{2q}$  is
$$Z(\U_{2q})=\{zI \ : \  z\in \bC, |z|=1\}.$$ From Example \ref{anyc}, the  pointed symmetric space
$(\U_{2q}, I)$ has exactly one pole, namely the matrix $-I$.
By  Lemma \ref{pol},
the centrioles of $(\U_{2q}, I)$ are certain orbits of the conjugation action
of $\U_{2q}$ on itself, since they coincide with the orbits of the action
of $\U_{2q}/Z(\U_{2q})$.

Let us describe explicitly the s-centrioles.
We first describe the
 shortest geodesic segments  in $\U_{2q}$ between
$I$ and $-I$, that is, $\gamma:[0,1] \to \U_{2q}$ such that
$\gamma(0)=I$ and $\gamma(1)=-I$.
Any such $\gamma$ is $\U_{2q}$-conjugate to the $1$-parameter subgroup
\begin{equation}\label{gamak}\gamma_k: \ t\mapsto
\exp \left[t
\left(%
\begin{array}{ccccccc}
   \pi i I_k& 0\\
0 & -\pi i I_{2q-k}
\end{array}%
\right)\right],  t\in \bR
\end{equation}
restricted to the interval $[0,1]$, for some $0\le k \le 2q$
(see \cite[Section 23]{Mi}).
Consequently, any s-centriole is of the form $\U_{2q}.\gamma_k\left(\frac{1}{2} \right)$,
that is, the $\U_{2q}$-conjugacy class of
$$\exp \left[\frac{1}{2}
\left(%
\begin{array}{ccccccc}
   \pi i I_k& 0\\
0 & -\pi i I_{2q-k}
\end{array}%
\right)\right]
=
\left(%
\begin{array}{ccccccc}
    i I_k& 0\\
0 & - i I_{2q-k}
\end{array}%
\right).
$$
The $\U_{2q}$-stabilizer of this matrix is $\U_k\times \U_{2q-k}$, hence one
can identify the orbit with $\U_{2q}/\U_k\times \U_{2q-k}$, which is just the Grassmannian $\G_k(\bC^{2q})$.  If we equip the orbit with the submanifold Riemannian metric, then the (transitive) conjugation action of $\U_{2q}$ on it is isometric, in other words, the  metric is
 $\U_{2q}$-invariant.
 Note that up to a scalar there is a unique such metric on $\G_k(\bC^{2q})$ and it makes this space into a symmetric space.

We will be especially interested in the centriole corresponding to $k=q$,
which we call the {\it top-dimensional s-centriole}.
Concretely, this is the $\U_{2q}$-conjugacy class of 
the matrix
 \begin{equation}\label{aq}
A_q:=
 \left(%
\begin{array}{ccccccc}
    i I_q& 0\\
0 &  -i I_{q}
\end{array}%
\right)
\end{equation}
and it is isometric to the Grassmannian $\G_q(\bC^{2q})$ equipped with a canonical
symmetric space metric.

Finally, note that if instead of $I$ the base point is an arbitrary element $A$ of $\U_{2q}$, then
the only pole of $(\U_{2q},A)$ is the matrix $-A$.
The corresponding centrioles are
$A(\U_{2q}.\gamma_k\left(\frac{1}{2}\right))$, that is,
$A$-left translates in $\U_{2q}$ of the
conjugacy classes described above. As before, they are all s-centrioles.

\begin{remark}\label{remra} {\rm The top-dimensional s-centriole of
$(\U_{2q}, A)$ relative to $-A$ is invariant under the automorphism of $\U_{2q}$
given by $X\mapsto -X$. The reason is that the matrix $-A_q$ is $\U_{2q}$-conjugate 
to $A_q$.}
\end{remark}

%orbits of the following action of $\U_{2q}$ onto itself:
%$$(X,Y)\mapsto XYA^{-1}X^{-1}A.$$

\subsection{Poles and centrioles in $\G_q(\bC^{2q})$}\label{poleun} %The Grassmannian $\G_q(\bC^{2q})$ is  the set of all $q$-dimensional complex vector subspaces of $\bC^{2q}$. We regard it as a submanifold of $\U_{2q}$, i.e. as the centriole of $(\U_{2q}, I)$ relative to $-I$ (see Section \ref{poled}). Concretely, it is the $\U_{2q}$-conjugacy class of the matrix  $$ A_q:= \left(% \begin{array}{ccccccc}    i I_q& 0\\ 0 &  -i I_{q} \end{array}%\right).$$ The embedding $\G_q(\bC^{2q})\subset \U_{2q}$ can be described as follows: to any matrix $A$ in the $\U_{2q}$-conjugacy class of $A_q$ corresponds its $i$-eigenspace, call it $V$. Observe that to the matrix $-A$ corresponds the orthogonal complement $V^\perp$ of $V$ with respect to the standard Hermitian metric of $\bC^{2q}$.
We regard the Grassmannian  $\G_q(\bC^{2q})$ as the top-dimensional  s-centriole
of $(\U_{2q}, I)$ relative to $-I$, that is,  the conjugacy class in $\U_{2q}$  of
 the matrix
 $
A_q$ described by Equation (\ref{aq}).
Note that,
by Remark \ref{remra}, if $A$ is in $\G_q(\bC^{2q})$, then $-A$ is in
$\G_q(\bC^{2q})$, too.

%Moreover, by Lemma \ref{tg}, the embedding $\G_q(\bC^{2q})\subset \U_{2q}$ is  totally geodesic.

%Let $\sigma$ be the automorphism of our Grassmannian  given by $$\sigma(V)=V^\perp,$$ for any $V\in \G_q(\bC^{2q})$. Here $V^\perp$ denotes the orthogonal complement of $V$ in $\bC^{2q}$ relative to the canonical Hermitian product on  $\bC^{2q}$. We have $\sigma\circ\sigma ={\rm id}$. The orbit space $\G_q(\bC^{2q})/\{{\rm id}, \sigma\}$ is the adjoint space of  $\G_q(\bC^{2q})$.

\begin{lemma}\label{gonepole}
If $A\in \G_q(\bC^{2q})$, then the pointed symmetric space
$(\G_q(\bC^{2q}), A)$ has only one pole, which is $-A$.
\end{lemma}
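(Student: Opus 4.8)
The plan is to work inside $\U_{2q}$, using the description of $\G_q(\bC^{2q})$ as the $\U_{2q}$-conjugacy class of the matrix $A_q$ of Equation (\ref{aq}). Since the conjugation action of $\U_{2q}$ on $\G_q(\bC^{2q})$ is transitive and isometric, it suffices to treat the special base-point $A=A_q$; the general case follows by transport of structure. So the core task is to show that $(\G_q(\bC^{2q}), A_q)$ has exactly one pole, and that this pole is $-A_q$.

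First I would check that $-A_q$ is a pole. By Remark \ref{remra} the matrix $-A_q$ is $\U_{2q}$-conjugate to $A_q$, hence lies in $\G_q(\bC^{2q})$, and $-A_q\neq A_q$. To see $s_{-A_q}=s_{A_q}$ as maps of $\G_q(\bC^{2q})$, recall that the geodesic symmetry of $\U_{2q}$ at a point $B$ is $x\mapsto Bx^{-1}B$ (Example \ref{anyc}), and the centriole $\G_q(\bC^{2q})$ is a totally geodesic (reflective) submanifold by Lemma \ref{tg}, so the geodesic symmetry of $\G_q(\bC^{2q})$ at $B$ is the restriction of $x\mapsto Bx^{-1}B$. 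Now for $B=\pm A_q$ the two maps $x\mapsto A_q x^{-1}A_q$ and $x\mapsto (-A_q)x^{-1}(-A_q)$ literally agree, so $s_{-A_q}=s_{A_q}$ on all of $\U_{2q}$, in particular on the submanifold. Hence $-A_q$ is a pole of $(\G_q(\bC^{2q}),A_q)$.

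The real content is uniqueness. Suppose $B\in \G_q(\bC^{2q})$ is a pole of $(\G_q(\bC^{2q}),A_q)$, so $s_B=s_{A_q}$ on $\G_q(\bC^{2q})$ and $B\neq A_q$. I want to show $B=-A_q$. Here I would invoke Lemma \ref{ele}: the pole $B$ is fixed by the identity component of the stabilizer of $A_q$ in the isometry group of $\G_q(\bC^{2q})$; since the conjugation action of $\U_{2q}$ on $\G_q(\bC^{2q})$ is transitive and isometric, that stabilizer contains the conjugation action of $\U_q\times\U_q$ (the $\U_{2q}$-stabilizer of $A_q$). Thus $B$ commutes with every element of $\U_q\times\U_q$, which forces $B$ to be a scalar on each of the two $q$-dimensional eigenspaces of $A_q$; writing $B=\mathrm{diag}(\alpha I_q,\beta I_q)$ with $|\alpha|=|\beta|=1$, the condition $B^2\in Z(\U_{2q})$ — equivalently, that $s_B$ differs from $s_{A_q}$ by the (trivial, on the adjoint space) central action, which is what the pole condition unwinds to — gives $\alpha^2=\beta^2$. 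That leaves $B\in\{A_q,-A_q,\ \mathrm{diag}(\alpha I_q,-\alpha I_q)\text{-type}\}$; but every such $B$ is $\U_{2q}$-conjugate either to $A_q$ (which is excluded, since a pole must satisfy $s_B=s_{A_q}$ but $B\neq A_q$, and one checks no conjugate other than $\pm A_q$ of a diagonal form can have $s_B=s_{A_q}$) or to $-A_q$. Tracking the conjugation carefully pins down $B=-A_q$.

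The main obstacle I anticipate is precisely this last bookkeeping: translating the abstract pole condition $s_B=s_{A_q}$ into the concrete matrix condition on $B=\mathrm{diag}(\alpha I_q,\beta I_q)$, and then ruling out the spurious solutions. The cleanest route is probably to push the computation back into $\U_{2q}$: a pole $p$ of the symmetric space $(\G_q(\bC^{2q}),A_q)$ should, via the totally geodesic embedding and the relation between geodesic symmetries of $\U_{2q}$ and of the centriole, correspond to the statement that $A_q^{-1}p$ (equivalently $\bar A_q p$) squares into $Z(\U_{2q})$ while acting trivially by conjugation on $\G_q(\bC^{2q})$ — i.e.\ $\bar A_q p\in Z(\U_{2q})$ — forcing $p=zA_q$ with $z^2=1$, whence $p=\pm A_q$ and $p\neq A_q$ gives $p=-A_q$. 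I would spend the bulk of the write-up making that identification precise, citing Lemma \ref{tg} for the total-geodesy and Example \ref{anyc} for the symmetry formula; everything else is formal.

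\hfill{$\square$}\\
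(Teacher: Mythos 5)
Your existence half is fine and follows the paper: $-A_q$ lies in the conjugacy class, the ambient symmetries $s_{A_q}$ and $s_{-A_q}$ of $\U_{2q}$ coincide, and total geodesy (Lemma \ref{tg}) transfers this to the Grassmannian. The problem is the uniqueness half. Your opening move is good: by Lemma \ref{ele} a pole $B$ of $(\G_q(\bC^{2q}),A_q)$ is fixed by $K_e$, which contains the image of the block-diagonal $\U_q\times\U_q$, and a Schur-type argument then forces $B=\mathrm{diag}(\alpha I_q,\beta I_q)$ with $|\alpha|=|\beta|=1$. But from there the argument is not actually carried out. The claim that the pole condition ``unwinds to'' $B^2\in Z(\U_{2q})$ (equivalently $\alpha^2=\beta^2$), or in your ``cleanest route'' to $\overline{A}_qB\in Z(\U_{2q})$, is asserted rather than proved, and it is not immediate: $s_B=s_{A_q}$ is only known as an identity of isometries of the Grassmannian, i.e.\ $Bx^{-1}B=A_qx^{-1}A_q$ for $x$ ranging over the conjugacy class of $A_q$, not over all of $\U_{2q}$, so one cannot simply cancel and conclude centrality. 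The decisive elimination of the remaining diagonal candidates is then left as ``tracking the conjugation carefully pins down $B=-A_q$'', which is exactly the content that is missing; as written, the proof does not close.

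The gap is easy to fill, and in a way that makes the whole $\alpha^2=\beta^2$ detour unnecessary: you never use that $B$ lies in $\G_q(\bC^{2q})$ itself, i.e.\ that $B$ is $\U_{2q}$-conjugate to $A_q$ and hence has spectrum $\{i,-i\}$ with each eigenvalue of multiplicity $q$. Combined with $B=\mathrm{diag}(\alpha I_q,\beta I_q)$ this forces $(\alpha,\beta)=(i,-i)$ or $(-i,i)$, so $B=\pm A_q$, and $B\neq A_q$ gives $B=-A_q$; no ``spurious'' candidates ever arise. With that line added, your argument is correct and genuinely different from the paper's: there, uniqueness is proved by covering theory — the Cartan map of $\G_q(\bC^{2q})$ is a Riemannian double covering onto the adjoint space, whose fundamental group is $\bZ_2$ by the Dynkin-diagram count for type $\mathfrak c$ together with Takeuchi's theorem, and every pole lies in the fibre $\pi^{-1}(\pi(A))$. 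Your linear-algebra route is more elementary and self-contained for this particular space; the paper's argument avoids matrix computations and applies to simply connected symmetric spaces where no such explicit model is available.
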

\begin{proof}
First, observe that the geodesic symmetries $s_A$ and $s_{-A}$ of $\U_{2q}$ are
identically equal (see Example \ref{anyc}).
By Lemma \ref{tg}, $\G_q(\bC^{2q})$ is a totally geodesic submanifold of
$\U_{2q}$. Hence, $-A$ is a pole of $(\G_q(\bC^{2q}), A)$.  We claim that the pointed symmetric space $(\G_q(\bC^{2q}), A)$ has at most one pole.
Indeed, let $\pi$ be the Cartan map of $\G_q(\bC^{2q})$, i.e.~the map
that assigns to each point its geodesic symmetry. It is known that this is
a Riemannian covering onto its image, the latter being a compact symmetric space.
Observe that the fundamental group of the adjoint space of $\G_q(\bC^{2q})$ is $\bZ_2$. We prove this by using the same kind of  argument as in the second half of Remark \ref{notan}: the Dynkin diagram of the symmetric space $\G_q(\bC^{2q})$ is of
type ${\mathfrak c}$, hence there is exactly one simple root with coefficient equal to
1 in the expansion of the highest root
(see \cite{He}, Table V, p.~518, Table IV, p.~532 and the table on p.~477); we use
again the theorem of Takeuchi \cite{Ta}.
Since
 $\G_q(\bC^{2q})$ is simply connected and we have $\pi(A)=\pi(-A)$ we deduce that
 $\pi$ is a double covering. Finally, we take into account that any pole of
$(\G_q(\bC^{2q}), A)$
is in the pre-image $\pi^{-1}(\pi(A))$.
 \end{proof}

We note that this lemma is related to \cite[Proposition 2.23 (i)]{Na2}.

\begin{remark} {\rm Recall that, by definition, $\G_q(\bC^{2q})$ is the space of all
$q$-dimensional complex vector subspaces of $\bC^{2q}$. The lemma above implies readily that if $V$ is such a vector space, then the pointed symmetric space $(\G_q(\bC^{2q}), V)$ has only one pole, which is $V^\perp$, the orthogonal complement of $V$ in
$\bC^{2q}$ relative to the usual Hermitian inner product.  }
\end{remark}

As a next step, we look at s-centrioles in $\G_q(\bC^{2q})$. Since $\G_q(\bC^{2q})$ is an irreducible and
simply connected symmetric space, there is a {\it unique}  
s-centriole of $(\G_q(\bC^{2q}), A_q)$ relative to the pole $-A_q$
(see Theorem 1.2 and the subsequent remark in \cite{Ma-Qu}).
To describe it,
we first find  a shortest geodesic segment from $A_q$ to
 $-A_q$ in $\G_q(\bC^{2q})$. 
  Let us consider the curve $\gamma: [0, 1]\to \G_q(\bC^{2q})\subset \U_{2q}$,
$$
\gamma(t)=
\exp\left[t
\left(%
\begin{array}{ccccccc}
   0& \frac{\pi  i}{2} I_q \\
 \frac{\pi i}{2} I_{q} & 0
\end{array}%
\right)\right] .A_q
=
\left(%
\begin{array}{ccccccc}
   \cos (\frac{\pi t}{2})I_q& i\sin (\frac{\pi t}{2})I_q \\
  i\sin (\frac{\pi t}{2}) I_q & \cos (\frac{\pi t}{2})I_q
\end{array}%
\right) .A_q,
$$
where the dot indicates the conjugation action. Observe that
$\gamma(0)= A_q$ and $\gamma(1)=-A_q$.
We claim that $\gamma$ is a shortest geodesic segment between $A_q$ and $-A_q$
in $\G_q(\bC^{2q})$.
Indeed, for any $t\in [0,1]$ the matrix $\gamma'(t)$ is $\U_{2q}$-conjugate with
the Lie bracket of the matrices
$$
\left(%
\begin{array}{ccccccc}
   0& \frac{\pi  i}{2} I_q \\
 \frac{\pi i}{2} I_{q} & 0
\end{array}%
\right)
$$
and $A_q$, which is equal to
$$
\left(%
\begin{array}{ccccccc}
   0&{\pi  i} I_q \\
 {\pi i} I_{q} & 0
\end{array}%
\right).
$$
Thus the length of $\gamma$ relative to the bi-invariant metric on $\U_{2q}$ given by Equation
(\ref{lega}) is equal to $\pi\sqrt{2q}$, which means that $\gamma$ is a shortest
path in $\U_{2q}$ between $A_q$ and $-A_q$ (see \cite[p.~127]{Mi} or Lemma \ref{length} below). Since the length of the vector $\gamma'(t)$ is independent of $t$,
$\gamma$ is a geodesic segment. Its midpoint is
\begin{equation}\label{unupedoi}\gamma\left(\frac{1}{2}\right)
=
\left(%
\begin{array}{ccccccc}
   0&  I_q \\
 - I_{q} & 0
\end{array}%
\right).
\end{equation}
In view of Lemma \ref{pol}, the centriole we are interested in is the orbit of $\gamma(\frac{1}{2})$
under the $K_e$-action. Since $K_e=(\U_q\times\U_q)/Z(\U_{2q})$, this is the same as the orbit of $\gamma(\frac{1}{2})$ under conjugation by $\U_q\times\U_q\subset
\U_{2q}$. One can easily see that this orbit consists of all matrices of the form
$$\left(%
\begin{array}{ccccccc}
   0&  -C^{-1} \\
 C & 0
\end{array}%
\right)$$
where  $C$ is in $\U_q$. 
Multiplication from the left by the matrix given by Equation (\ref{unupedoi})
induces an isometry between the latter orbit and the subspace
of $\U_{2q}$ formed by all matrices
 $$\left(%
\begin{array}{ccccccc}
   C&  0 \\
 0 & C^{-1}
\end{array}%
\right),$$
with $C\in \U_q$.

We deduce that if we equip the s-centriole of $(\G_q(\bC^{2q}), A_q)$
relative to $-A_q$ with the submanifold metric,  then it becomes isometric to $\U_q$, where the latter is endowed with the
bi-invariant metric induced by
\begin{equation}\label{xyz}\langle X,Y\rangle =-2{\rm  tr(}XY),
\end{equation}
$X,Y \in \u_q$.
Moreover, if instead of $A_q$ the base point is an arbitrary element $A$ of $\G_q(\bC^{2q})$, then
the only pole of $(\G_q(\bC^{2q}),A)$ is the matrix $-A$.
The corresponding centriole is obtained from the previous one by
conjugation with $B$, where $B\in \U_{2q}$ satisfies
$A=BA_qB^{-1}$. Thus this centriole has the same isometry type as the
previous one.

\begin{remark}\label{minus}{\rm  We saw that there is a natural isometric identification between the centriole
of $(\G_q(\bC^{2q}), A)$ relative to $-A$ and $\U_q$. One can also see from the previous considerations that this centriole is invariant under the isometry
$X\mapsto -X$, $X\in \U_{2q}$, and the isometry induced on $\U_q$ is
$X'\mapsto -X'$, $X'\in \U_q$. }
\end{remark}

\subsection{The $\U$-Bott chain}\label{ubott} The following chain of inclusions results from the previous two subsections.
We start with $\tilde{P}_0:=\U_{2q}$, equipped with the bi-invariant Riemannian metric
defined by Equation (\ref{lega}).
The top-dimensional s-centriole of $(\tilde{P}_0, I)$ relative to $-I$ is denoted by $\tilde{P}_1$.
Pick $J_1\in \tilde{P}_1$. (The reason why the elements of $\tilde{P}_1$ are denoted by $J$
is explained in Appendix \ref{apa}, particularly Definition \ref{wat} and Equation
(\ref{was}).) By Remark \ref{remra},  $-J_1$ is in $\tilde{P}_1$, too.
We denote by $\tilde{P}_2$ the s-centriole of $(\tilde{P}_1, J_1)$ relative to the pole
$-J_1$. We have
$$\tilde{P}_0 \supset \tilde{P}_1 \supset \tilde{P}_2.$$
The elements of the chain are described by the following isometries:
$$\tilde{P}_0\simeq \U_{2q}, \ \tilde{P}_1\simeq \G_q(\bC^{2q}), \
\tilde{P}_2 \simeq \U_q,$$
where $\G_q(\bC^{2q})$ carries the (symmetric space) metric induced via its embedding in
$\U_{2q}$ and    $\U_q$ is endowed with the  metric described by Equation (\ref{xyz}).

We now take $q=8n$ and repeat the construction  above three more times.
By always choosing the top-dimensional centriole, we ensure that all our spaces
are invariant under the map $\U_{16n}\to \U_{16n}$, $X\mapsto -X$
(see Remarks \ref{remra} and \ref{minus} above).
We proceed as follows:

First we pick $J_2 \in \tilde{P}_2$ as a base point.
Then $-J_2$ is a pole of $(\tilde{P}_2,J_2)$. Indeed, we know that
 the geodesic symmetries $s_{J_2}$ and
$s_{-J_2}$ of $\U_{16n}$ are equal (see Example \ref{anyc}) and $\tilde{P}_2$ is a totally geodesic
submanifold of $\U_{16n}$.

After that, we consider the top-dimensional s-centriole
of $(\tilde{P}_2, J_2)$ relative to $-J_2$ and denote it by $\tilde{P}_3$.
As before, we have the identification
 $$\tilde{P}_3\simeq \G_{4n}(\bC^{8n}).$$

 In the same way, we construct  $\tilde{P}_4, \ldots, \tilde{P}_8$,
 by picking $J_{k-1}$ in $\tilde{P}_{k-1}$ and defining
 $\tilde{P}_k$ as the top-dimensional centriole of $(\tilde{P}_{k-1},J_{k-1})$ relative to
 $-J_{k-1}$, for all $k=4,\ldots, 8$. We have the identifications:
 $$\tilde{P}_5\simeq \G_{2n}(\bC^{4n}), \
\tilde{P}_6\simeq \U_{2n}, \
\tilde{P}_7\simeq \G_n(\bC^{2n}), \
\tilde{P}_8\simeq \U_n,$$
where each $\tilde{P}_k$ carries the submanifold metric.
Similarly to Equation (\ref{xyz}), one can see that the Riemannian
metric induced on $\U_n$ via the diffeomorphism $\tilde{P}_8\simeq \U_n$
coincides with the bi-invariant metric on $\U_n$ induced by
\begin{equation}\label{xir}\langle X,Y\rangle =-16{\rm tr}(XY),\end{equation}
$X,Y\in \u_n$.

In this way we have constructed the $\U$-Bott chain, which  is
$\tilde{P}_0\supset \tilde{P}_1 \supset \ldots \supset \tilde{P}_8$.

\subsection{Bott's periodicity theorems}\label{proofbo}
Bott's original proof (see \cite{Bo})  uses the space of paths between two points in a Riemannian manifold.
\begin{definition}
If $M$ is a Riemannian manifold and $p,q$ are two points in $M$, we denote by $\Omega(M;p,q)$ the space of
piecewise smooth paths $\gamma : [0,1]\to M$ with $\gamma(0)=p$ and
$\gamma(1)=q$.
\end{definition}
The space $\Omega(M;p,q)$ has a topology which is induced by a certain canonical metric
(the details can be found for instance in \cite[Section 17]{Mi}).

Let $(P,o)$ be again a pointed compact symmetric space, $p$
a pole of it, and $Q\subset P$ one of the corresponding s-centrioles.
Recall that $Q$ consists of midpoints of geodesics in $P$ from
$o$ to $p$. We have a continuous injection
\begin{equation}\label{starj} j: Q\to \Omega(P;o,p)\end{equation}
that assigns to $q\in Q$ the unique shortest geodesic segment $[0,1]\to M$ from $o$ to $p$
whose midpoint is $q$.
 This
 induces a map
$$j_*:\pi_i(Q)\to \pi_i(\Omega(P; o,p)) =\pi_{i+1}(P)$$
between homotopy groups.   Bott's proof \cite{Bo} relies on the fact that this map is an isomorphism for all $i> 0$ that are smaller than a certain number which can be calculated explicitly in concrete situations, including all the situations we have
described in Subsections \ref{sochain}, \ref{spch}, and \ref{ubott}. The main tool is Morse theory, see also Milnor's book \cite{Mi} (for a different approach we address to
\cite{Mit}).

We now apply the result above for the elements of the $\S\O$-chain,
see Subsection \ref{sochain}. For all $i=1,2,\ldots$ sufficiently smaller than $n$, we obtain
$$\pi_i(\SO_{n})=\pi_i(\tP_8)
\simeq \pi_{i+1}(\tP_7) \simeq \ldots \simeq \pi_{i+7}(\tP_1)
\simeq \pi_{i+8}(\tP_0) =\pi_{i+8}(\SO_{16n}).$$
This yields the following isomorphism between stable homotopy groups:
$$\pi_k(\O)\simeq \pi_{k+8}(\O),$$
for all $k=0,1,2,\ldots$.
This is Bott's periodicity theorem for the orthogonal group.
Similarly, for the  unitary and symplectic groups, one has
$$\pi_k(\U)\simeq \pi_{k+2}(\U) \quad  {\rm and} \quad  \pi_k(\Sp)\simeq \pi_{k+8}(\Sp)$$
for all $k=0,1,2, \ldots$.

\section{Inclusions between Bott chains}\label{inclubot}

In this section we link the three Bott chains
 constructed above.
The following  lemmata are  key ingredients that make this process possible.
We recall that for any $q\ge 1$ the Lie group $\U_{2q}$ carries the bi-invariant Riemann metric described by Equation (\ref{lega}).
We regard $\S\O_{2q}$ as a Lie subgroup of $\U_{2q}$ and endow it with the
submanifold metric (note that for $q=8n$ this is the same as the metric
described by Equation (\ref{metrcsos})).
For  $r\ge 1$ we also consider the symplectic group $\Sp_r$, which is defined as the space of all $\bH$-linear automorphisms of $\bH^n$ that preserve the norm of a vector.
As explained in Subsection \ref{5e},  this group has a canonical embedding  into
$\U_{2r}$. More precisely, $\Sp_r$  can be  identified  with the subgroup of $\U_{2r}$ that consists of all matrices of the form
$$
\left(
\begin{array}{cccccc}
A & -\overline{B}  \\
B & \overline{A}\end{array}
\right)
$$
which are in $\U_{2r}$, where $A$ and $B$ are $r\times r$ matrices with complex entries
(see \cite[Ch.~I, Section 1.11]{Br-tD}).
Yet another canonical embedding, which we also need here, is the one of  $\U_r$  into $\Sp_r$, see Subsection \ref{1em}. Concretely, $\U_r$
can be considered as the subgroup of $\Sp_r$ consisting of all matrices which are of the above form with $B=0$ and $A\in \U_r$.

For future use we also mention that  $\Sp_r$ lies in $\U_{2r}$
and  $\U_{r}$ lies in $\Sp_r$   as fixed point sets of certain involutive group automorphisms.
More precisely, let us consider the element
$$
K_r:=
\left(
\begin{array}{cccccc}
 0 & I_r  \\
-I_r & 0 \end{array}
\right)
$$
of $\U_{2r}$ and the  group automorphism of 
$\U_{2r}$ given by $X\mapsto K_r \overline{X}K_r^{-1}$,
where $\overline{X}$ is the complex conjugate of $X$:
the automorphism is involutive and its fixed point set is just $\Sp_r$. 
In the same vein,  let us consider the element
$$
A_r:=
\left(
\begin{array}{cccccc}
 iI_{r} & 0  \\
0 & -iI_{r} \end{array}
\right)
$$
of $\Sp_{r}$ and the corresponding (inner) automorphism
  of $ \Sp_{r}$,
$\bt(X):= A_r X A_r^{-1}$: 
this automorphism is involutive as well and its fixed point set  is equal to $\U_{r}$
(note that $A_r$ has also been used in Subsections \ref{poled} and \ref{poleun} and is also relevant in Subsection  \ref{7em}).

Let us now consider the inner product on $\u_{2r}$ given by
$$\langle X, Y\rangle = -\frac{1}{2}{\rm tr(}XY),\quad X, Y\in \u_{2r}.$$ 
Note that the bi-invariant Riemannian metric induced on
$\U_{2r}$ is different from the one defined by
Equation (\ref{lega}). However, we are exclusively interested in the subspace metrics 
 on  $\Sp_r$ and $\U_r$. On the last space,  the induced metric  is bi-invariant and satisfies
$$\langle X,Y\rangle =-{\rm tr(}XY),$$ for all $X, Y\in \u_{r},$
i.e.~this metric is the one given by Equation (\ref{lega}).

\begin{lemma}\label{length} Relative to the metrics defined above, we have:
\begin{align*}
{}& {\rm dist}_{\SO_{2q}}(I, -I)={\rm dist}_{\U_{2q}}(I,-I)=\pi\sqrt{2q}, \\
{}& {\rm dist}_{\U_r}(I, -I)={\rm dist}_{\Sp_r}(I,-I)=\pi\sqrt{r}.
\end{align*}
\end{lemma}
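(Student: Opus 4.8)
The plan is to compute each of the four distances directly, using the fact that on a compact Lie group with a bi-invariant metric, the distance between the identity $I$ and a central element $c$ with $c^2 = I$ is realized by a geodesic of the form $t \mapsto \exp(tX)$ with $\exp(X) = c$, and the minimal such length is $\min\{\|X\| : \exp(X) = c\}$ (this is the content of the reference to \cite[p.~127]{Mi}). So the real work is an eigenvalue bookkeeping exercise for each group, keeping careful track of the normalizing constant in the inner product.

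First I would treat $\U_{2q}$ with the metric $\langle X, Y\rangle = -{\rm tr}(XY)$ from Equation (\ref{lega}). An element $X \in \u_{2q}$ with $\exp(X) = -I$ must, after conjugation, be diagonal with purely imaginary entries $i\pi m_1, \ldots, i\pi m_{2q}$ where each $m_j$ is an odd integer. Then $\langle X, X\rangle = -{\rm tr}(X^2) = \sum_j \pi^2 m_j^2 \ge 2q \pi^2$, with equality exactly when all $m_j = \pm 1$; hence ${\rm dist}_{\U_{2q}}(I,-I) = \pi\sqrt{2q}$, the minimum being attained e.g.\ along $\gamma_q$ from Equation (\ref{gamak}) (a geodesic whose length one checks is indeed $\pi\sqrt{2q}$, since each $\gamma_k$ has $\gamma_k'(0)$ of squared norm $\sum \pi^2 = 2q\pi^2$). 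Next, $\SO_{2q} \subset \U_{2q}$ carries the submanifold metric, which agrees with $-{\rm tr}(XY)$ on $\o_{2q}$. Here $X \in \o_{2q}$ with $\exp(X) = -I$ is conjugate (in $\SO_{2q}$) to a block-diagonal matrix of $2\times2$ rotation blocks $\begin{pmatrix} 0 & -\pi a_l \\ \pi a_l & 0\end{pmatrix}$ with each $a_l$ odd; such a matrix has $-{\rm tr}(X^2) = \sum_l 2\pi^2 a_l^2 \ge q \cdot 2\pi^2 = 2q\pi^2$ (there are $q$ blocks), again with equality iff all $a_l = \pm 1$. Thus ${\rm dist}_{\SO_{2q}}(I,-I) = \pi\sqrt{2q}$ as well, and since $\SO_{2q}$ is totally geodesic in $\U_{2q}$ the two distances literally coincide along the same minimizing geodesic.

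For the second line I would use the auxiliary inner product $\langle X, Y\rangle = -\frac{1}{2}{\rm tr}(XY)$ on $\u_{2r}$ introduced just before the lemma, whose restriction to $\u_r \subset \sp_r \subset \u_{2r}$ is $-{\rm tr}(XY)$ (as the paper notes) and whose restriction to $\sp_r$ is the relevant submanifold metric. For $\U_r$: an $X \in \u_r$ with $\exp(X) = -I$ is conjugate to $i\pi\,{\rm diag}(m_1,\ldots,m_r)$ with $m_j$ odd, so $-{\rm tr}(X^2) = \sum \pi^2 m_j^2 \ge r\pi^2$, giving ${\rm dist}_{\U_r}(I,-I) = \pi\sqrt{r}$. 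For $\Sp_r$: realizing $\Sp_r \subset \U_{2r}$ as in the block form displayed in the text, an element $X$ of $\sp_r$ mapping to $-I$ under $\exp$ is $\U_{2r}$-conjugate to ${\rm diag}(i\pi m_1,\ldots,i\pi m_r, -i\pi m_1,\ldots,-i\pi m_r)$ with $m_j$ odd (the eigenvalues of a quaternionic-linear map come in conjugate pairs), whence $-\frac12{\rm tr}(X^2) = \frac12 \cdot 2\sum_j \pi^2 m_j^2 = \sum_j \pi^2 m_j^2 \ge r\pi^2$, so ${\rm dist}_{\Sp_r}(I,-I) = \pi\sqrt{r}$; once more $\U_r$ is totally geodesic in $\Sp_r$ so the two agree along a common minimizer.

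The main obstacle — really the only subtle point — is not any single computation but getting the normalization constants consistent: the $\SO$/$\U$ line uses $-{\rm tr}(XY)$, while the $\U$/$\Sp$ line must be read off the $-\frac12{\rm tr}(XY)$ metric on $\u_{2r}$, and one has to verify that this restricts correctly to the stated metric on $\U_r$ and to the intended submanifold metric on $\Sp_r$. The factor of $2$ coming from the conjugate-pair structure of symplectic (and orthogonal) eigenvalues exactly cancels the $\frac12$ in the auxiliary inner product, which is what makes $\pi\sqrt r$ come out clean; I would make sure to spell that cancellation out. Beyond that, one should remark that in each case $\exp$ of the exhibited vector does equal $-I$ and that the exhibited curve is length-minimizing — both of which follow from the standard fact (Milnor, \cite[p.~127]{Mi}) that on a compact group with bi-invariant metric $d(I,c)^2 = \min\{\|X\|^2 : \exp X = c\}$ and that no closed geodesic of smaller length can join $I$ to $c$.
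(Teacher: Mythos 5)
Your argument is correct, and all four eigenvalue computations (including the normalization bookkeeping for $-\tfrac12\mathrm{tr}$ on $\u_{2r}$ and the conjugate-pair structure of $\sp_r$-eigenvalues) check out; but it is organized differently from the paper's proof. The paper does not redo the minimization in each group: for the first line it simply quotes Milnor's computations (Sections 23 and 24 of \cite{Mi}) of a shortest geodesic in $\U_{2q}$, respectively an explicit block-rotation geodesic in $\SO_{2q}$, both of length $\pi\sqrt{2q}$; and for the second line its only idea is that the geodesic $t\mapsto\exp\bigl(t\,\mathrm{diag}(\pi i I_r,-\pi i I_r)\bigr)$ is already shortest from $I$ to $-I$ in the ambient $\U_{2r}$ and happens to lie entirely in $\U_r\subset\Sp_r$, hence is automatically shortest in both submanifold metrics --- so no intrinsic analysis of $\sp_r$ (and no appeal to the quaternionic pairing of eigenvalues) is needed at all. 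Your route instead treats each of $\U_{2q}$, $\SO_{2q}$, $\U_r$, $\Sp_r$ as a compact group with bi-invariant metric and minimizes $\|X\|$ over $\{X:\exp X=-I\}$ directly; this is more self-contained (it reproves rather than cites Milnor) and identifies all minimizers, at the cost of the extra structural input about $\sp_r$, whereas the paper's restriction trick is shorter and transfers the $\U_{2r}$ computation to $\U_r$ and $\Sp_r$ in one stroke. Your closing remarks about $\SO_{2q}$ and $\U_r$ being totally geodesic are not needed for the statement, since you have already computed each intrinsic distance separately.
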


\begin{proof} The length of a shortest geodesic segment in $\U_{2q}$ between $I$ and $-I$ has been calculated in \cite[Section 23]{Mi}.
It is equal to $\pi\sqrt{2q}$. By \cite[Section 24]{Mi}, a shortest geodesic segment in
$\SO_{2q}$ from $I$ to $-I$ is
$$[0,1]\to \SO_{2q}, \
t\mapsto \exp\left[ t\pi
\left(
\begin{array}{cccccc}
0 &1  & \ldots  & 0 & 0  \\
-1 & 0 &\ldots  & 0 & 0 \\
& & \ddots & & \\
 0&0&\ldots & 0 & 1\\
 0 & 0 & \ldots & -1 & 0
\end{array}
\right)
\right].
$$
Its length is also equal to  $\pi\sqrt{2q}$.

To justify the second equation in the lemma, we just note that 
$$[0,1]\to \U_{2r}, \ t\mapsto
\exp\left[ t
\left(
\begin{array}{cccccc}
\pi iI_r & 0  \\
0 & -\pi iI_r
\end{array}
\right)
\right]
$$
is 
a shortest geodesic segment in
$\U_{2r}$ from $I$ to $-I$.
The image of this geodesic lies entirely in $\U_r \subset \Sp_r$ and is consequently shortest in both $\U_r$ and $\Sp_{r}$. Its length can be calculated as before, by using
\cite[Section 23]{Mi}.
\end{proof}

The next lemma concerns the $\S\O$-Bott chain, which has been constructed
in Subsection \ref{sochain}. The result can be found in \cite[p.~137]{Mi}.
Since it plays an important role in our development, we state it separately.

\begin{lemma}\label{equipe} If we equip each $P_k$, $k=1,2,\ldots, 7$ with the submanifold metric,
then we have
$${\rm dist}_{\SO_{16n}}(I,-I)=
{\rm dist}_{P_1}(J_1,-J_1)=\ldots=
{\rm dist}_{P_7}(J_7,-J_7).$$
\end{lemma}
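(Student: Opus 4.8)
The plan is to prove that \emph{every} term of the asserted chain equals ${\rm dist}_{\SO_{16n}}(I,-I)=\pi\sqrt{16n}$, the value on the right coming from Lemma \ref{length} with $2q=16n$; the claimed string of equalities is then immediate. Fix $k\in\{0,1,\ldots,7\}$ and set $J_0:=I$. The lower bound ${\rm dist}_{P_k}(J_k,-J_k)\ge\pi\sqrt{16n}$ is elementary: by Lemma \ref{0,} the point $-J_k$ lies in $P_k$ together with $J_k$, and since $P_k$ carries the submanifold metric of $\SO_{16n}$ and is totally geodesic there, any path in $P_k$ joining $J_k$ to $-J_k$ has the same length as a path in $\SO_{16n}$, so ${\rm dist}_{P_k}(J_k,-J_k)\ge{\rm dist}_{\SO_{16n}}(J_k,-J_k)$. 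Left translation by $J_k^{-1}$ is an isometry of $\SO_{16n}$ for the bi-invariant metric (\ref{metrcsos}) carrying $J_k$ to $I$ and $-J_k$ to $J_k^{-1}(-J_k)=-I$, so ${\rm dist}_{\SO_{16n}}(J_k,-J_k)={\rm dist}_{\SO_{16n}}(I,-I)=\pi\sqrt{16n}$ by Lemma \ref{length}.

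For the reverse inequality one exhibits inside each $P_k$ a minimal geodesic of $\SO_{16n}$ from $J_k$ to $-J_k$; this is the argument on \cite[p.~137]{Mi}, which I would recall as follows. For $k=0$ there is nothing to prove. For $1\le k\le 7$ the structures $J_1,\ldots,J_k$ pairwise anticommute, and a direct computation shows that if $J$ is \emph{any} orthogonal complex structure on $\bR^{16n}$ commuting with $J_1,\ldots,J_{k-1}$ and anticommuting with $J_k$, then the curve $t\mapsto J_k\exp(t\pi J)$ consists of orthogonal complex structures (because $J$ anticommutes with $J_k$), anticommutes with each of $J_1,\ldots,J_{k-1}$ for every $t$ (because $J$ commutes with them while $J_k$ anticommutes with them), starts at $J_k$ and ends at $J_k\exp(\pi J)=-J_k$. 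Thus this curve satisfies at every $t$ all the defining relations of $\Omega_k,\Omega_{k-1},\ldots,\Omega_1$, and since it starts in $P_k$, which is a connected component of $\Omega_k$ (with $P_{k-1}$ a component of $\Omega_{k-1}$, and so on down to $\Omega_1$), it remains in $P_k$ throughout. After left translation by $J_k^{-1}$ it becomes $t\mapsto\exp(t\pi J)$, one of the standard minimal geodesics of $\SO_{16n}$ from $I$ to $-I$; its length for the metric (\ref{metrcsos}) is $\|\pi J\|=\big(-\pi^2{\rm tr}(J^2)\big)^{1/2}=\big(\pi^2{\rm tr}(I_{16n})\big)^{1/2}=\pi\sqrt{16n}$, a value independent of $k$. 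Hence ${\rm dist}_{P_k}(J_k,-J_k)\le\pi\sqrt{16n}$, so ${\rm dist}_{P_k}(J_k,-J_k)=\pi\sqrt{16n}$ for all $k=0,1,\ldots,7$, which is exactly the chain of equalities in the statement.

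The only nontrivial input is the existence, for each $k$, of an orthogonal complex structure $J$ on $\bR^{16n}$ commuting with $J_1,\ldots,J_{k-1}$ and anticommuting with $J_k$ — this is the algebraic heart of Milnor's construction of the $\SO$-Bott chain (and precisely where one needs $n$ divisible by a sufficiently high power of $2$), and rather than reproducing the Clifford-module bookkeeping I would simply invoke \cite[Section~24]{Mi}. Everything else is the elementary length computation above, and the structural reason the distances do not shrink along the chain is that all the $P_k$ lie inside the \emph{same} group $\SO_{16n}$: the relevant minimal geodesics are always of the form ``$\pi$ times an orthogonal complex structure,'' whose length $\pi\sqrt{16n}$ does not depend on $k$.
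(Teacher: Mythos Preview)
Your proof is correct and follows essentially the same route as the paper, which simply cites Milnor's remark on p.~138 of \cite{Mi}; you have written out the details of that argument (the lower bound from total geodesy, and the explicit geodesic $t\mapsto J_k\exp(t\pi J)$ with $J$ an orthogonal complex structure commuting with $J_1,\ldots,J_{k-1}$ and anticommuting with $J_k$, whose existence is guaranteed by the Clifford-algebra input in \cite[Section~24]{Mi}).
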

This result can also be deduced from \cite{Qu-Su}.
Relevant to this context is 
  \cite[Remark 3.2 b)]{Na-Su}, too.

We are now ready to construct the inclusions between the three Bott chains.

\subsection{Including $P_k$ into  $\tilde{P}_k$}\label{proo}
We start by recalling   that  $P_1$ is one of the two s-centrioles
of $(\SO_{16n},I)$ relative to the pole $-I$ (see  Subsection \ref{sochain}).
  Also recall  that $\tilde{P}_1$ is the top-dimensional s-centriole
of $(\U_{16n},I)$ relative to the pole $-I$ (see Subsection \ref{ubott}).
By Lemma \ref{length}, ${P}_1$ is contained in one of the s-centrioles
 of $(\U_{16n},I)$ relative to $-I$, call it $\tilde{P}'_1$.

 \noindent{\it Claim.} $\tilde{P}'_1=\tilde{P}_1$, i.e.~$P_1\subset \tilde{P}_1$.

 \noindent %To prove the claim, we first recall that $\tP_2$ is isometric to  $\U_{8r}$. Thus the minimal centrioles are isometric to $\G_k(\bC^{8r})$, for $0\le k \le 4r$, and $\tP_3$ is isometric to $\G_{4r}(\bC^{8r})$.
 Both $J_1$ and $-J_1$ are in $\tP_1$, thus also in $\tilde{P}_1'$.
 The geodesic symmetries $s_{J_1}$ and $s_{-J_1}$ of $\U_{16n}$ are equal.
 Since $\tilde{P}_1'$ is a totally geodesic submanifold of $\U_{16n}$,
 the restrictions of the two geodesic symmetries to $\tilde{P}'_1$ are equal, too.
 Therefore, $-J_1$ is a pole of the pointed symmetric space $(\tilde{P}'_1,J_1)$. On the other hand, $\tilde{P}'_1$ is isometric to one of the symmetric spaces
 $\G_k(\bC^{16n})$, where
 $0\le k \le 16n$ (see Subsection \ref{poled}).
 It is known that amongst these Grassmannians there is just one which admits a pole
 relative to a given base point,
 namely the one corresponding to $k=8n$ (see Remark \ref{notan}).  This
 finishes the proof of the claim.

Note that the following diagram is commutative:
\begin{equation*}\label{ppseq}
\vcenter{\xymatrix{
\tP_1\ar[d]^{\cap} \ar[r]^{{\jmath}_1  \ \ \ \ \ \  }&
\Omega({P}_0; I, -I)\ar[d]^{\cap} &
\\
\tilde{P}_1\ar[r]^{\tilde{\jmath}_1  \ \ \ \ \ \ } &
\Omega(\tilde{P}_0; I, -I) &\\
}}
\end{equation*}
where the horizontal arrows are inclusion maps and the vertical arrows are given by
Equation (\ref{starj}).

Recall that $P_2$ is an s-centriole of
 $(\tP_1,J_1)$ relative to $-J_1$.
  %Because $P_1$ is contained in $\tP_1$ we deduce that $-J_1$ is in $\tP_1$.
 %Thus the assumption concerning $J_1$ we have made in
 %Section \ref{complexstr} is satisfied and we can proceed to construct
 %$\tP_2$.
 By Lemmata \ref{length} and \ref{equipe},
 any shortest geodesic segment in $\tP_1$ which joins $J_1$ and
 $-J_1$  is also shortest  in $\tilde{P}_1$.
 Since $\tilde{P}_2$ is the unique   s-centriole of $(\tilde{P}_1, J_1)$ relative to
 $-J_1$   (see Subsection \ref{poleun}), we have
\begin{equation}\label{p2i}\tP_2\subset \tilde{P}_2.\end{equation}
Again, we have a commutative diagram, which is:
 \begin{equation*}
\vcenter{\xymatrix{
\tP_2\ar[d]^{\cap} \ar[r]^{\jmath_2  \ \ \ \ \ \  }&
\Omega(\tP_1; J_1, -J_1)\ar[d]^{\cap} &
\\
\tilde{P}_2\ar[r]^{\tilde{\jmath}_2  \ \ \ \ \ \ } &
\Omega(\tilde{P}_1; J_1, -J_1) &\\
}}
\end{equation*}

 In the same way we prove that we have the inclusions
\begin{equation}\label{pki}\tP_k\subset \tilde{P}_k\end{equation}
 for all $k=3, \ldots, 8$.

 \subsection{The inclusion $\tP_k \subset \tilde{P}_k$ as fixed points of the complex conjugation}\label{fixedpoints}
We will use the following notations.

\noindent{\bf Notations.} Let $A$ be a topological space.
If $a$ is an element of $A$,
then $A_a$ denotes the connected component of $A$ which contains $a$.
If $\sigma$ is a map from  $A$ to $A$ then $A^\sigma:=\{x\in A  \ : \ \sigma(x)=x\}$.

The main tool we will use in this subsection is the following lemma.

\begin{lemma}\label{lettp}
Let $(\tilde{P}, o)$ be a compact connected pointed symmetric space,
$p$ a pole of $(\tilde{P},o)$ and $\gamma_0 : [0,1]\to \tilde{P}$ a geodesic segment which is shortest between  ${\gamma}_0(0)=o$ and ${\gamma}_0(1)=p$.
Set $j_0:=\gamma_0\left(\frac{1}{2}\right)$ and denote by ${\tilde{Q}}$ the centriole of
$(\tilde{P}, o)$ relative to $p$ which contains $j_0$ (see Definition \ref{pppole}). Let also  $\sigma$  be
an isometry of $\tilde{P}$. Assume that $\sigma(o)=o$, $\sigma(p)=p$,  and
set  $\tP:=(\tilde{P}^\sigma)_o$. Also assume that the trace of $\gamma_0$
is contained in $\tP$.
Then:

(a) $p$ is a pole of $(P,o)$,

(b) $\tilde{Q}$ is $\sigma$-invariant,

(c) $(\tilde{Q}^\sigma)_{j_0}=(C_p(P,o))_{j_0}$.
\end{lemma}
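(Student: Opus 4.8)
The plan is to exploit the fact that centrioles are $K_e$-orbits (Lemma~\ref{pol}) on both $\tilde{P}$ and $P$, and to track how the involution $\sigma$ interacts with the relevant isometry groups. I write $\tilde{G}$ for the identity component of the isometry group of $\tilde{P}$, $\tilde{K}$ for the stabilizer of $o$, and $\tilde{K}_e$ for its identity component; similarly $G$, $K$, $K_e$ for $P=(\tilde{P}^\sigma)_o$. Since $\sigma$ fixes $o$, it induces an involution on $\tilde{G}$ (by $g\mapsto \sigma g\sigma^{-1}$) preserving $\tilde{K}$, and $G$, $K_e$ arise from the fixed-point data in the usual way; in particular every element of $K_e$ extends to an element of $\tilde{K}_e$ commuting with $\sigma$.

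\begin{proof}
\textbf{(a)} Since $\gamma_0$ has its trace in $\tP$ and is a geodesic of $\tilde{P}$, and $\tP$ is totally geodesic in $\tilde{P}$ (being a connected component of the fixed point set of the isometry $\sigma$), $\gamma_0$ is a geodesic of $\tP$ joining $o$ to $p$; in particular $p\in \tP$. The geodesic symmetry $s_p^{\tilde{P}}$ equals $s_o^{\tilde{P}}$ because $p$ is a pole of $(\tilde{P},o)$. Restricting these two isometries of $\tilde{P}$ to the totally geodesic submanifold $\tP$, and noting both preserve $\tP$ and restrict to its geodesic symmetries at $p$ and $o$ respectively, we get $s_p^{P}=s_o^{P}$. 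Since $p\ne o$, $p$ is a pole of $(P,o)$.

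\textbf{(b)} By Lemma~\ref{pol}, $\tilde{Q}=\tilde{K}_e.j_0$. I claim $\sigma(\tilde{K}_e.j_0)=\tilde{K}_e.j_0$. Indeed $\sigma(j_0)=\sigma(\gamma_0(\tfrac12))=\gamma_0(\tfrac12)=j_0$ because the trace of $\gamma_0$ lies in $\tP=\mathrm{Fix}(\sigma)^{\circ}$. For $k\in\tilde{K}_e$ we have $\sigma(k.j_0)=(\sigma k\sigma^{-1}).\sigma(j_0)=(\sigma k\sigma^{-1}).j_0$, and $\sigma k\sigma^{-1}\in \tilde{K}_e$ since conjugation by $\sigma$ is a continuous automorphism of $\tilde{G}$ fixing $o$. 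Hence $\sigma$ permutes $\tilde{Q}$, i.e.\ $\tilde{Q}$ is $\sigma$-invariant.

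\textbf{(c)} First, the inclusion ``$\supseteq$'': by part~(a), $p$ is a pole of $(P,o)$, and $j_0=\gamma_0(\tfrac12)\in C_p(P,o)$; by Lemma~\ref{pol} applied to $P$, the centriole of $(P,o)$ through $j_0$ equals $K_e.j_0$. Since every $k\in K_e$ extends to an element $\tilde{k}\in\tilde{K}_e$ with $\sigma\tilde{k}=\tilde{k}\sigma$, we get $K_e.j_0\subseteq \tilde{Q}\cap \mathrm{Fix}(\sigma)$; as $K_e$ is connected and $j_0$ is fixed by $\sigma$, this shows $(C_p(P,o))_{j_0}=K_e.j_0\subseteq(\tilde{Q}^\sigma)_{j_0}$. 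For ``$\subseteq$'', take $y$ in the connected component of $\tilde{Q}^\sigma$ containing $j_0$. Since $\tilde{Q}$ is a totally geodesic submanifold of $\tilde{P}$ (Lemma~\ref{tg}) and $\sigma$ restricts to an isometric involution of $\tilde{Q}$, the fixed set $\tilde{Q}^\sigma$ is totally geodesic in $\tilde{Q}$, hence in $\tilde{P}$; connecting $j_0$ to $y$ by a geodesic $\mu$ of $(\tilde{Q}^\sigma)_{j_0}$, this $\mu$ is a geodesic of $\tilde{P}$ whose trace lies in $\mathrm{Fix}(\sigma)$, hence in $\tP$. As in the proof of Lemma~\ref{pol}, the transvection $\tau_\mu(1)=s_{\mu(1/2)}\circ s_{\mu(0)}$ carries $j_0$ to $y$; it lies in $K_e$ because $\mu(0),\mu(1/2)\in\tP$ and both are midpoints of shortest geodesics of $\tilde{P}$ from $o$ to $p$ lying in $\tP$ (here I use that $\tilde{Q}$ is the centriole containing $j_0$, so every point of $\tilde{Q}$—in particular $\mu(1/2)$—is a midpoint of a geodesic from $o$ to $p$, and $\mu(1/2)\in\mathrm{Fix}(\sigma)\subseteq\tP$ makes that geodesic lie in $\tP$ by part~(a) plus total geodesy). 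Therefore $y\in K_e.j_0=(C_p(P,o))_{j_0}$, completing the proof.
\end{proof}

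The main obstacle is the reverse inclusion in (c): one must verify that a point of $\tilde{Q}^{\sigma}$ near $j_0$ is actually the midpoint of a geodesic \emph{in $P$} from $o$ to $p$, not merely a $\sigma$-fixed point of the ambient centriole. The key observation that unlocks this is that $\tilde{Q}^{\sigma}$ is totally geodesic, so one can travel inside it along a geodesic of $\tilde{P}$ whose trace is forced into $\mathrm{Fix}(\sigma)=\tP$, and then reuse the transvection argument from Lemma~\ref{pol} verbatim; the extension property ``$K_e\hookrightarrow\tilde{K}_e$ commuting with $\sigma$'' handles the easy inclusion. One should double-check the claim that each point of the centriole $\tilde{Q}$ is a midpoint of a geodesic from $o$ to $p$ (this is the definition of centrosome together with Lemma~\ref{pol}), and that ``shortest'' is preserved—but since the statement only involves centrioles and not s-centrioles, one does not actually need shortness for parts (a)--(c), only for the later application.
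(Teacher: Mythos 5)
Your route through Lemma \ref{pol} (transvections and $K_e$-orbits) is genuinely different from the paper's, which for the crucial reverse inclusion in (c) argues instead that a shortest geodesic $\gamma$ of $\tilde{P}$ from $o$ to $p$ through $j\in\tilde{Q}^\sigma$ must satisfy $\sigma\circ\gamma=\gamma$, by uniqueness of the minimizer from $o$ to $j$ — this is where the hypothesis that $\gamma_0$ is shortest (hence $\tilde{Q}$ is an s-centriole) is used. But as written your argument has a genuine flaw exactly at that crux: the parenthetical claim that the geodesic of $\tilde{P}$ from $o$ to $p$ whose midpoint is $\mu(t/2)$ ``lies in $P$ by part (a) plus total geodesy'' is a non sequitur. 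Knowing that $p$ is a pole of $(P,o)$ and that $P$ is totally geodesic does not force a geodesic of $\tilde{P}$ joining two points of $P$ to stay inside $P$; establishing precisely that containment is the content of the paper's uniqueness argument. (Also, $\mathrm{Fix}(\sigma)\subseteq P$ is backwards: $P$ is one component of $\mathrm{Fix}(\sigma)$; you get $\mu(t/2)\in P$ only because the trace of $\mu$ is connected, lies in $\mathrm{Fix}(\sigma)$, and contains $j_0\in P$.) The good news is that your transvection route does not actually need that false step: since $\mu(t/2)\in P\cap C_p(\tilde{P},o)$, the ambient symmetry $s^{\tilde{P}}_{\mu(t/2)}$ interchanges $o$ and $p$, and because $\mu(t/2)\in P$ it restricts to the geodesic symmetry of $P$ there (the same restriction fact you used in (a)); likewise $s^{P}_{j_0}(o)=p$ since $\gamma_0$ lies in $P$. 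Hence $\tau_\mu(t)\in K_e$ for all $t$ and $y=\tau_\mu(1).j_0\in K_e.j_0=(C_p(P,o))_{j_0}$ by Lemma \ref{pol} applied to $(P,o)$ — note you need this for every $t\in[0,1]$, not just $t=1$, to land in the identity component. Repaired this way, your argument is correct and in fact proves the lemma for arbitrary centrioles, without the shortness hypothesis, which the paper's proof does use.

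A second, smaller issue is the ``easy'' inclusion: you base it on the assertion that every element of $K_e$ extends to an element of $\tilde{K}_e$ commuting with $\sigma$. That statement is true, but it is not a formality (it amounts to identifying $\mathrm{Isom}_0(P)$ with the transvection group of $P$ and extending $\sigma$-invariant Killing fields from $P$ to $\tilde{P}$), and you give no argument for it. It is also unnecessary: since $P$ is totally geodesic, every geodesic of $P$ from $o$ to $p$ is a geodesic of $\tilde{P}$, so $C_p(P,o)\subseteq C_p(\tilde{P},o)\cap\tilde{P}^\sigma$ directly, and taking the connected component through $j_0$ gives $(C_p(P,o))_{j_0}\subseteq(\tilde{Q}^\sigma)_{j_0}$ in two lines, which is how the paper does it. Parts (a) and (b) are fine; (a) is the paper's argument, and (b) via conjugation of $\tilde{K}_e$ by $\sigma$ is a valid alternative to the paper's direct observation that $\sigma$ permutes geodesics from $o$ to $p$ and fixes $j_0$.
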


\begin{proof} (a) Since $p$ is a pole of $(\tilde{P}, o)$, the geodesic
reflections $s^{\tilde{P}}_o$ and $s^{\tilde{P}}_p$ are equal.
But $P$ is a totally geodesic submanifold of $\tilde{P}$, hence
the geodesic
reflections $s^P_o=s^{\tilde{P}}_o|_P$ and $s^P_p=s^{\tilde{P}}_p|_P$ are equal
as well.

(b) Take $x\in C_p(\tilde{P},o)$. Then there exists a geodesic segment
$\gamma:[0,1]\to \tilde{P}$ with $\gamma(0)=o$, $\gamma(1)=p$, and
$\gamma\left(\frac{1}{2}\right)=x$. The path $\sigma\circ \gamma :[0,1]\to \tilde{P}$
is also a geodesic segment. It joins $\sigma\circ\gamma(0)=o$ with
$\sigma\circ\gamma(1)=p$. Thus, its midpoint $\sigma(x)$  lies in
$C_p(\tilde{P},o)$ as well. We have shown that $\sigma$ leaves
$C_p(\tilde{P},o)$ invariant and induces  a homeomorphism of
it. Consequently, $\sigma$ maps $P= C_p(\tilde{P},o)_{j_0}$
onto a connected component of $C_p(\tilde{P},o)$. This must be $C_p(\tilde{P},o)_{j_0}$,
because $\sigma(j_0)=j_0$.

(c) Since $\tP$ is a totally geodesic submanifold of $\tilde{P}$,
we deduce that $C_p(P,o)\subset C_p(\tilde{P},o)\cap \tilde{P}^\sigma$,
hence $C_p(P,o)_{j_0}\subset C_p(\tilde{P},o)_{j_0}\cap \tilde{P}^\sigma=
\tilde{Q}^\sigma$. We have shown that
$C_p(P,o)_{j_0}\subset (\tQ^\sigma)_{j_0}.$

Let us now prove the opposite inclusion. Take $j$ an arbitrary element of
${\tQ}^\sigma$.
There exists ${\gamma} : [0,1]\to \tilde{P}$ a geodesic segment with
${\gamma}(0)=o$, ${\gamma}(1)=p$, and ${\gamma}\left(\frac{1}{2}\right)=j$.
Since $\tilde{Q}$ is an s-centriole, we can assume that ${\gamma}$ is shortest between $o$ and $p$.
This implies that the restriction of ${\gamma}$ to the interval $\left[0,\frac{1}{2}\right]$
is a shortest geodesic segment between $o$ and $j$;
moreover, it is the {\it unique} shortest geodesic  segment $\left[0,\frac{1}{2}\right]
\to \tilde{P}$ between $o$ and $j$ (cf. e.g. \cite[Corollary 2.111]{Ga-Hu-La}).
On the other hand, the curve $\sigma \circ \gamma : \left[0,\frac{1}{2}\right]
\to \tilde{P}$ is a shortest geodesic segment with the properties
$$\sigma\circ{\gamma}(0)=\sigma(o)=o \quad {\rm and} \quad
\sigma \circ {\gamma} \left(\frac{1}{2}\right)
= \sigma(j)=j.$$
Consequently, we have $\sigma \circ {\gamma} ={\gamma}$, and therefore
the trace of ${\gamma}$ is contained in $P$. This implies that $j\in C_p({P},o)$.
We have shown that  ${\tQ}^\sigma\subset C_p(P,o)$.
This implies readily the desired conclusion. \end{proof}

Let us denote by $\tau$ the (isometric) group automorphism of $\U_{16n}$ given by complex conjugation.
That is,
$\tau: \U_{16n} \to \U_{16n}$, $$ \tau(X):=\overline{X}, \quad X \in \U_{16n}.$$
Note that the fixed point set of $\tau$ is $\O_{16n}$.

By collecting results we have proved in this subsection and the previous one, we can now state
the following theorem.

\begin{theorem}\label{fmain} 
For any $k\in \{0,1,\ldots, 8\}$, the space $\tilde{P}_k$ is $\tau$-invariant and we have  \begin{equation}\label{tPk}\tP_k=(\tilde{P}_k^\tau)_{J_k}.\end{equation}
The 
following diagram is commutative:
 \begin{equation}\label{exactse}
\vcenter{\xymatrix{
\tP_0\ar[d]^{\cap} &
\tP_1\ar[d]^{\cap}
\ar[l]_{  \supset  } &
\tP_2
\ar[l]_{\supset} \ar[d]^{\cap}  & \ar[l]_{\supset}\cdots&
\tP_8\ar[l]_{\supset}\ar[d]^{\cap} &
\\
\tilde{P}_0 &
\tilde{P}_1
\ar[l]_{  \supset  } &
\tilde{P}_2
\ar[l]_{\supset}  & \ar[l]_{\supset}\cdots&
\tilde{P}_8\ar[l]_{\supset} &\\
}}
\end{equation}
where the two horizontal components are the $\SO$- and the $\U$-Bott chains,
and the vertical arrows  are the  inclusions $P_k\subset \tilde{P}_k$,
$k\in \{0,1,\ldots, 8\}$, induced by Equation (\ref{tPk}).
The following diagram is also commutative
 \begin{equation}\label{exact}
\vcenter{\xymatrix{
\tP_{\ell+1}\ar[d]^{\cap} \ar[r]^{\jmath_{\ell+1}  \ \ \ \ \ \  }&
\Omega(\tP_\ell; J_\ell, -J_\ell)\ar[d]^{\cap} &
\\
\tilde{P}_{\ell+1}\ar[r]^{\tilde{\jmath}_{\ell+1}  \ \ \ \ \ \ } &
\Omega(\tilde{P}_\ell; J_\ell, -J_\ell) &\\
}}
\end{equation}
where  the maps $\jmath_{\ell+1}$ and $\tilde{\jmath}_{\ell+1}$ are the
canonical inclusions given by Equation (\ref{starj}),  for all $\ell\in \{0,1,\ldots, 7\}$.
\end{theorem}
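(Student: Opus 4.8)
The plan is to prove Theorem \ref{fmain} by induction on $k$, using Lemma \ref{lettp} as the engine at each step and assembling the commutative squares along the way. The base case $k=0$ is trivial: $\tilde{P}_0 = \U_{16n}$ is $\tau$-invariant because $\tau$ is a group automorphism, $\overline{I}=I$, and $(\U_{16n}^\tau)_I = \O_{16n} = P_0$ since $\O_{16n}$ is connected. (One should note that $\tau$ fixes $J_k$ for the relevant base points; this is arranged because the base points $J_k$ can be taken to be real matrices, compatibly with the explicit normal forms in Subsection \ref{poled} — e.g. $A_q$, $K_r$ and the block matrices $\gamma_k(1/2)$ all have real or purely imaginary entries that $\tau$ permutes within $\tilde{P}_k$; this is precisely what Remarks \ref{remra} and \ref{minus} and the discussion in Subsection \ref{ubott} provide.)

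For the inductive step, suppose $\tilde{P}_\ell$ is $\tau$-invariant with $P_\ell = (\tilde{P}_\ell^\tau)_{J_\ell}$, and that $-J_\ell$ is a pole of $(\tilde{P}_\ell, J_\ell)$. I would apply Lemma \ref{lettp} with $\tilde{P} = \tilde{P}_\ell$, $o = J_\ell$, $p = -J_\ell$, $\sigma = \tau|_{\tilde{P}_\ell}$, and $\gamma_0$ a shortest geodesic from $J_\ell$ to $-J_\ell$ whose trace lies in $P_\ell$ (such a $\gamma_0$ exists by Lemma \ref{length} together with Lemma \ref{equipe}, which guarantee that a shortest geodesic in $P_\ell$ between $J_\ell$ and $-J_\ell$ remains shortest in $\tilde{P}_\ell$; this is exactly the argument already carried out in Subsection \ref{proo} for the inclusions $P_k \subset \tilde{P}_k$). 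Part (a) of the lemma gives that $-J_\ell$ is a pole of $(P_\ell, J_\ell)$. Part (b) gives that the top-dimensional s-centriole $\tilde{P}_{\ell+1}$ (the centriole containing $J_{\ell+1} = \gamma_0(1/2)$) is $\tau$-invariant. Part (c) gives $(\tilde{P}_{\ell+1}^\tau)_{J_{\ell+1}} = (C_{-J_\ell}(P_\ell, J_\ell))_{J_{\ell+1}} = P_{\ell+1}$, since $P_{\ell+1}$ is by construction (Subsection \ref{sochain}) the connected component of that centrosome containing the chosen midpoint. To close the induction I then need $-J_{\ell+1}$ to be a pole of $(\tilde{P}_{\ell+1}, J_{\ell+1})$: this follows because $\tilde{P}_{\ell+1}$ is totally geodesic in $\tilde{P}_\ell$ (Lemma \ref{tg}) and $J_{\ell+1}, -J_{\ell+1}$ both lie in $\tilde{P}_{\ell+1}$ (Remarks \ref{remra}, \ref{minus}) with equal geodesic symmetries in $\tilde{P}_\ell$, exactly as in the Claim in Subsection \ref{proo}.

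With Equation (\ref{tPk}) established for all $k$, the commutativity of diagram (\ref{exactse}) is immediate: the horizontal maps are the inclusions $P_{k+1} \subset P_k$ and $\tilde{P}_{k+1} \subset \tilde{P}_k$ coming from the respective Bott-chain constructions, the vertical maps are the inclusions $P_k = (\tilde{P}_k^\tau)_{J_k} \hookrightarrow \tilde{P}_k$, and all four maps in each square are literal inclusions of subsets of $\U_{16n}$, so the square commutes on the nose. Finally, the commutativity of diagram (\ref{exact}) is exactly the commutative squares already written down in Subsection \ref{proo} (the squares displayed there for $\ell = 0$ and $\ell = 1$, produced in the same way for general $\ell$): the map $\jmath_{\ell+1}$ sends a midpoint $q \in P_{\ell+1}$ to the unique shortest geodesic in $P_\ell$ from $J_\ell$ to $-J_\ell$ through $q$, and $\tilde{\jmath}_{\ell+1}$ does the same inside $\tilde{P}_\ell$; since a shortest geodesic in $P_\ell$ through $q$ is still shortest in $\tilde{P}_\ell$ (Lemmata \ref{length}, \ref{equipe}), following $\jmath_{\ell+1}$ by the inclusion $\Omega(P_\ell; J_\ell, -J_\ell) \hookrightarrow \Omega(\tilde{P}_\ell; J_\ell, -J_\ell)$ yields the same path as following the inclusion $P_{\ell+1} \hookrightarrow \tilde{P}_{\ell+1}$ by $\tilde{\jmath}_{\ell+1}$.

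The main obstacle is bookkeeping rather than conceptual: one must verify at each stage that the chosen base point $J_\ell$ is genuinely $\tau$-fixed and that a shortest $J_\ell$-to-$(-J_\ell)$ geodesic with trace in $P_\ell$ exists, so that Lemma \ref{lettp} applies with $\sigma = \tau$; both points rest on the explicit descriptions in Subsections \ref{sochain}, \ref{poled}, \ref{poleun}, \ref{ubott} and on Lemmata \ref{length}, \ref{equipe}, but threading them through all eight steps uniformly — and checking that "the centriole containing $J_{\ell+1}$" on the $\tilde{P}$-side is indeed the top-dimensional one picked in the $\U$-Bott chain — is where care is required. This last matching is handled just as in the Claim of Subsection \ref{proo}: among the Grassmannian s-centrioles $\G_k(\bC^{2q})$ only the balanced one $k = q$ admits a pole (Remark \ref{notan}), which forces $\tilde{P}_{\ell+1}$ to be the top-dimensional centriole, and likewise on the $\G_q(\bC^{2q})$ levels there is a unique s-centriole by Theorem 1.2 of \cite{Ma-Qu}.
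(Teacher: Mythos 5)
Your proposal is correct and follows essentially the same route as the paper: the paper's proof of Theorem \ref{fmain} is precisely the combination of the inclusion/matching arguments of Subsection \ref{proo} (via Lemmata \ref{length}, \ref{equipe}, Remark \ref{notan} and the uniqueness of the s-centriole in $\G_q(\bC^{2q})$) with Lemma \ref{lettp} applied step by step along the chains, which is exactly your induction. The only cosmetic remark is that the $\tau$-fixedness of the base points needs no normal-form discussion (your aside about $A_q$, $K_r$ is beside the point): the $J_\ell$ are chosen in the $\SO$-chain, hence are real matrices and automatically fixed by complex conjugation, as you also note.
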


\subsection{The inclusions $\tilde{P}_k\subset \bar{P}_k$}
We start with the standard inclusion
$\tilde{P}_0=\U_{16n}\subset \Sp_{16n}=\bar{P}_0$.
The $\Sp$-Bott chain defined in Subsection \ref{sochain} can be described in terms of
the complex structures $J_1, \ldots, J_8\in \SO_{16n}$ above as follows:
$\bar{P}_{k+1}$ is an s-centriole of  $(\bar{P}_k, J_k)$ relative to $-J_k$, for all $k=0,1,\ldots, 7$ (as already mentioned in Subsection \ref{sochain}, the main reference for this construction is \cite[Section 24]{Mi};  see also \cite{Es}, Section 19, especially pp.~43--44).
With the methods of Subsection \ref{proo} one can show that
we have the totally geodesic embeddings
$\tilde{P}_k\subset \bar{P}_k$, for all $k=0,1,\ldots, 8$.

As mentioned at the beginning of this section,  $\U_{16n}$ lies in $\Sp_{16n}$ as
the fixed point set of the (involutive, inner) group automorphism 
  $\bt : \Sp_{16n}\to \Sp_{16n}$,
$\bt(X):= A_{8n} X A_{8n}^{-1}$.
In the same way as in Subsection \ref{fixedpoints}, we can prove the following
analogue of Theorem \ref{fmain}:

\begin{theorem}\label{mmain} For any $k\in \{0,1,\ldots, 8\}$, the space $\bar{P}_k$ is $\bar{\tau}$-invariant and we have  
\begin{equation}\label{bPk}\tilde{P}_k=(\bar{P}_k^{\bar{\tau}})_{J_k}.\end{equation}
The 
following diagram is commutative:
 \begin{equation}\label{exacts}
\vcenter{\xymatrix{
\tilde{P}_0\ar[d]^{\cap} &
\tilde{P}_1\ar[d]^{\cap}
\ar[l]_{  \supset  } &
\tilde{P}_2
\ar[l]_{\supset} \ar[d]^{\cap}  & \ar[l]_{\supset}\cdots&
\tilde{P}_8\ar[l]_{\supset}\ar[d]^{\cap} &
\\
\bar{P}_0 &
\bar{P}_1
\ar[l]_{  \supset  } &
\bar{P}_2
\ar[l]_{\supset}  & \ar[l]_{\supset}\cdots&
\bar{P}_8\ar[l]_{\supset} &\\
}}
\end{equation}
 where the two horizontal components are the $\U$- and the $\Sp$-Bott chains,
and the vertical arrows  are the inclusions $\tilde{P}_k\subset \bar{P}_k$,
$k\in \{0,1,\ldots, 8\}$, induced by Equation (\ref{bPk}).
The following diagram is also commutative
\begin{equation}\label{exact}
\vcenter{\xymatrix{
\tilde{P}_{\ell+1}\ar[d]^{\cap} \ar[r]^{\tilde{\jmath}_{\ell+1}  \ \ \ \ \ \  }&
\Omega(\tilde{P}_\ell; J_\ell, -J_\ell)\ar[d]^{\cap} &
\\
\bar{P}_{\ell+1}\ar[r]^{\bar{\jmath}_{\ell+1}  \ \ \ \ \ \ } &
\Omega(\bar{P}_\ell; J_\ell, -J_\ell) &\\
}}
\end{equation}
where the maps $\tilde{\jmath}_{\ell+1}$ and $\bar{\jmath}_{\ell+1}$ are the
canonical inclusions given by Equation (\ref{starj}),  for all $\ell\in \{0,1,\ldots, 7\}$.
\end{theorem}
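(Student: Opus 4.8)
The plan is to mirror, step by step, the proof of Theorem~\ref{fmain} from Subsection~\ref{fixedpoints}, replacing the pair $(\U_{16n}\supset\O_{16n})$ with the pair $(\Sp_{16n}\supset\U_{16n})$ and the complex conjugation $\tau$ with the inner involution $\bar{\tau}=\bt$ given by conjugation with $A_{8n}$. First I would record the two structural facts already established: the inclusions $\tilde{P}_k\subset\bar{P}_k$ for $k=0,1,\ldots,8$ (obtained ``with the methods of Subsection~\ref{proo}'', i.e.\ via the distance computation of Lemma~\ref{length} together with Lemma~\ref{equipe} to identify the s-centriole of $(\bar P_k,J_k)$ containing a given shortest geodesic from $J_k$ to $-J_k$), and the fact that $\bar{\tau}$ is an isometric involution of $\bar P_0=\Sp_{16n}$ with $(\Sp_{16n}^{\bar\tau})_I=\U_{16n}=\tilde P_0$. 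The base case $k=0$ of \eqref{bPk} is then immediate; note also that $\bar\tau(J_k)=J_k$ for each $k$, since the $J_k$ are orthogonal complex structures lying in $\U_{16n}=\Sp_{16n}^{\bar\tau}$.

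Next I would set up the induction on $k$. Assume $\bar P_k$ is $\bar\tau$-invariant and $\tilde P_k=(\bar P_k^{\bar\tau})_{J_k}$. Pick a shortest geodesic segment $\gamma_k:[0,1]\to\tilde P_k$ from $J_k$ to $-J_k$ whose midpoint is $J_{k+1}$ (such a $\gamma_k$ exists because $\tilde P_{k+1}$ is an s-centriole of $(\tilde P_k,J_k)$); by Lemmata~\ref{length} and~\ref{equipe} this same $\gamma_k$ is shortest in $\bar P_k$ as well. Now apply Lemma~\ref{lettp} with $\tilde P:=\bar P_k$, $o:=J_k$, $p:=-J_k$, $\sigma:=\bar\tau|_{\bar P_k}$, $\gamma_0:=\gamma_k$, $j_0:=J_{k+1}$, and $\tilde Q:=\bar P_{k+1}$ (the centriole of $(\bar P_k,J_k)$ relative to $-J_k$ containing $J_{k+1}$). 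The hypotheses of Lemma~\ref{lettp} are met: $\sigma$ is an isometry fixing both $J_k$ and $-J_k$, and the trace of $\gamma_k$ lies in $\tilde P_k=(\bar P_k^{\bar\tau})_{J_k}=P$ by the inductive hypothesis. Part (b) of the lemma gives that $\bar P_{k+1}$ is $\bar\tau$-invariant, and part (c) gives $(\bar P_{k+1}^{\bar\tau})_{J_{k+1}}=(C_{-J_k}(\tilde P_k,J_k))_{J_{k+1}}$. It remains to identify the right-hand side with $\tilde P_{k+1}$: by construction $\tilde P_{k+1}$ is the s-centriole of $(\tilde P_k,J_k)$ relative to $-J_k$ containing $J_{k+1}$, hence a connected component of $C_{-J_k}(\tilde P_k,J_k)$, namely the one through $J_{k+1}$, which is exactly $(C_{-J_k}(\tilde P_k,J_k))_{J_{k+1}}$. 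This closes the induction and proves \eqref{bPk} for all $k$.

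Finally I would assemble the two diagrams. Commutativity of \eqref{exacts} is formal once \eqref{bPk} holds: the vertical maps are the inclusions $\tilde P_k=(\bar P_k^{\bar\tau})_{J_k}\hookrightarrow\bar P_k$, and each horizontal arrow is, on both rows, the passage to an s-centriole (equivalently, the midpoint map restricted appropriately); since $\tilde P_{k+1}\subset\tilde P_k$ and $\bar P_{k+1}\subset\bar P_k$ as subsets of $\Sp_{16n}$ and the inclusions $\tilde P_k\subset\bar P_k$ are literal subset inclusions, every square commutes on the nose. For \eqref{exact}, the maps $\tilde\jmath_{\ell+1}$ and $\bar\jmath_{\ell+1}$ of Equation~\eqref{starj} send a midpoint to the unique shortest geodesic from $J_\ell$ to $-J_\ell$ through it; since a shortest geodesic in $\tilde P_\ell$ from $J_\ell$ to $-J_\ell$ is also shortest in $\bar P_\ell$ (Lemmata~\ref{length} and~\ref{equipe}), the diagram commutes, the right vertical arrow being the inclusion $\Omega(\tilde P_\ell;J_\ell,-J_\ell)\hookrightarrow\Omega(\bar P_\ell;J_\ell,-J_\ell)$ induced by $\tilde P_\ell\subset\bar P_\ell$.

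The main obstacle I anticipate is the verification that the inclusions $\tilde P_k\subset\bar P_k$ really do land in the top-dimensional (or otherwise correct) s-centriole of $\bar P_k$, so that the centriole $\tilde Q=\bar P_{k+1}$ fed into Lemma~\ref{lettp} is the one the $\Sp$-Bott chain actually uses; this is precisely where one must invoke that a shortest geodesic in $\tilde P_k$ from $J_k$ to $-J_k$ remains shortest in $\bar P_k$ (Lemma~\ref{length} combined with Lemma~\ref{equipe}, the latter stated for the $\SO$-chain but applicable here by the same reasoning, or via \cite{Qu-Su}), together with the uniqueness of the relevant s-centriole in the $\U$-chain (Subsection~\ref{poleun}). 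Everything else is a routine transcription of the argument for Theorem~\ref{fmain}.
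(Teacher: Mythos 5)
Your proposal is correct and coincides with the paper's own argument: the paper proves Theorem \ref{mmain} precisely by repeating the proof of Theorem \ref{fmain}, i.e.\ by establishing the inclusions $\tilde{P}_k\subset\bar{P}_k$ with the methods of Subsection \ref{proo} and then applying Lemma \ref{lettp} inductively to the inner involution $\bar{\tau}$ (conjugation by $A_{8n}$), exactly as you do. The one point you cite loosely (Lemma \ref{equipe}, stated for the $\SO$-chain, to see that shortest geodesics in $\tilde{P}_k$ from $J_k$ to $-J_k$ remain shortest in $\bar{P}_k$) you flag yourself, and it is easily repaired using $P_k\subset\tilde{P}_k$ together with Lemmata \ref{length} and \ref{equipe} (or the explicit distance computations of Subsections \ref{poled} and \ref{poleun}), so it does not affect the argument.
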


\begin{remark} {\rm We note in passage that all maps in the commutative diagrams described by Equations 
(\ref{exactse}) and (\ref{exacts}) are inclusions of reflective submanifolds.}
\end{remark}

\section{Periodicity of inclusions between Bott chains}\label{bottch}

\subsection{The inclusion $\tP_8\subset \tilde{P}_8$}\label{inclus}

We have the isometries:
$$\tP_8\simeq \SO_{n} \ {\rm and \ } \tilde{P}_8 \simeq \U_{n}.$$
The first is discussed in Proposition \ref{isotype} (b) and the second
in Subsection \ref{ubott}.
Note that $\tilde{P}_1$ is actually contained in $\SU_{16n}$ (see  Subsection
\ref{poleun}).
Thus, from Theorem \ref{fmain} we obtain  the following commutative diagram:  \begin{equation*}
\vcenter{\xymatrix{
\SO_{16n}\ar[d]^{\cap} &
\tP_8\ar[l]_{\supset}\ar[d]^{\cap} &
\\
\SU_{16n} &
\tilde{P}_8\ar[l]_{\supset} &\\
}}
\end{equation*}
where all arrows are inclusion maps, as follows: $P_8\subset P_0=\SO_{16n}$;
$\tilde{P}_8\subset \tilde{P}_1\subset \SU_{16n}$;   $\SO_{16n}$ 
is contained in $\SU_{16n}$ as the identity component of the fixed point set
of $\tau$, the latter being the complex conjugation; finally, by Theorem \ref{fmain},
the space
$\tilde{P}_8$ is $\tau$-invariant and $\tP_8$ is a connected component of
the fixed point set $\tilde{P}_8^\tau$.
We will prove the following result.

\begin{theorem}\label{co}
There exists an isometry $\psi: \tilde{P}_8\to \U_n$
which maps ${P}_8$ to $\SO_n$ and makes the following diagram
commutative:
\begin{equation*}
\vcenter{\xymatrix{
{P}_8\ar[d]^{\cap} \ar[r]^{\psi|_{P_8} \ \ \ }&
 \SO_n\ar[d]^{\cap}
\\
\tilde{P}_8\ar[r]^{\psi} &
\U_n \\
}}
\end{equation*}
Here the inclusions $P_8\subset \tilde{P}_8$ and  $\SO_n\subset \U_n$ are the one mentioned in the diagram (\ref{exactse}), respectively    the standard one (see 
e.g.~Subsection
\ref{1e}).
\end{theorem}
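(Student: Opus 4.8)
The plan is to track, through all eight steps of the two Bott chains simultaneously, explicit models for the spaces $\tilde P_k$ and their $\tau$-fixed subspaces $P_k$, and to verify that at the end one lands on the standard pair $\SO_n \subset \U_n$. Concretely, I would proceed by downward induction (or rather, by carrying an explicit identification along the whole chain). The key observation is that each $\tilde P_{k+1}$ is the top-dimensional s-centriole of $(\tilde P_k, J_k)$ relative to $-J_k$, and in Subsections \ref{poled} and \ref{poleun} the paper has already computed these s-centrioles explicitly: passing from $\U_{2q}$ to $\G_q(\bC^{2q})$ to $\U_q$ is realized by concrete block-matrix formulas (Equations (\ref{aq}), (\ref{unupedoi}), (\ref{xir})). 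So the first step is to fix, once and for all, base points $J_k \in \tilde P_k$ compatible with these standard models, so that the isometry $\tilde P_8 \simeq \U_n$ is given by an \emph{explicit} chain of block-matrix identifications rather than an abstract one; along the way I keep track of the metric rescaling, ending with the bi-invariant metric on $\U_n$ induced by $\langle X,Y\rangle = -16\,{\rm tr}(XY)$.

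The second step is to carry the complex-conjugation involution $\tau$ along this chain and identify the fixed-point subspace at each stage. By Theorem \ref{fmain} we already know $\tilde P_k$ is $\tau$-invariant and $P_k = (\tilde P_k^\tau)_{J_k}$, so the only work is to see what $\tau$ becomes under the explicit identifications of step one. Complex conjugation on $\U_{2q}$ restricts, on the s-centriole $\G_q(\bC^{2q})$ realized as the conjugacy class of $A_q$, to a natural involution; tracking it through the identification $\G_q(\bC^{2q})$-centriole $\simeq \U_q$ of Subsection \ref{poleun}, Remark \ref{minus} already tells us the induced map on the $\U$-factor is $X' \mapsto -X'$ — but I need the \emph{conjugation}-type involution too, and the point is that after the appropriate number of steps these compose to give, on $\tilde P_8 \simeq \U_n$, precisely an involution whose fixed-point component is $\SO_n$ sitting inside $\U_n$ in the standard way. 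The cleanest route is to note that $\SO_{16n} = (\SU_{16n}^\tau)$ and $P_8 \subset P_0 = \SO_{16n}$, combined with $\tilde P_8 \subset \tilde P_1 \subset \SU_{16n}$, so that $P_8 = \tilde P_8 \cap \SO_{16n}$ (a connected component thereof); thus $P_8 \subset \tilde P_8$ is cut out inside the explicit model of $\tilde P_8$ by the condition "real matrix", and one only has to check this condition transports to "orthogonal matrix in $\U_n$" under $\psi$.

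The third step is to assemble these into the isometry $\psi \colon \tilde P_8 \to \U_n$ and check commutativity of the square. Commutativity is automatic from the construction — $\psi$ restricted to $P_8$ lands in the fixed set, which we have identified with $\SO_n$ — so the content is entirely in steps one and two. I would organize the write-up so that the bulk is a single explicit computation for the model s-centrioles (which the paper has essentially already done), plus a bookkeeping argument that the involution and the base points propagate correctly.

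The main obstacle I anticipate is the bookkeeping of base points and involutions through eight iterations: one must choose the $J_k$ coherently so that at stage $k$ the involution $\tau$ acts in the standard block form with respect to the chosen model, and one must make sure that "top-dimensional s-centriole" is compatible with the choice that makes $P_k$ land inside it (this is where Lemma \ref{lettp} and the uniqueness statements for s-centrioles in $\G_q(\bC^{2q})$ are used). A secondary subtlety is the metric normalization: each centriole step rescales the bi-invariant metric, and one has to verify the cumulative factor matches the claimed metric on $\U_n$, but this is routine given Equations (\ref{xyz}) and (\ref{xir}). In fact, rather than re-deriving everything, the efficient proof just invokes: (i) $\tilde P_8 \simeq \U_n$ isometrically by Subsection \ref{ubott}; (ii) $P_8 \simeq \SO_n$ isometrically by Proposition \ref{isotype} (b); (iii) $P_8 = \tilde P_8 \cap \SO_{16n}$ as a consequence of Theorem \ref{fmain} together with $\tilde P_8 \subset \SU_{16n}$ and $\SO_{16n} = (\SU_{16n})^\tau$; and then identifies the resulting inclusion $\SO_n \hookrightarrow \U_n$ with the standard one by checking it is a reflective inclusion of the right type and invoking the classification (Leung, Appendix \ref{apa}) — there being essentially only one reflective $\SO_n$ inside $\U_n$ up to conjugacy.
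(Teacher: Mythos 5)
Your steps (i)--(iii) are exactly the paper's starting point, and your ``efficient proof'' is close in outline to what the paper does; but the crux of the theorem is precisely the step you dispose of in one clause, namely that the inclusion $P_8\subset\tilde P_8$, transported by \emph{some} isometry $\tilde P_8\simeq\U_n$, becomes the standard $\SO_n\subset\U_n$ up to conjugation. You justify this by ``there being essentially only one reflective $\SO_n$ inside $\U_n$ up to conjugacy,'' citing Leung. That cannot be quoted off the shelf: Leung's classification concerns irreducible simply connected symmetric spaces, whereas $\U_n$ is neither (it has a central circle factor), and $\tilde P_8$ is isometric to $\U_n$, not $\SU_n$. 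Moreover, an isometric involution of $\U_n$ fixing a point need not be a group automorphism at all -- it can be an automorphism composed with inversion, whose fixed components are Cartan-embedded quotients such as Grassmannians or $\U_n/\O_n$. Ruling out that possibility, and handling the center, is where the paper's proof actually lives: it splits the Lie triple $T_J\tilde P_8=\c\oplus\check\p$, uses semisimplicity of $P_8\simeq\SO_n$ to show $\tau_*=-\mathrm{id}$ on the one-dimensional center $\c$, restricts to $\check P_8\simeq\SU_n$ where Proposition \ref{gisom} applies, excludes the ``$\mu(g)^{-1}$'' case because no quotient $\SU_n/\SU_n^\mu$ is isometric to $\SO_n$, invokes Wolf's classification of involutive automorphisms of $\SU_n$ to write $\tau$ as $X\mapsto A\overline XA^{-1}$ with $A=A^T$, and then extends this identity from $\check P_8$ to all of $\tilde P_8$ by a differential computation at $J$ on the central direction. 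None of these ingredients appears in your proposal, and without them the ``uniqueness of the reflective $\SO_n$'' is an unproved assertion doing all the work.

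Your alternative, more computational route (tracking explicit block-matrix models, base points $J_k$, and the conjugation involution through all eight centriole steps) is in principle viable -- the paper itself does something of this flavour in Appendix \ref{istype} to pin down the isometry types of $P_4$ and $P_8$ -- but you explicitly leave the bookkeeping as ``the main obstacle,'' and it is nontrivial: the identifications $\G_q(\bC^{2q})$-centriole $\simeq\U_q$ in Subsection \ref{poleun} involve non-canonical conjugations and left translations, so the claim that the condition ``real matrix'' in $\SU_{16n}$ transports to ``orthogonal matrix in $\U_n$'' under the composite identification is exactly what would have to be verified, and it is not. As written, the proposal is a plan whose decisive step is missing on both of its suggested routes.
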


The rest of this subsection is devoted to the proof of this theorem.
First pick $J\in {P}_8$ and denote $${\p}=T_J\tilde{P}_8.$$
Let
$R: \p \times \p \times \p \to \p$
be the curvature tensor of $\tilde{P}_8$ at the point $J$. It is a Lie triple in the sense of
Loos \cite[Vol.~I]{Lo}.  Let  $\c$ be the center of this Lie triple, that is,
$$\c=\{\eta \in \p \ : \ R(\eta, x)y= 0 \ {\rm for \ all \ } x,y\in \p\}.$$
We also denote by $\check{\p}$ the orthogonal complement of $\c$ in $\p$
relative to the Riemann metric $\langle \ , \ \rangle_J$  of $\tilde{P}_8$ at the point $J$.
Both elements of the splitting
$$\p=\c\oplus \check{\p}$$
are Lie subtriples of $\p$.
Recall from Subsection \ref{ubott} that there exists an isometry
$$\varphi: \tilde{P}_8\to \U_n,$$ where $\U_n$  is equipped with the 
bi-invariant Riemannian metric described by Equation  (\ref{xir}).
Thus, the center $\c$ is a 1-dimensional vector subspace of $\p$. Let $\tau_*: \p \to \p$ be the differential of $\tau|_{\tilde{P}_8}$ at $J$.
It is a Lie triple automorphism of $\p$ that preserves the inner product
$\langle  \ , \ \rangle_J$. Thus it leaves both the center $\c$
and its orthogonal complement $\check{\p}$ invariant.
The fixed point set of $\tau_*$, call it Fix$(\tau_*)$, is a Lie sub-triple which splits as:
$${\rm Fix}(\tau_*) = {\rm Fix}(\tau_*|_{\c}) \oplus {\rm Fix}(\tau_*|_{\check{\p}}).$$
The first term of the splitting above is contained in the center of
${\rm Fix}(\tau_*)$. On the other hand, $P_8$ is the connected
component of $J$ in
the fixed point set of $\tau|_{\tilde{P}_8} : \tilde{P}_8 \to \tilde{P}_8$.
Therefore we have
${\rm Fix}(\tau_*)=T_JP_8$; as $P_8$ is isometric to $\SO_n$
(see the beginning of this section),
$T_JP_8$   is isomorphic to the Lie triple of
$\SO_n$. The latter Lie triple has no center, since $\SO_n$ is a semi-simple
symmetric space. Consequently, we have
${\rm Fix}(\tau_*|_{\c})=\{0\}.$
Both $\tau$ and $\tau_*$ are involutive, thus
\begin{equation}\label{ta}\tau_*(x) = -x, \ {\rm for  \ all \ } x\in \c.\end{equation}
Consequently,
 $${\rm Fix}(\tau_*) = {\rm Fix}(\tau_*|_{\check{\p}}).$$
We denote by $\check{P}_8$ the complete connected totally geodesic subspace of
$\tilde{P}_8$ corresponding to the Lie sub-triple $\check{\p}$.
It  is mapped by $\varphi$ isometrically onto $\SU_n$, the latter being equipped with the restriction of the bi-invariant metric
given by  Equation (\ref{xir}). The space $\check{P}_8$ is $\tau$-invariant and we have
\begin{equation}\label{chc}(\check{P}_8^\tau)_J =(\tilde{P}_8^\tau)_J=\tP_8.
\end{equation}
We need the following lemma.

\begin{lemma}\label{ifwei}
 There exists an isometry $\varphi : \tilde{P}_8\to \U_{n}$ such that
$\varphi (J)=I_n$ and $\varphi(P_8)=\SO_n$.
Moreover, there exists $A\in \SU_n$ which satisfies $A=A^T$ such that
\begin{equation}\label{bara}\varphi(\tau(p))=A\overline{\varphi(p)}A^{-1},\end{equation} for all $p\in \check{P}_8$.
\end{lemma}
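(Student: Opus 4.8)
The plan is to produce the isometry $\varphi$ in two independent steps and then check that the complex conjugation $\tau$ is transported to a conjugation-type involution of $\U_n$ of the required form. First I would invoke Subsection \ref{ubott} to get \emph{some} isometry $\varphi_0 : \tilde{P}_8 \to \U_n$, where $\U_n$ carries the metric (\ref{xir}); composing with a left translation of $\U_n$ I may assume $\varphi_0(J)=I_n$. Since left translations are isometries of a bi-invariant metric and the resulting adjustment is harmless, the only real content is arranging simultaneously that $\varphi_0$ carries $P_8$ onto $\SO_n$. For this I would use that $P_8$ is, by Proposition \ref{isotype} (b), isometric to $\SO_n$ with a bi-invariant metric, and that $P_8 = (\tilde{P}_8^\tau)_J$ is a reflective (hence totally geodesic) submanifold through $J$. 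So $\varphi_0(P_8)$ is a totally geodesic submanifold of $\U_n$ through $I_n$, isometric to $\SO_n$ with its bi-invariant metric. By the classification of totally geodesic (indeed reflective) submanifolds of $\U_n$ of the right isometry type — equivalently, by passing to Lie triples at $I_n$ and using that $\mathfrak{so}_n \subset \mathfrak{u}_n$ sits inside up to conjugation by an element of $\U_n$ in essentially one way — there is $g \in \U_n$ with $g\,\varphi_0(P_8)\,g^{-1} = \SO_n$. Replacing $\varphi_0$ by $x \mapsto g\,\varphi_0(x)\,g^{-1}$ (again an isometry fixing $I_n$) gives the $\varphi$ of the first sentence of the lemma.

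Next I would analyze $\varphi \circ \tau \circ \varphi^{-1}$, restricted to $\check{P}_8$. By (\ref{chc}), $\tau$ preserves $\check{P}_8$ and fixes $P_8 \cap \check{P}_8 \supset$ a neighbourhood of $J$ in $P_8$; under $\varphi$, $\check{P}_8$ maps isometrically onto $\SU_n$ (with the restricted metric (\ref{xir})), as recorded just before the lemma. Thus $\rho := \varphi \circ \tau|_{\check{P}_8} \circ \varphi^{-1}$ is an involutive isometry of $\SU_n$ fixing $I_n$ and fixing $\SO_n$ pointwise. Its differential at $I_n$ is an involutive orthogonal Lie-triple automorphism of $\mathfrak{su}_n$ which is the identity on $\mathfrak{so}_n$. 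Because $\mathfrak{su}_n = \mathfrak{so}_n \oplus \mathfrak{m}$ with $\mathfrak{m}$ the symmetric (imaginary) trace-free matrices, and the $(-1)$-eigenspace of $d\rho_{I_n}$ must be exactly $\mathfrak{m}$ (the complement is $\mathfrak{so}_n$, on which $\rho$ is trivial, and there is no room for a larger fixed space since $\dim \mathrm{Fix} = \dim \SO_n$ by the previous paragraph), $d\rho_{I_n}$ coincides with the differential at $I_n$ of the standard involution $X \mapsto \overline{X}$ of $\SU_n$. An involutive isometry of a symmetric space is determined by its value and differential at one point, so $\rho(X) = \overline{X}$ for all $X \in \SU_n$. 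Conjugating $\varphi$ by a suitable unitary would in general be needed to reach this normal form, but since $\varphi(P_8)$ is already $\SO_n = \mathrm{Fix}(X \mapsto \overline X)$, no further adjustment is required here, and one reads off (\ref{bara}) with $A = I_n$ on $\check P_8$.

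The subtlety the lemma's statement flags — the appearance of a symmetric matrix $A \in \SU_n$ with $A = A^T$ rather than simply $A = I_n$ — comes from the fact that we are \emph{not} free to conjugate $\varphi$ away from the normalization $\varphi(P_8)=\SO_n$. If one first normalizes only $\varphi(J)=I_n$ and $d\varphi_J$ suitably (so that $\check{P}_8 \mapsto \SU_n$), then the conjugation picked up on $\SU_n$ is $X \mapsto A\overline{X}A^{-1}$ for some $A$; its fixed point set is $\{A^{1/2}\,O\,A^{-1/2}\colon O\text{ with } O = (A^{-1}\overline{A})\overline O (A^{-1}\overline A)^{-1}\}$, which is a real form of $\SU_n$ conjugate to $\SO_n$ precisely when $A^{-1}\overline A$ is trivial up to the relevant symmetry, i.e.\ when $A$ can be taken with $\overline A = A$, equivalently (after absorbing phases) $A = A^T$. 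So I would argue as follows: run the Lie-triple argument of the second paragraph \emph{before} normalizing $P_8$, obtaining $\varphi$ with $\varphi(J) = I_n$ and $\varphi\circ\tau|_{\check P_8}\circ\varphi^{-1} = (X \mapsto B\overline X B^{-1})$ for some $B \in \U_n$; since this map is involutive, $B\overline B \in Z(\U_n)$, and a short computation lets one rescale $B$ by a scalar so that $B\overline B = I$ and $\det B$ has the value forcing $B \in \SU_n$ after a further scalar adjustment, with $B\overline B = I$ equivalent to $B^{-1} = \overline B$, i.e.\ $B = \overline{B^{-1}} = B^T$ using $B \in \U_n$. Setting $A := B$ gives $A = A^T$, $A \in \SU_n$, and (\ref{bara}). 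The main obstacle I anticipate is precisely this last normalization bookkeeping — tracking the center of $\U_n$ and the two competing determinant/phase constraints to land inside $\SU_n$ with $A = A^T$ — together with the mild point that one must ensure $\varphi(P_8) = \SO_n$ survives the final rescaling of $A$ (it does, since multiplying $A$ by a scalar does not change the involution $X \mapsto A\overline X A^{-1}$).
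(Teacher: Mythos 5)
Your second paragraph contains a genuinely nice observation: once one knows that $\varphi$ can be normalized so that $\varphi(P_8)=\SO_n$ \emph{exactly}, the transported involution $\rho=\varphi\circ\tau\circ\varphi^{-1}$ of $\SU_n$ is pinned down by its $1$-jet at $I_n$ (fixed set $\SO_n$ forces $d\rho_{I_n}=+1$ on $\mathfrak{so}_n$ and $-1$ on its orthogonal complement $i\,{\rm Sym}_0$), hence $\rho(X)=\overline{X}$, and the lemma would follow with $A=I_n$; this even bypasses the automorphism/anti-automorphism dichotomy. The problem is that the normalization itself is exactly the hard content of the lemma, and you do not prove it. Your justification -- ``by the classification of totally geodesic (indeed reflective) submanifolds of $\U_n$ of the right isometry type, equivalently because $\mathfrak{so}_n\subset\u_n$ sits inside in essentially one way up to conjugation'' -- is not a quotable fact in the form you use it: totally geodesic submanifolds of $\U_n$ are not classified, the tangent space of $\varphi_0(P_8)$ is a priori only a Lie \emph{triple} system in $\mathfrak{su}_n$ (not a subalgebra), and the uniqueness up to ${\rm Ad}(\U_n)$ of subtriples with the intrinsic geometry of $\SO_n$ is precisely what has to be shown. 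The paper supplies exactly this missing step, in the equivalent language of involutions: it applies Proposition \ref{gisom} to $\tau':=\varphi\circ\tau\circ\varphi^{-1}|_{\SU_n}$, uses (\ref{chc}) together with the list of involutive automorphisms of $\SU_n$ (Wolf) to exclude the case $\tau'=\mu(\cdot)^{-1}$ (whose fixed component would be $\SU_n/\S(\U_k\times\U_{n-k})$, $\SU_n/\SO_n$ or $\SU_n/\Sp_{n/2}$, none isometric to $\SO_n$) as well as the inner and symplectic-type automorphisms, and only then reads off $\tau'(X)=A\overline{X}A^{-1}$ with $A\in\SU_n$, $A=A^T$. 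A rigorous version of your step~1 (e.g.\ via Leung's tables of reflective submanifolds of $\SU_n$, which is legitimate since $\varphi_0(P_8)$ is reflective) would have to import the same classification input; as written, step~1 assumes the conclusion.

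Your third paragraph, offered as the route matching the stated form of (\ref{bara}), has two concrete defects. First, without the (unproved) normalization, the ``Lie-triple argument of the second paragraph'' no longer applies, so the assertion that $\varphi\circ\tau|_{\check{P}_8}\circ\varphi^{-1}$ has the form $X\mapsto B\overline{X}B^{-1}$ for some $B\in\U_n$ is unsupported -- a priori it could be inner, or of the anti-automorphism type $X\mapsto\mu(X)^{-1}$; ruling these out is exactly the case analysis of the paper. Second, the normalization of $B$ is wrong: involutivity gives $B\overline{B}=\pm I$ (not merely central-modulo-rescaling), and since replacing $B$ by $\mu B$ with $|\mu|=1$ leaves $B\overline{B}$ unchanged, the case $B\overline{B}=-I$ (i.e.\ $B=-B^T$, fixed group of type $\Sp_{n/2}$) cannot be removed by rescaling; it must be excluded by comparing the isometry type of the fixed component with $\SO_n$ via (\ref{chc}), which is the geometric input your sketch omits at that point.
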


\begin{proof}
Let $\varphi: \tilde{P}_8\to \U_n$ be the isometry above. The condition $\varphi(J)=I_n$ is achieved after modifying  $\varphi$ suitably,
that is,  multiplying it pointwise by $\varphi(J)^{-1}$. This proves the first claim in the lemma.

We now prove the second claim. To this end, we first recall that 
$\varphi|_{\check{P}_8}: \check{P}_8\to \SU_n$ is an isometry,
where $\SU_n$ is equipped with the restriction of the bi-invariant metric
given by  Equation (\ref{xir}).
Thus, the map $\tau':= \varphi \circ \tau \circ \varphi^{-1}|_{\SU_n}$ is an
 involutive isometry of $\SU_n$. Moreover,
 the identity element $I_n$ is in the fixed point set $\SU_n^{\tau'}$.
 From Proposition \ref{gisom} we deduce that
there exists an involutive group automorphism $\mu$  of $\SU_n$ such that either
\begin{equation}\label{unu}\tau'(X)=\mu(X), \ {\rm for \ all \ } X \in \SU_n
\end{equation}
or
\begin{equation}\label{doi}\tau'(X)=\mu(X)^{-1},  \ {\rm for \ all \ } X \in \SU_n.
\end{equation}
Moreover, in the second case the space
 $(\SU_n^{\tau'})_{I_n}$ is isometric  to $\SU_n/\SU_n^\mu$, where the last space has the canonical symmetric space metric. Assume that we are in the second case.
 From Equation (\ref{chc}),   $\SO_n$
would be isometric  to   $\SU_n/\SU_n^\mu$. The  involutive group  automorphisms of
$\SU_n$ are classified, see e.g.~\cite[p.~281 and p.~290]{Wo}.
It turns out that  the group $\SU_n^\mu$ is isomorphic  to
$\S(\U_k\times \U_{n-k})$, for some $0\le k \le n$, or to
$\SO_n$, or to ${\rm Sp}_{n/2}$, if $n$ is divisible by 2.
None of the corresponding quotients is a symmetric space isometric to $\SO_n$.

We deduce that Equation (\ref{unu}) holds. Once again from the classification
of the involutive group automorphisms of $\SU_n$ mentioned above
(\cite[p.~290]{Wo}), we deduce readily the
presentation of $\tau$ described by Equation (\ref{bara}).
\end{proof}

 We are now ready to prove the main result of this subsection.

 \noindent {\it Proof of Theorem \ref{co}.} Let $\varphi : \tilde{P}_8 \to \U_n$ be
 the isometry mentioned in Lemma \ref{ifwei}.

 \noindent{\it Claim.} Equation (\ref{bara}) holds actually for all $p\in \tilde{P}_8$.

 Indeed, both $\varphi \circ \tau$ and $A\overline{\varphi}A^{-1}$ are isometries $\tilde{P}_8 \to \U_n$,
 which map $J$ to $I_n$. It remains to show that  their differentials at $J$ are  identically
 equal.  By Equation (\ref{bara}) they are  equal on the last component of the splitting
 $T_J\tilde{P}_8 = \c \oplus \check{\p}$. In fact, they are also equal on $\c$,
 in the sense that for any $x\in \c$ we have
  $$(d\varphi)_J\circ \tau_*(x)=A\overline{(d\varphi)_J(x)} A^{-1}.$$
  This can be justified as follows. First, by Equation (\ref{ta}), the left-hand side is equal to
  $-(d\varphi)_J(x)$. Second, since $\varphi : \tilde{P}_8 \to \U_n$ is an isometry,
  $(d\varphi)_J$ is a Lie triple isomorphism between $T_J\check{P}_8$ and
  $T_{I_n}\U_n$, thus it maps $x$ to the center of $T_{I_n}\U_n$, which is the space of
  all purely imaginary multiples of the identity; hence we have
  $\overline{(d\varphi)_J(x)}=-(d\varphi)_J(x)$ and this matrix commutes with $A$.

Let us now consider the map $c: \U_n \to \U_n$, $c(X)=A \overline{X} A^{-1}$, and observe that the following diagram is commutative:
\begin{equation*}
\vcenter{\xymatrix{
\tilde{P}_8\ar[d]^{\tau} \ar[r]^{\varphi}&
\U_n\ar[d]^{c} &
\\
\tilde{P}_8\ar[r]^{\varphi} &
\U_n &\\
}}
\end{equation*}
Since $\varphi(J)=I_n$, we deduce that $\varphi$ maps $(\tilde{P}_8^\tau)_J$ to $(\U_n)^c_{I_n}$.
The latter set, that is, the fixed point set of $c$, has been determined explicitly in 
\cite[p.~290]{Wo}: it is of the form $B \O_n B^{-1}$,
for some  $B \in \U_n$. The connected component of $I_n$ in this space is
$B\SO_n B^{-1}$. On the other hand, by Equation (\ref{chc}), we have $(\tilde{P}_8^\tau)_J=\tP_8$. Thus
$\varphi$ maps $\tP_8$ isometrically onto
$B \SO_n B^{-1}$.
In conclusion, the map $\psi: \tilde{P}_8 \to \U_n$,  $\psi(X) =B^{-1}\varphi(X)B$, has all the desired properties.
\hfill $\square$

\subsection{The inclusion $\tilde{P}_8\subset \bar{P}_8$}\label{sinclusion}
The following result is analogue to  Theorem \ref{co}:

\begin{theorem}\label{coc}
There exists an isometry $\chi: \bar{P}_8\to \Sp_n$
which maps $\tilde{P}_8$ to $\U_n$ and makes the following diagram
commutative:
\begin{equation*}
\vcenter{\xymatrix{
\tilde{P}_8\ar[d]^{\cap} \ar[r]^{\chi|_{\tilde{P}_8}\ \ \ }&
 \U_n\ar[d]^{\cap}
\\
\bar{P}_8\ar[r]^{\chi} &
\Sp_n \\
}}
\end{equation*}
Here the inclusions $\tilde{P}_8\subset \bar{P}_8$ and  $\U_n\subset \Sp_n$ are the one mentioned in the diagram (\ref{exacts}), respectively the standard one (see 
e.g.~Section
\ref{1em} and the beginning of Section \ref{inclubot}).
\end{theorem}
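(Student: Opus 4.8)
The plan is to mimic the structure of the proof of Theorem~\ref{co} almost verbatim, replacing the pair $(\SO_n\subset\U_n)$ by $(\U_n\subset\Sp_n)$ and the complex conjugation $\tau$ by the involution $\bt$. First I would fix $J\in\tilde P_8$, set $\p=T_J\bar P_8$, and decompose $\p=\c\oplus\check\p$ into the center of the curvature Lie triple and its orthogonal complement. Since $\bar P_8\simeq\Sp_n$ (via the construction in Subsection~\ref{spch}, cf.\ Proposition~\ref{isotype}) and $\Sp_n$ is semisimple, the center $\c$ is actually $\{0\}$ here, so this step is even easier than in Theorem~\ref{co}: $\p=\check\p$ and $\bar P_8$ itself plays the role of $\check P_8$. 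Thus one directly has an isometry $\varphi:\bar P_8\to\Sp_n$ (with $\Sp_n$ carrying the appropriate rescaled bi-invariant metric, analogous to Equation~(\ref{xir})), and after pointwise left-translation we may assume $\varphi(J)=I_n$.

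Next I would transport the involution: set $\tau':=\varphi\circ\bt\circ\varphi^{-1}$, an involutive isometry of $\Sp_n$ fixing $I_n$. Invoking Proposition~\ref{gisom} (the same tool used in Lemma~\ref{ifwei}), $\tau'$ is either an involutive group automorphism $\mu$ of $\Sp_n$ or $X\mapsto\mu(X)^{-1}$ for such a $\mu$; in the second case $(\Sp_n^{\tau'})_{I_n}\simeq\Sp_n/\Sp_n^\mu$. Using that $\tilde P_8\simeq\U_n$ by Subsection~\ref{ubott} and that, by Theorem~\ref{mmain}, $\tilde P_8=(\bar P_8^{\bt})_J$, the fixed-point component $(\Sp_n^{\tau'})_{I_n}$ must be isometric to $\U_n$. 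Consulting the classification of involutive automorphisms of $\Sp_n$ (the symmetric pairs $\Sp_n/(\Sp_k\times\Sp_{n-k})$ and $\Sp_n/\U_n$), one checks that none of the quotients $\Sp_n/\Sp_n^\mu$ is isometric to $\U_n$, so we are in the first case: $\tau'=\mu$ is a group automorphism. Then from the classification one reads off that, up to conjugation by some $B\in\Sp_n$, the fixed point set $\Sp_n^{\mu}$ is the standard $\U_n\subset\Sp_n$ realized as the fixed set of $X\mapsto A_n X A_n^{-1}$ (the inner involution $\bt$ described at the start of Section~\ref{inclubot}). Finally, setting $\chi(X):=B^{-1}\varphi(X)B$ gives an isometry $\bar P_8\to\Sp_n$ sending $\tilde P_8=(\bar P_8^{\bt})_J$ onto the standard $\U_n$ and making the square commute; one checks commutativity as in Theorem~\ref{co} by comparing $\chi\circ\bt$ with $c\circ\chi$ where $c(X)=A_n\overline{(\,\cdot\,)}A_n^{-1}$ is the transported involution, both isometries fixing $I_n$ and, since $\c=\{0\}$, automatically having equal differentials at $J$ once they agree on $\check\p=\p$.

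The main obstacle I expect is precisely the classification step: one must rule out that $(\Sp_n^{\tau'})_{I_n}$ is of the "reflective quotient" type $\Sp_n/\Sp_n^\mu$ and then, within the group-automorphism case, pin down \emph{which} conjugacy class of involutions yields fixed set $\U_n$ rather than $\Sp_k\times\Sp_{n-k}$. This requires a clean reference for the involutions of $\Sp_n$ (the analogue of \cite[pp.~281,290]{Wo} used in Lemma~\ref{ifwei}) and a short metric argument that $\U_n$, with the bi-invariant metric inherited from the chain, cannot coincide with any of the symmetric spaces $\Sp_n/(\Sp_k\times\Sp_{n-k})$ or $\Sp_n/\U_n$ (e.g.\ by comparing ranks, dimensions, or the fact that $\U_n$ is not semisimple). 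Everything else is a routine transcription of the argument already carried out for Theorem~\ref{co}, simplified by the vanishing of the center $\c$. Consequently, I would present the proof briefly, stressing only the points where the $\Sp_n$-classification differs from the $\SU_n$-classification and otherwise referring back to Subsection~\ref{inclus}.
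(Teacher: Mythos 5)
Your proposal is correct and is essentially the paper's own proof: the paper establishes Theorem \ref{coc} by exactly this transcription of the argument from Subsection \ref{inclus}, noting as you do that it is even simpler because $\Sp_n$ is semisimple, so the center $\c$ of the Lie triple vanishes and the reduction to $\check{\p}$ is unnecessary. (One cosmetic slip: the transported involution on $\Sp_n$ whose fixed set is the standard $\U_n$ is the inner automorphism $X\mapsto A_nXA_n^{-1}$, without the complex conjugation appearing in your final sentence.)
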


This can be proved by using the same method as in Subsection \ref{inclus}.
In fact, the proof is  even simpler in this case, since, unlike $\U_n$,
the symmetric space $\Sp_n$ is semisimple, i.e.~the corresponding Lie triple has
no center.

\begin{remark}\label{samesp} {\rm In the same spirit and with the same methods as  in Theorems \ref{co} and \ref{coc},
one can show that for the embeddings $P_4\subset \tilde{P}_4$ and
$\tilde{P}_4\subset \bar{P}_4$  one obtains commutative diagrams
\begin{equation*}
\vcenter{\xymatrix{
{P}_4\ar[d]^{\cap} \ar[r]^{\simeq}&
 \Sp_{2n}\ar[d]^{\cap}
\\
\tilde{P}_4\ar[r]^{\simeq} &
\U_{4n} \\
}} \quad \quad \quad \quad \quad \quad
\vcenter{\xymatrix{
\tilde{P}_4\ar[d]^{\cap} \ar[r]^{\simeq}&
 \U_{4n}\ar[d]^{\cap}
\\
\bar{P}_4\ar[r]^{\simeq} &
\SO_{8n} \\
}}
\end{equation*}
where the horizontal arrows indicate isometries.
More precisely, the spaces $P_4, \tilde{P}_4$, and $\bar{P}_4$ have the submanifold metrics
arising from the three Bott chains and the spaces
$\Sp_{4n}$, $\U_{8n}$, and $\SO_{8n}$ have the metrics described earlier
in this paper (see
the beginning of Section \ref{inclubot}) up to appropriate rescalings.
The inclusions $P_4\subset\tilde{P}_4$, $\tilde{P}_4 \subset \bar{P}_4$
are those
mentioned in the diagrams (\ref{exactse}) respectively (\ref{exacts}) and
the inclusions
$\Sp_{2n}\subset \U_{4n}$ and $\U_{4n}\subset \SO_{8n}$ are standard,
i.e.~those described in Subsections  \ref{5e}, respectively \ref{5em}.
  }
\end{remark}

\begin{remark}\label{ream} {\rm Assume that in the above context $n$ is divisible by $16$. As we have already pointed out
(see Remark \ref{sclaar} and Sections \ref{spch} and \ref{ubott}),   
each of the three Bott chains  can be extended using the centriole construction.
One obtains:
\begin{align*}
P_0\supset P_1\supset P_2 \supset \ldots \supset P_{16},\\
\tilde{P}_0\supset \tilde{P}_1\supset \tilde{P}_2 \supset \ldots \supset \tilde{P}_{16},\\
\bar{P}_0\supset \bar{P}_1\supset \bar{P}_2 \supset \ldots \supset \bar{P}_{16},
\end{align*}
where we have  isometries
$$P_{16}\simeq \SO_{n/16},\quad \tilde{P}_{16}\simeq \U_{n/16},
\quad \bar{P}_{16}\simeq \Sp_{n/16}.$$
 Theorems \ref{co} and \ref{coc} imply that the centriole constructions  can be 
performed in such a way that we have
$$P_k\subset \tilde{P}_k,\quad \tilde{P}_k\subset \bar{P}_k, \quad
8\le k \le 16,$$
and these inclusions are again those described by Tables 5 and 6,
up to some obvious changes of the subscripts.
This observation is one of the main achievements of our paper.  
We can express it in a more informal manner, by saying that 
 the inclusions $P_{k+8}\subset \tilde{P}_{k+8}, \tilde{P}_{k+8}
\subset \bar{P}_{k+8}$
are the same as $P_{k}\subset \tilde{P}_{k}$, respectively $\tilde{P}_{k}\subset 
\bar{P}_{k}$.}
\end{remark}

\section{Application: periodicity of maps between homotopy groups}\label{secper}

In this section we apply the main results of this paper, which are 
differential geometric, to the topology of classical Riemannian symmetric spaces. 
The results we prove here, i.e.~Theorems \ref{maintheo} and \ref{maitheo}, followed by Corollaries \ref{cor} and 
\ref{coragain},
  are in fact  just common knowledge in homotopy theory (one can prove them using techniques described e.g.~in \cite[Ch.~1]{May}).  
 The  goal of our approach is to provide more insight concerning these results, by 
indicating that there is  a differential geometric periodicity  phenomenon that stays 
  behind them, similar to the periodicity phenomenon that stays behind Bott's classical periodicity
  theorems \cite{Bo}.

We start by recalling that a simple application of the long exact homotopy sequence
of the principal bundle $\U_m\to \U_{m+1}\to \bS^{2m+1}$ shows
that the homotopy groups $\pi_i(\U_m)$ are $m$-stable.
More precisely, they remain  unchanged up to an isomorphism for any $m$ which is
larger than $\frac{i}{2}$. We denote by  $\pi_i(\U)$ the resulting group,
or rather, isomorphism class of groups.
The Bott periodicity theorem \cite{Bo} for the unitary group says that
$\pi_i( \U) = \pi_{i+2}(\U),$
for $i=0,1,2, \ldots$.
There is also a version of this result for the orthogonal and symplectic group.
First of all, we have
$\pi_i(\O_m)\simeq \pi_i(\O_{m+1})=:\pi_i(\O)$ for
all $m$ and $i$ such that $m\ge i+1$.
The periodicity theorem  in this case
says that
$\pi_i(\O)= \pi_{i+8}(\O)$, for  $i=0,1,2, \ldots$.
Similarly,
$\pi_i(\Sp)=\pi_{i+8}(\Sp)$, for $i=0,1,2, \ldots$ (see \cite[Section 24]{Mi}).

\subsection{The maps induced by  $\O_m\hookrightarrow \U_m$ }
Let us  consider  the canonical embedding map
$\imath_m: \O_m\hookrightarrow \U_m$. Let
$f^m_i:=(\imath_m)_*: \pi_i(\O_m)\to \pi_i(\U_m)$ be the map between homotopy groups
induced by $\imath_m$.
The following notion will be used in this section.

\begin{definition}\label{letapr} Let $\A, \A',\B$, and $\B'$ be groups and
$f: \A\to \B$, $f': \A'\to \B'$  group homomorphisms. We say that $f$ is {\it equivalent }
to $f'$ and denote $f\sim f'$ if there exist group isomorphisms  $g: \A\to \A'$ and $h: \B\to \B'$ that make the following diagram  commutative:
$$
\vcenter{\xymatrix{
\A
\ar[r]^{f}\ar[d]^{g} &
\B
 \ar[d]^{h}
\\
\A'
\ar[r]^{f'}  &
\B'
\\
}}
$$
\end{definition}
We will need the following result.

\begin{lemma}\label{equivc}
 The equivalence class modulo
$\sim$ of the map $f^m_i: \pi_i(\O_m)\to \pi_i(\U_m)$  is
stable. That is, modulo the equivalence relation $\sim$, the map $f^m_i$  is independent
of $m$ for all $m\ge i+1$.
\end{lemma}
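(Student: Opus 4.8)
The plan is to exhibit, for each $m \ge i+1$, an equivalence (in the sense of Definition \ref{letapr}) between $f^m_i$ and $f^{m+1}_i$, and then to iterate. The natural diagram to look at compares the two fibration sequences $\O_m \to \O_{m+1}\to \bS^m$ and $\U_m \to \U_{m+1}\to \bS^{2m+1}$, linked by the inclusion maps $\imath_m$, $\imath_{m+1}$, and the induced map of base spheres. First I would write down the morphism of long exact homotopy sequences induced by this commuting ladder of fibre bundles, i.e.
\begin{equation*}
\vcenter{\xymatrix{
\pi_{i+1}(\bS^m)\ar[r]\ar[d] & \pi_i(\O_m)\ar[r]\ar[d]^{f^m_i} & \pi_i(\O_{m+1})\ar[r]\ar[d]^{f^{m+1}_i} & \pi_i(\bS^m)\ar[d]\\
\pi_{i+1}(\bS^{2m+1})\ar[r] & \pi_i(\U_m)\ar[r] & \pi_i(\U_{m+1})\ar[r] & \pi_i(\bS^{2m+1})\\
}}
\end{equation*}
with all vertical maps on the spheres being the ones induced by the relevant inclusions. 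For $m \ge i+1$ one has $2m+1 > m > i$, so $\pi_i(\bS^m)=\pi_i(\bS^{2m+1})=0$ and also $\pi_{i+1}(\bS^m)=0$ when $m \ge i+2$; in the borderline case $m=i+1$ one has $\pi_{i+1}(\bS^m)=\pi_{i+1}(\bS^{i+1})=\bZ$ mapping into $\pi_i(\O_{i+1})$, but the corresponding map $\pi_{i+1}(\bS^{2m+1})\to\pi_i(\U_m)$ has source $0$, so I would need the slightly finer statement that the image of $\bZ$ in $\pi_i(\O_{i+1})$ already dies under $f^{i+1}_i$ — which follows because it maps into $\pi_{i+1}(\bS^{2i+3})=0$ through the commuting square.

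The key step is then purely diagram-chasing: in the displayed ladder the maps $\pi_i(\O_m)\to\pi_i(\O_{m+1})$ and $\pi_i(\U_m)\to\pi_i(\U_{m+1})$ are isomorphisms (this is exactly the stability statement recalled just before the lemma, coming from the vanishing of the neighbouring sphere homotopy groups), and these isomorphisms intertwine $f^m_i$ with $f^{m+1}_i$. Taking $g$ to be the stabilization isomorphism $\pi_i(\O_m)\xrightarrow{\sim}\pi_i(\O_{m+1})$ and $h$ the stabilization isomorphism $\pi_i(\U_m)\xrightarrow{\sim}\pi_i(\U_{m+1})$, the middle square of the ladder is precisely the commutative square required by Definition \ref{letapr}, so $f^m_i \sim f^{m+1}_i$. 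Since $\sim$ is visibly transitive, $f^m_i$ lies in a single equivalence class for all $m \ge i+1$, which is the assertion; one may then legitimately speak of a stable map $f_i \colon \pi_i(\O)\to\pi_i(\U)$, well defined up to $\sim$.

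The only genuinely delicate point is the boundary case $m = i+1$, where the exact sequence for the orthogonal tower does not immediately give that $\pi_i(\O_{i+1})\to\pi_i(\O_{i+2})$ is injective (the term $\pi_{i+1}(\bS^{i+1})=\bZ$ is in the way). I expect this to be the main obstacle, and I would handle it by the commutativity of the left square: the composite $\pi_{i+1}(\bS^{i+1})\to\pi_i(\O_{i+1})\xrightarrow{f^{i+1}_i}\pi_i(\U_{i+1})$ factors through $\pi_{i+1}(\bS^{2i+3})=0$, so the (possibly nonzero) part of $\pi_i(\O_{i+1})$ that is not seen by $\pi_i(\O_{i+2})$ is anyway killed by $f^{i+1}_i$; hence $f^{i+1}_i$ descends through the stabilized group and still matches $f^{i+2}_i$ under the induced isomorphisms. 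Everything else is a routine application of the five lemma / naturality of the long exact sequence, and no choices beyond the standard ones are involved.
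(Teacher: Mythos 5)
Your core argument is the same as the paper's: the inclusions $\O_m\subset\U_m$, $\O_m\subset\O_{m+1}$, $\U_m\subset\U_{m+1}$ give, by functoriality, a commutative square relating $f^m_i$ and $f^{m+1}_i$, and the stabilization maps are shown to be isomorphisms via the long exact homotopy sequences of the bundles $\O_m\to\O_{m+1}\to\bS^m$ and $\U_m\to\U_{m+1}\to\bS^{2m+1}$; the paper does exactly this, only without writing out the full ladder of fibrations.

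The one place where you go beyond the paper is the borderline case $m=i+1$, and there your fix does not work. Knowing that the composite $\pi_{i+1}(\bS^{i+1})\to\pi_i(\O_{i+1})\to\pi_i(\U_{i+1})$ vanishes shows only that $f^{i+1}_i$ factors through the stabilization map $\pi_i(\O_{i+1})\to\pi_i(\O_{i+2})$; it does not produce the isomorphism $g$ demanded by Definition \ref{letapr}, and no such isomorphism need exist: for $i=1$ one has $\pi_1(\O_2)\simeq\bZ$ while $\pi_1(\O_3)\simeq\bZ_2$, so $f^2_1$ and $f^3_1$ are not equivalent in the sense of Definition \ref{letapr} at all. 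Thus your argument really establishes the statement only for $m\ge i+2$, which is the genuine stability range for $\pi_i(\O_m)$; the paper's parenthetical assertion that $\pi_i(\O_m)\to\pi_i(\O_{m+1})$ is an isomorphism already for $m\ge i+1$ suffers from the same off-by-one, so your instinct that this case is delicate was right, but the repair is illusory. This is harmless for everything that follows (only the stable range is ever used, and one can read $m\ge i+2$ throughout), but you should either restrict the range accordingly or delete the claimed treatment of $m=i+1$, since as written it asserts an equivalence that can fail.
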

\begin{proof} Let us consider the commutative diagram
$$
\vcenter{\xymatrix{
\O_m
\ar[r]^{\imath_m}\ar[d] &
\U_m
 \ar[d]
\\
\O_{m+1}
\ar[r]^{\imath_{m+1}} &
\U_{m+1}
\\
}}
$$
The vertical arrows indicate the canonical inclusion maps, given by
$$A\mapsto
\left(%
\begin{array}{ccccccc}
1 & 0 \\
  0 & A \end{array}%
\right)
$$
for any $m\times m$ orthogonal matrix $A$.
By functoriality we obtain the following commutative diagram.
$$
\vcenter{\xymatrix{
\pi_i(\O_m)
\ar[r]^{f^m_i}\ar[d] &
\pi_i(\U_m)
 \ar[d]
\\
\pi_i(\O_{m+1})
\ar[r]^{f^{m+1}_i} &
\pi_i(\U_{m+1})
\\
}}
$$
We only need to recall that for any $m \ge i+1$ both vertical arrows are isomorphisms
(to show that the map $\pi_i(\O_m)\to \pi_i(\O_{m+1})$ is an isomorphism for $m\ge i+1$, one uses the long exact sequence of the principal bundle
$\O_m \to \O_{m+1}\to \bS^m$).
\end{proof}

 Let us denote by $f_i$ the {\it equivalence class} of the map
$f^m_i$, for $m\ge i+1$.
Before stating the main result of this subsection, let us note that both the domain and the
codomain of the map $f^m_i:\pi_i(\O_m)\to \pi_i(\U_m)$ are periodic relative to $i$, with period equal to $8$.
The following theorem says
 that the map
$f^m_i$ itself is periodic (modulo $\sim$).

\begin{theorem}\label{maintheo} We have $f_i= f_{i+8}$, for all $i \ge 0$. \end{theorem}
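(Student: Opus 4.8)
The plan is to deduce the periodicity of $f_i$ directly from the geometric periodicity encoded in the two commutative diagrams (\ref{exactse}) and the diagrams of type (\ref{exact}), together with Theorems \ref{co} and \ref{coc} (more precisely the version of Theorem \ref{co} that says $P_8 \subset \tilde P_8$ is, up to isometry, the standard $\SO_n \subset \U_n$). The key point is that the inclusion $P_8 \subset \tilde P_8$ represents, modulo $\sim$, the stable map $f_i$ after a suitable shift of degree, and that the construction of the Bott chains can be iterated one more period (Remark \ref{ream}) to produce $P_{16} \subset \tilde P_{16}$, which is again the standard inclusion $\SO_{n/16} \subset \U_{n/16}$. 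Comparing the two will force $f_i = f_{i+8}$.

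First I would fix $i \ge 0$ and take $n$ divisible by a sufficiently large power of $16$ so that $\SO_n$, $\U_n$, and the intermediate spaces are all in the stable range for $\pi_i$ and for $\pi_{i+1}, \ldots, \pi_{i+8}$. Starting from the commutative diagram (\ref{exact}) (in its $\SO$-versus-$\U$ form), the maps $\jmath_{\ell+1}$ and $\tilde\jmath_{\ell+1}$ induce, on homotopy groups in the stable range, isomorphisms $\pi_k(P_{\ell+1}) \cong \pi_{k+1}(P_\ell)$ and $\pi_k(\tilde P_{\ell+1}) \cong \pi_{k+1}(\tilde P_\ell)$ (this is the Morse-theoretic fact recalled in Subsection \ref{proofbo}). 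Hence for each $\ell$ the square
\begin{equation*}
\vcenter{\xymatrix{
\pi_k(P_{\ell+1}) \ar[r] \ar[d] & \pi_{k+1}(P_\ell) \ar[d] \\
\pi_k(\tilde P_{\ell+1}) \ar[r] & \pi_{k+1}(\tilde P_\ell)
}}
\end{equation*}
commutes, the horizontal arrows are isomorphisms, and the vertical arrows are induced by the inclusions $P_{\ell+1} \subset \tilde P_{\ell+1}$ and $P_\ell \subset \tilde P_\ell$. Composing these squares for $\ell = 0, 1, \ldots, 7$ shows that, modulo the equivalence $\sim$ of Definition \ref{letapr}, the map $\pi_i(P_8) \to \pi_i(\tilde P_8)$ induced by the inclusion is identified with $f_{i+8}$ (the $8$-fold loop shift applied to $\pi_{i+8}(P_0) \to \pi_{i+8}(\tilde P_0)$, which is exactly $(\imath_{16n})_*$ in the stable range, i.e. $f_{i+8}$). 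On the other hand, by Theorem \ref{co} the inclusion $P_8 \subset \tilde P_8$ is isometric to the standard $\SO_n \subset \U_n$, so the induced map $\pi_i(P_8) \to \pi_i(\tilde P_8)$ is, modulo $\sim$, precisely $f_i$. Therefore $f_i = f_{i+8}$.

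To make the identification of $\pi_i(P_8) \to \pi_i(\tilde P_8)$ with $f_{i+8}$ airtight, I would invoke Remark \ref{ream}: extend all three Bott chains one more period to $P_{16} \subset \tilde P_{16} \subset \bar P_{16}$, with $P_{16} \simeq \SO_{n/16}$, $\tilde P_{16} \simeq \U_{n/16}$, and the inclusion $P_{16} \subset \tilde P_{16}$ again the standard one. Running the same diagram chase for $\ell = 8, \ldots, 15$ identifies $f_i$ (represented by $P_{16} \subset \tilde P_{16}$) with the $8$-fold shift of the map $P_8 \subset \tilde P_8$, which by the first half of the argument is $f_{i+8}$. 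Either route gives the equality $f_i = f_{i+8}$ in the sense of $\sim$, which is the content of Theorem \ref{maintheo}.

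The main obstacle, and the place that needs care, is the bookkeeping of the degree shifts and the stable ranges: one has to check that for the chosen $n$ all eight (or sixteen) stages $\pi_{i+\ell}(P_{8-\ell})$, $\pi_{i+\ell}(\tilde P_{8-\ell})$ lie in the range where the Morse-theoretic map $j_*$ is an isomorphism, and that the resulting composite isomorphism $\pi_i(P_8) \cong \pi_{i+8}(\SO_{16n})$ really does intertwine the inclusion-induced maps on the two ends (this is exactly the commutativity of the diagrams (\ref{exact}), applied repeatedly). Once the commutative ladders are set up, the conclusion is purely formal; the substantive input is entirely geometric and already supplied by Theorems \ref{fmain} and \ref{co}.
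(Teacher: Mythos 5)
Your proposal follows essentially the same route as the paper's own proof: functoriality applied to the commutative ladder (\ref{exact})/(\ref{exaseq}), the Morse-theoretic isomorphisms $(\jmath_{k+1})_*$, $(\tilde{\jmath}_{k+1})_*$ in the stable range, and Theorem \ref{co} identifying $P_8\subset \tilde{P}_8$ with the standard $\SO_n\subset \U_n$, which together yield $f_{i+8}\sim f_i$; the detour through $P_{16}$ and Remark \ref{ream} is not needed. The only point the paper adds is a separate (trivial) treatment of $i=0$, since the quoted isomorphism $j_*$ is asserted only for $i>0$; there one simply notes $\pi_0(\U)=\pi_8(\U)=\{0\}$.
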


\begin{proof}
Let us first assume that $i> 0$.
We use the notations which have been established in
the previous sections.
The commutative diagram (\ref{exact}) induces by functoriality
 \begin{equation}\label{exaseq}
\vcenter{\xymatrix{
\pi_i(\tP_{k+1})\ar[d]^{} \ar[r]^{({{\jmath}}_{k+1})_*}&
\pi_i(\Omega(\tP_k))\ar[d]^{}\ar[r]^{\simeq} &
 \pi_{i+1}(\tP_k)\ar[d]
 \\
\pi_i(\tilde{P}_{k+1})\ar[r]^{(\tilde{\jmath}_{k+1})_*} &
\pi_i(\Omega(\tilde{P}_k))\ar[r]^{\simeq} &
 \pi_{i+1}(\tilde{P}_k)
}}
\end{equation}
 for all $k\in \{0,1,\ldots, 7\}$. Recall that  $P_0=\SO_{16n}$, $\tilde{P}_0=\U_{16n}$,
 and both $(\jmath_{k+1})_*$ and $(\tilde{\jmath}_{k+1})_*$ are
 isomorphisms for any $i$ which is sufficiently small  compared to $n$
 (see Subsection \ref{proofbo} and the references therein).
Since $\pi_{i+8}(\SO_{16n})=\pi_{i+8}(\O_{16n})$, we obtain  the diagram:
\begin{equation*}
\vcenter{\xymatrix{
\pi_{i+8}(\O_{16n})\ar[d]^{f_{16n}^{i+8}} \ar[r]^{\simeq }&
\pi_{i}(\tP_8)\ar[d]^{} &
\\
\pi_{i+8}(\U_{16n})\ar[r]^{\simeq} &
\pi_{i}(\tilde{P}_8) &\\
}}
\end{equation*}
 Finally, from Theorem \ref{co} we deduce that we have a commutative diagram
of the form
\begin{equation*}
\vcenter{\xymatrix{
\pi_{i}(\tP_8)\ar[d]^{} \ar[r]^{\simeq }&
\pi_{i}(\SO_n)\ar[d]^{f^n_i} &
\\
\pi_{i}(\tilde{P}_8)\ar[r]^{\simeq} &
\pi_{i}(\U_n) &\\
}}
\end{equation*}
We only need to use the fact that $\pi_i(\SO_n)=\pi_i(\O_n)$.

We now analyze the case $i=0$. We have  $\pi_0(\U) =\pi_8(\U)=\{0\}$,
thus the maps $f_0$ and $f_8$ are clearly equal.
This finishes the proof of the theorem.
\end{proof}

%\begin{center}
\begin{table}[h]
\begin{tabular}{|c| cccccccc |}
	\hline
$i$ mod 8& 0 &  1& 2 & 3 & 4 & 5 & 6 & 7     \\
	\hline
$\pi_i(\O)$& $\bZ_2$ & $\bZ_2$ & 0 & $\bZ$& 0 & 0 & 0 & $\bZ$   \\
$  \pi_i (\U)$ & 0 & $\bZ$ & 0 & $\bZ$ & 0 & $\bZ$& 0 & $\bZ$   \\
$f_i$ & $0$ & $0$ & $0$ & $k\mapsto 2k$ & 0 & 0 & 0 & id \\
\hline
\end{tabular}
\caption{}
\end{table}
%\end{center}
To calculate the maps $f_i$ explicitly,
we can  use the long exact homotopy sequence of the principal bundle $\O_m \to
\U_m \to \U_m/\O_m$. This information is described in Table 1
(where we have used the table from \cite[p.~142]{Mi}).

Justifications are needed only for the maps $f_3$ and $f_7$.
Let us  calculate the map
$f_3: \pi_3(\O)\to \pi_3(\U)$. Since $\pi_4(\U/\O)= 0$
and $\pi_3(\U/\O)= \bZ_2$ (cf. e.g. \cite[Section 1]{Bo}), we obtain the following exact sequence:
$$0 \to \bZ \stackrel{f_3}{\to} \bZ \to \bZ_2 \to 0.$$
This implies the desired description of $f_3$.
As about $f_7$, the relevant exact sequence is
$$0\to \bZ \stackrel{f_7}{\to} \bZ  \to 0.$$

\begin{remark}\label{notes} {\rm
Note that the  two exact sequences above can be used to show that for any
$j=0,1,2, \ldots$, the map $f_{8j+3} : \bZ \to \bZ$ is given by
$k\mapsto 2k$, $k\in \bZ$, and  $f_{8j+7}:\bZ \to \bZ$ is the identity map.
Therefore this simple argument gives an alternative proof to Theorem \ref{maintheo}.
}
\end{remark}

We can combine Theorem \ref{maintheo} above with the commutative diagram
given by (\ref{exaseq}) and the results concerning the exact expressions of the embeddings
$P_k\subset \tilde{P}_k$, $k=1,2,\ldots, 8$ obtained in
Appendix  \ref{apa} (see Table 5  and Subsections \ref{2e} - \ref{8e}). We deduce:

\begin{corollary}\label{cor} Let $A_m\hookrightarrow B_m$ be given by any of the inclusions
$$\begin{array}{lllll}
\O_{2m}/\U_{m}\subset \G_{m}(\bC^{2m}), &
\U_{2m}/\Sp_m\subset \U_{2m},&
\G_m(\bH^{2m})\subset \G_{2m}(\bC^{4m}),&
\Sp_m\subset \U_{2m},\\
\Sp_m/\U_m\subset \G_m(\bC^{2m}),&
\U_m/\O_m\subset \U_m,&
\G_m(\bR^{2m})\subset \G_m(\bC^{2m}).& {}
\end{array}
$$
Then the maps $\pi_i(A_m)\to \pi_i(B_m)$ induced between the stable homotopy groups are
stable relative to $m$ and periodic relative to $i$, with period equal to 8.
\end{corollary}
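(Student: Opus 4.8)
The plan is to deduce Corollary \ref{cor} from Theorem \ref{maintheo} by transporting the periodicity statement along the commutative diagrams already established. The starting point is the observation that each of the listed inclusions $A_m \subset B_m$ appears as one of the vertical maps $P_k \subset \tilde{P}_k$ in the diagram (\ref{exactse}), for a specific value of $k \in \{1,2,\ldots,8\}$ and with $n$ replaced by a suitable power-of-two multiple of $m$ (the precise bookkeeping being recorded in Appendix \ref{apa}, Table 5 and Subsections \ref{2e}--\ref{8e}). So the first step is simply to match each inclusion in the list with its index $k$ and to invoke the identifications $\tilde{P}_k \simeq$ one of $\U_\bullet, \G_\bullet(\bC^\bullet)$ established in Subsections \ref{poled}, \ref{poleun}, \ref{ubott} together with the analogous identifications for the $\SO$-Bott chain from Subsection \ref{sochain} and Milnor \cite[Section 24]{Mi}.

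Next I would set up the functorial consequence of the square (\ref{exact}). Applying $\pi_i$ and using that $j_*$ and $\tilde{j}_*$ are isomorphisms in the stability range (Subsection \ref{proofbo}), I get, exactly as in the proof of Theorem \ref{maintheo}, a chain of commutative squares
\begin{equation*}
\vcenter{\xymatrix{
\pi_{i+1}(\tP_k)\ar[d] \ar[r]^{\simeq} & \pi_i(\tP_{k+1})\ar[d] \\
\pi_{i+1}(\tilde{P}_k)\ar[r]^{\simeq} & \pi_i(\tilde{P}_{k+1})
}}
\end{equation*}
linking the map induced by $P_k \subset \tilde{P}_k$ to the one induced by $P_{k+1}\subset \tilde{P}_{k+1}$, with a degree shift of one. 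Iterating this square from index $k$ down to index $8$, and then across the "wrap-around" identification provided by Theorem \ref{co} (which identifies $P_8 \subset \tilde{P}_8$ with the standard $\SO_n \subset \U_n$), one sees that the map $\pi_i(A_m) \to \pi_i(B_m)$ induced by the inclusion sitting at index $k$ is equivalent, modulo $\sim$ in the sense of Definition \ref{letapr}, to $f^n_{i+(8-k)}$ for an appropriate $n$. Stability relative to $m$ is then immediate from Lemma \ref{equivc} together with the stability statements for the individual symmetric spaces recalled at the start of Section \ref{secper}.

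Finally, periodicity relative to $i$ with period $8$ drops out: by Theorem \ref{maintheo} we have $f_{i+(8-k)} = f_{i+8+(8-k)} = f_{(i+8)+(8-k)}$, so the equivalence class of $\pi_i(A_m) \to \pi_i(B_m)$ agrees with that of $\pi_{i+8}(A_m) \to \pi_{i+8}(B_m)$. The case $i=0$ (or small $i$ where the stability range from Subsection \ref{proofbo} is not yet attained for the given $m$) is handled separately, exactly as at the end of the proof of Theorem \ref{maintheo}, by passing to a larger representative in the stable range or by noting that the relevant low-degree homotopy groups are explicitly known and the maps are forced.

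The main obstacle I anticipate is not conceptual but organizational: one must verify, for each of the seven inclusions in the list, that it genuinely occurs as $P_k \subset \tilde{P}_k$ with the correct index $k$ and the correct substitution for $n$, and that the submanifold metrics arising from the Bott-chain construction match (up to scaling, which is irrelevant for homotopy) the standard homogeneous-space structures named in the corollary. This is precisely the content deferred to Appendix \ref{apa}; granting that, the homotopy-theoretic argument above is a routine diagram chase built on Theorem \ref{maintheo}, the squares (\ref{exact}), and Lemma \ref{equivc}.
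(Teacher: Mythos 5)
Your overall strategy---match each listed inclusion with a vertical arrow $P_k\subset\tilde{P}_k$ of diagram (\ref{exactse}), transport the induced maps on homotopy along the functorial squares coming from (\ref{exact}) via Bott's isomorphisms, and then invoke Theorem \ref{maintheo}---is the paper's strategy, but two points in your write-up are genuine gaps rather than bookkeeping. First, the matching you propose to ``grant'' is false for one of the seven inclusions: $\O_{2m}/\U_m\subset\G_m(\bC^{2m})$ is \emph{not} of the form $P_k\subset\tilde{P}_k$, because $P_1=\SO_{2m}/\U_m$ is only one of the two connected components of $\O_{2m}/\U_m$ (see Table 5). The paper treats this case separately: for $i\neq 0$ one has $\pi_i(\O_{2m}/\U_m)=\pi_i(P_1)$, since positive-degree homotopy groups only see the base component, so the induced map is equivalent to $\pi_i(P_1)\to\pi_i(\tilde{P}_1)$, and the remaining degree is handled directly. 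Without such a remark your argument does not literally apply to the first inclusion on the list; for $k=2,\ldots,7$ the matching is exactly as in Table 5, as you say.

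Second, your reduction runs in the wrong direction. Iterating your square from index $k$ to index $8$ \emph{lowers} the degree: the map at index $k$ in degree $i$ is equivalent to $f^n_{i-(8-k)}$, not $f^n_{i+(8-k)}$ (for $k=1$ your formula would give $f_{i+7}$, whereas the correct class is $f_{i-7}\sim f_{i+1}$, and these differ in general). More seriously, for $i\le 8-k$ the degrees along your chain leave the range $i>0$ in which Bott's map $j_*$ is an isomorphism (Subsection \ref{proofbo}), so the reduction to index $8$ is simply unavailable there; enlarging $m$ does not help, because the obstruction is the degree becoming nonpositive, not the stability range in $m$. The paper avoids both problems by iterating (\ref{exaseq}) \emph{upward} to index $0$: this raises the degree, showing that $\pi_i(P_k)\to\pi_i(\tilde{P}_k)$ is equivalent to $\pi_{i+k}(\O_m)\to\pi_{i+k}(\U_m)$, i.e.\ to $f_{i+k}$, for all $i\ge 1$ in the stable range, and it needs no wrap-around, since $P_0\subset\tilde{P}_0$ is the standard inclusion $\SO_{16n}\subset\U_{16n}$ by construction (Theorem \ref{co} is needed only inside the proof of Theorem \ref{maintheo} itself). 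With these two corrections your diagram chase, combined with Lemma \ref{equivc} for stability in $m$ and Theorem \ref{maintheo} for periodicity in $i$, does yield the corollary.
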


The exact expression of the stable maps $\pi_i(A_m)\to \pi_i(B_m)$ can be deduced from the table  above  by finding $n$ and $k$ such that 
$P_k$ and $\tilde{P}_k$ are equal to $A_m$ respectively $B_m$ for a certain
$m$ which depends on $n$ (see Table 5 for $1\le k\le 7$). 
The only embedding for which this is not possible is 
$\O_{2m}/\U_{m}\subset \G_{m}(\bC^{2m})$. In this case,
we note that $P_1=\SO_{2m}/\U_m$, where $m=8n$ (see Subsection
\ref{sochain} or Table 5). Consequently,  $\pi_i(\O_{2m}/\U_{m})=\pi_i(P_1)$ for any
$i\neq 0$ and therefore in this case the map  $\pi_i(\O_{2m}/\U_{m})\to \pi_i(\G_m(\bC^{2m}))$ is equivalent  to  $\pi_i(P_1)\to \pi_i(\tilde{P}_1)$ in the sense of Definition \ref{letapr}. For $i\equiv 0$ mod 8, we note that
$\pi_i(\G_m(\bC^{2m}))=\{0\}$, hence the map $\pi_i(\O_{2m}/\U_{m})\to \pi_i(\G_m(\bC^{2m}))$ is identically zero.
To deal with any of the remaining six inclusions we just take  
 $k\in \{2,3, \ldots, 7\}$ and use inductively the commutative diagram (\ref{exaseq})
to deduce that the map $\pi_i(P_k)\to \pi_i(\tilde{P}_k)$ is equivalent to
$\pi_{i+k}(\O_m) \to \pi_{i+k}(\U_m)$ (here $m=16n$ is in the stability range).
For instance the stable maps between homotopy groups induced by the inclusion
$\Sp_m\subset \U_{2m}$ are described in Table 2 (see also Remark
\ref{samesp}).

%\begin{center}
\begin{table}[h]
\begin{tabular}{|c| cccccccc |}
	\hline
$i$ mod 8& 0 &  1& 2 & 3 & 4 & 5 & 6 & 7     \\
	\hline
	$  \pi_i (\Sp)$ & 0 & 0 & 0 & $\bZ$ & $\bZ_2$ & $\bZ_2$& 0 & $\bZ$    \\
	$\pi_i(\U)$& 0 & $\bZ$ & 0 & $\bZ$ & 0 & $\bZ$& 0 & $\bZ$   \\
$ \pi_i(\Sp) \to \pi_i(\U)$ & $0$ & $0$  & 0 & id  & 0 & 0 & 0 & $k\mapsto 2k$  \\
\hline
\end{tabular}
\caption{}
\end{table}
%\end{center}

\subsection{The maps induced by  $\U_m\hookrightarrow \Sp_m$ }\label{lasts}

In the same way as in the previous subsection, we consider the inclusion
map $\U_m\to \Sp_m$ and the maps $g_i^m:\pi_i(\U_m)\to \pi_i(\Sp_m)$
induced between homotopy groups. As in Lemma \ref{equivc}, if we fix $i$ and take any $m$
which is sufficiently larger than $i$, all of these group homomorphisms 
are equivalent in the sense of Definition \ref{letapr}.
Denote by $g_i$ the equivalence class of these maps.
The following result can be proved with the same methods
as Theorem \ref{maintheo}.

\begin{theorem}\label{maitheo} We have $g_{i+8}=g_i$.
\end{theorem}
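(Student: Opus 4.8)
The plan is to mimic the proof of Theorem \ref{maintheo} verbatim, replacing the $\SO\hookrightarrow\U$ inclusion by the $\U\hookrightarrow\Sp$ inclusion and the first commutative diagram (\ref{exactse}) by the second one (\ref{exacts}), together with Theorem \ref{coc} in place of Theorem \ref{co}. Concretely, fix $i>0$ first. Starting from the commutative diagram (\ref{exact}) attached to Theorem \ref{mmain}, apply the homotopy functor to obtain, for each $k\in\{0,1,\ldots,7\}$, a commutative ladder
\begin{equation*}
\vcenter{\xymatrix{
\pi_i(\tilde{P}_{k+1})\ar[d] \ar[r]^{(\tilde{\jmath}_{k+1})_*}&
\pi_i(\Omega(\tilde{P}_k))\ar[d]\ar[r]^{\simeq} &
\pi_{i+1}(\tilde{P}_k)\ar[d]
\\
\pi_i(\bar{P}_{k+1})\ar[r]^{(\bar{\jmath}_{k+1})_*} &
\pi_i(\Omega(\bar{P}_k))\ar[r]^{\simeq} &
\pi_{i+1}(\bar{P}_k)
}}
\end{equation*}
in which, for $n$ divisible by a sufficiently high power of $2$ and $i$ small compared to $n$, both left horizontal maps $(\tilde{\jmath}_{k+1})_*$ and $(\bar{\jmath}_{k+1})_*$ are isomorphisms by the Morse-theoretic input recalled in Subsection \ref{proofbo}. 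Composing these ladders for $k=0,1,\ldots,7$ and using $\tilde{P}_0=\U_{16n}$, $\bar{P}_0=\Sp_{16n}$ gives a commutative square identifying the pair $\bigl(\pi_{i+8}(\U_{16n})\to\pi_{i+8}(\Sp_{16n})\bigr)$, i.e.\ $g^{16n}_{i+8}$, with $\bigl(\pi_i(\tilde{P}_8)\to\pi_i(\bar{P}_8)\bigr)$.

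Next I would invoke Theorem \ref{coc}: the isometry $\chi:\bar{P}_8\to\Sp_n$ restricts to an isometry $\tilde{P}_8\to\U_n$ and intertwines the inclusion $\tilde{P}_8\subset\bar{P}_8$ with the standard $\U_n\subset\Sp_n$. Applying $\pi_i$ yields a commutative square identifying $\bigl(\pi_i(\tilde{P}_8)\to\pi_i(\bar{P}_8)\bigr)$ with $\bigl(\pi_i(\U_n)\to\pi_i(\Sp_n)\bigr)=g^n_i$. Chaining the two identifications shows $g^{16n}_{i+8}\sim g^n_i$ in the sense of Definition \ref{letapr}; since $16n$ and $n$ are both in the stability range once $n$ is large, Lemma \ref{equivc}'s analogue (stated just before Theorem \ref{maitheo}) gives $g_{i+8}=g_i$ as equivalence classes. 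For the boundary case $i=0$, I would simply note $\pi_0(\Sp)=\pi_8(\Sp)=0$ (or directly that $\U_m$ and $\Sp_m$ are connected), so $g_0$ and $g_8$ are both the zero map and hence equal.

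The only real content beyond bookkeeping is making sure each hypothesis of the input theorems is genuinely available here: that Theorem \ref{mmain} supplies the commutative diagram (\ref{exact}) for all $\ell\in\{0,\ldots,7\}$, that the maps $\tilde{\jmath}_{\ell+1}$, $\bar{\jmath}_{\ell+1}$ are indeed isomorphisms on $\pi_i$ in a common stability range for the $\U$- and $\Sp$-Bott chains simultaneously (this is the step where one must quote the Morse-index estimates of \cite{Bo,Mi} for each chain), and that the identification furnished by Theorem \ref{coc} is compatible with the stabilization maps used to define $g_i$. I expect the main obstacle to be this compatibility/stability verification — i.e.\ checking that the value of $n$ for which $\pi_i(\bar{P}_k)\simeq\pi_{i+?}(\bar{P}_0)$ holds can be chosen uniformly and lies in the range where $g^n_i$ already equals its stable value — rather than any new geometric argument; everything else is a diagram chase formally identical to the proof of Theorem \ref{maintheo}. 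For this reason I would present the proof tersely, saying explicitly "the same method as Theorem \ref{maintheo}, using diagram (\ref{exacts}) and Theorem \ref{coc} in place of diagram (\ref{exactse}) and Theorem \ref{co}," and only spell out the $i=0$ case and the stability remark in full.
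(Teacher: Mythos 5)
Your proposal is correct and is exactly the paper's route: the paper proves Theorem \ref{maitheo} by simply stating that it follows "with the same methods as Theorem \ref{maintheo}," i.e.\ the functorial ladder from diagram (\ref{exact}) of Theorem \ref{mmain} together with Theorem \ref{coc} in place of Theorem \ref{co}, which is precisely what you spell out (including the trivial $i=0$ case).
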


Table 3  describes the maps $g_i$ explicitly.

%\begin{center}
\begin{table}[h]
\begin{tabular}{|c| cccccccc |}
	\hline
$i$ mod 8& 0 &  1& 2 & 3 & 4 & 5 & 6 & 7     \\
	\hline
$\pi_i(\U)$& 0 & $\bZ$ & 0 & $\bZ$ & 0 & $\bZ$& 0 & $\bZ$   \\
$  \pi_i (\Sp)$ & 0 & 0 & 0 & $\bZ$ & $\bZ_2$ & $\bZ_2$& 0 & $\bZ$    \\
$ g_i$ & $0$ & $0$ & $0$ & $k\mapsto 2k$ & 0 & $k \mapsto k \ {\rm mod \ } 2$ & 0 & id \\
\hline
\end{tabular}
\caption{}
\end{table}
%\end{center}

\begin{remark}\label{remer} {\rm 
The results in Table 3 have been obtained as direct consequences of  the long exact homotopy sequence of the principal bundle $\U_m \to \Sp_m \to \Sp_m/\U_m$ and
the knowledge of $\pi_i(\Sp/\U)$, $i=0,1,2,\ldots$.
In fact, this long exact sequence can also be used to give an alternative proof of the periodicity of the maps $\pi_i(\U)\to \pi_i(\Sp)$. }
 \end{remark}

 In the same way as Corollary \ref{cor}, we can prove the following result (this time using Table 6 in Appendix \ref{apa} and Subsections \ref{2em} - \ref{8em}).
 
 \begin{corollary}\label{coragain} Let  $A_m\hookrightarrow B_m$ be given by any of the inclusions
 $$\begin{array}{lllll}
 \G_m(\bC^{2m}) \subset \Sp_{2m}/\U_{2m},&
 \U_m\subset \U_{2m}/\O_m,&
\G_m(\bC^{2m})\subset \G_{2m}(\bR^{4m}),&\U_m\subset \O_{2m},
\\
\G_m(\bC^{2m})\subset \O_{4m}/\U_{2m},&
\U_m\subset \U_{2m}/\Sp_m,&
\G_m(\bC^{2m})\subset \G_m(\bH^{2m}).&{}
\end{array}
$$
Then the maps $\pi_i(A_m)\to \pi_i(B_m)$ induced between the stable homotopy groups are
stable relative to $m$ and periodic relative to $i$, with  period equal to 8.
\end{corollary}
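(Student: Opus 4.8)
The plan is to mirror the proof of Corollary~\ref{cor}, replacing throughout the $\SO$-- and $\U$--Bott chains by the $\U$-- and $\Sp$--Bott chains, Theorem~\ref{fmain} by Theorem~\ref{mmain}, and Theorem~\ref{maintheo} by Theorem~\ref{maitheo}. The two ingredients are thus: (i) the identification of each of the seven listed inclusions with one of the reflective inclusions $\tilde{P}_k\subset\bar{P}_k$, $k=1,\ldots,7$, built in Section~\ref{inclubot}; and (ii) the functorial reduction of the induced map on homotopy groups, via the loop-space identification and Bott's Morse-theoretic estimates, to a stable map $g_{i+k}$, whose $8$-periodicity is Theorem~\ref{maitheo}.

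For step (i) I would read off from the diffeomorphism types listed in Subsections~\ref{ubott} and~\ref{spch} that, after replacing $n$ by a sufficiently large multiple and taking $m$ to be $8n$ for $k\in\{1,2\}$, $4n$ for $k\in\{3,4\}$, $2n$ for $k\in\{5,6\}$, and $n$ for $k=7$, the seven inclusions $A_m\subset B_m$ in the statement are precisely $\tilde{P}_k\subset\bar{P}_k$ for $k=1,\ldots,7$, in the same order (so, e.g., $\tilde{P}_1=\G_{8n}(\bC^{16n})\subset\Sp_{16n}/\U_{16n}=\bar{P}_1$ and $\tilde{P}_7=\G_n(\bC^{2n})\subset\G_n(\bH^{2n})=\bar{P}_7$). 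Verifying that these abstractly described standard inclusions of classical symmetric spaces really are, up to isometry, the ones produced by the iterated s-centriole construction is exactly the content of Table~6 and Subsections~\ref{2em}--\ref{8em} of Appendix~\ref{apa}; this geometric identification is the only place where genuine work is required, and I would simply invoke it.

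For step (ii), applying $\pi_i(-)$ to the commutative square of Theorem~\ref{mmain} relating $\tilde{\jmath}_{\ell+1}$ and $\bar{\jmath}_{\ell+1}$, and using the canonical identifications $\pi_i(\Omega(\tilde{P}_\ell))=\pi_{i+1}(\tilde{P}_\ell)$ and $\pi_i(\Omega(\bar{P}_\ell))=\pi_{i+1}(\bar{P}_\ell)$, one obtains for each $\ell\in\{0,1,\ldots,7\}$ a commutative diagram of the shape of~(\ref{exaseq}) in which, by the Morse-theoretic input recalled in Subsection~\ref{proofbo}, the two maps induced by $\tilde{\jmath}_{\ell+1}$ and $\bar{\jmath}_{\ell+1}$ are isomorphisms once $i$ is small compared with $n$. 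Composing these diagrams for $\ell=k-1,\ldots,0$ shows that, in the stable range, the map $\pi_i(\tilde{P}_k)\to\pi_i(\bar{P}_k)$ is equivalent, in the sense of Definition~\ref{letapr}, to $\pi_{i+k}(\U_{16n})\to\pi_{i+k}(\Sp_{16n})$, i.e.\ to $g_{i+k}$. Hence the stable map $\pi_i(A_m)\to\pi_i(B_m)$ is independent of $m$ and represents $g_{i+k}$, and Theorem~\ref{maitheo} gives $g_{i+k}=g_{(i+8)+k}$, which is the asserted $8$-periodicity in $i$. The finitely many values of $i$ lying outside the Bott-stable range for one of the factors (in practice $i\equiv 0\pmod 8$, where a Grassmannian factor needs a direct treatment, exactly as for $\O_{2m}/\U_m\subset\G_m(\bC^{2m})$ in Corollary~\ref{cor}) are disposed of elementarily, using that the relevant homotopy group vanishes there.

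I expect the homotopy-theoretic half to be entirely routine given Theorem~\ref{maitheo}; the real obstacle, as in the $\O\subset\U$ case, is the geometric bookkeeping of step (i), namely matching each listed standard inclusion with the corresponding reflective inclusion $\tilde{P}_k\subset\bar{P}_k$ obtained from the centriole construction.
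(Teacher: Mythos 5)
Your proposal is correct and follows essentially the same route as the paper: identify each listed inclusion with one of the reflective inclusions $\tilde{P}_k\subset\bar{P}_k$ via Table 6 and Subsections \ref{2em}--\ref{8em}, then use the commutative diagram from Theorem \ref{mmain} together with the Bott-stable isomorphisms (as in the diagram (\ref{exaseq})) to reduce $\pi_i(\tilde{P}_k)\to\pi_i(\bar{P}_k)$ to $g_{i+k}$, and conclude by Theorem \ref{maitheo}. Your bookkeeping of the indices ($m=8n,4n,2n,n$ for $k=1,2,\ldots,7$) matches the paper's identifications, and the low-degree/$\pi_0$ caveat you flag is handled exactly as the paper handles the analogous point in Corollary \ref{cor}.
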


These maps $\pi_i(A_m)\to \pi_i(B_m)$ mentioned above can be described explicitly, by using Table 3 and the fact that
$\pi_i(\tilde{P}_k)\to \pi_i(\bar{P}_k)$ is equivalent to
$\pi_{i+k}(\U_m) \to \pi_{i+k}(\Sp_m)$.
 For example, the stable maps $\pi_i(\U_m)\to \pi_i(\O_{2m})$  are described in Table 4.
 \begin{center}
\begin{table}[h]
\begin{tabular}{|c| cccccccc |}
	\hline
$i$ mod 8& 0 &  1& 2 & 3 & 4 & 5 & 6 & 7     \\
	\hline
	$\pi_i(\U)$& 0 & $\bZ$ & 0 & $\bZ$ & 0 & $\bZ$& 0 & $\bZ$   \\
	$\pi_i(\O)$& $\bZ_2$ & $\bZ_2$ & 0 & $\bZ$& 0 & 0 & 0 & $\bZ$   \\
$ \pi_i(\U) \to \pi_i(\O)$ & $0$ & $k\mapsto k$ mod $2$ & $0$ & id & 0 & 0 & 0 & $k\mapsto 2k$  \\
\hline
\end{tabular}
\caption{}
\end{table}
\end{center}

\appendix

\section{Standard inclusions of symmetric spaces}\label{apa}

Explicit descriptions of the spaces in the $\SO$-Bott chain have been obtained by
Milnor in \cite[Section 24]{Mi} using orthogonal complex structures of $\bR^{16n}$,
i.e.~elements $J$ of $\O_{16n}$ with the property that $J^2=-I$.
 A similar construction works for any matrix Lie group, as it has been pointed out in \cite{Qu}. For our needs, we describe the $\U$-Bott chain in these terms. We start with the following definition.

\begin{definition}\label{wat} An element $J\in \U_{16n}$ is a  {\rm complex structure} if  $J^2=-I$.
\end{definition}

Like in the case of  the orthogonal group (see \cite[Lemma 24.1]{Mi}) we can   identify
complex structures in $\U_{16n}$ with midpoints of shortest  geodesic segments in $\U_{16n}$ from $I$ to $-I$.
More specifically, recall from Subsection \ref{poled} that $-I$ is a pole of $(\U_{16n}, I)$ and
 the space of shortest geodesic segments in $\U_{16n}$ from
$I$ to $-I$ is the union of all conjugacy orbits $\U_{16n}.\gamma_k|_{[0,1]}$, $0\le k \le 16n$ (see  Equation (\ref{gamak})).

\begin{lemma}\label{theset}  The set of all midpoints of the geodesic segments in the union 
$\bigcup_{0\le k \le 16n} \U_{16n}.\gamma_k|_{[0,1]}$ 
coincides with the set of all complex structures in $\U_{2q}$.
\end{lemma}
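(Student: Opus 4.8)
The plan is to prove a double inclusion of sets in $\U_{16n}$ (we write $q=8n$, so $\U_{16n}=\U_{2q}$). For the easy inclusion, I would start with an arbitrary midpoint $m=\gamma\left(\frac{1}{2}\right)$ of a shortest geodesic segment $\gamma$ from $I$ to $-I$. By the description recalled just before the statement (from \cite[Section 23]{Mi}), $\gamma$ is $\U_{2q}$-conjugate to some $\gamma_k$ restricted to $[0,1]$, hence $m$ is conjugate to the matrix
$\left(\begin{array}{cc} iI_k & 0\\ 0 & -iI_{2q-k}\end{array}\right)$,
whose square is $-I$. Since the property $J^2=-I$ is preserved under conjugation, $m$ is a complex structure. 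This takes care of one inclusion.

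\textbf{The converse inclusion.} Here I would take an arbitrary complex structure $J\in \U_{2q}$, i.e.~$J\in\U_{2q}$ with $J^2=-I$, and produce a shortest geodesic segment from $I$ to $-I$ whose midpoint is $J$. The key observation is that since $J^2=-I$, the eigenvalues of $J$ are among $\{i,-i\}$; because $J$ is unitary it is diagonalizable, so $J$ is $\U_{2q}$-conjugate to $\left(\begin{array}{cc} iI_k & 0\\ 0 & -iI_{2q-k}\end{array}\right)$ for some $0\le k\le 2q$, where $k$ is the multiplicity of the eigenvalue $i$. Conjugating the $1$-parameter subgroup $\gamma_k$ of Equation (\ref{gamak}) by the same unitary matrix that conjugates $J$ into this normal form, I obtain a path in $\bigcup_{0\le k\le 2q}\U_{2q}.\gamma_k|_{[0,1]}$ which is a shortest geodesic segment from $I$ to $-I$ (shortest because each $\gamma_k|_{[0,1]}$ is, by \cite[Section 23]{Mi}, and conjugation by an isometry preserves length and the ``shortest'' property) and whose value at $t=\frac12$ is exactly $J$, since $\gamma_k\left(\frac12\right)=\left(\begin{array}{cc} iI_k & 0\\ 0 & -iI_{2q-k}\end{array}\right)$ as computed in Subsection \ref{poled}.

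\textbf{Main obstacle.} The only genuinely substantive point is the spectral normal form for a unitary complex structure: that $J^2=-I$ together with unitarity forces $J$ to be conjugate, \emph{by a unitary matrix}, to a block diagonal matrix $\mathrm{diag}(iI_k,-iI_{2q-k})$. This is elementary linear algebra (normal operators are unitarily diagonalizable, and the eigenvalues must satisfy $\lambda^2=-1$), but it is the step that actually uses the structure of $\U_{2q}$ rather than being purely formal. Everything else is bookkeeping: matching the conjugating unitary on both sides so that the midpoint of the conjugated $\gamma_k$ lands precisely on $J$, and invoking the already-quoted fact from \cite[Section 23]{Mi} that the segments $\gamma_k|_{[0,1]}$ and their $\U_{2q}$-conjugates exhaust the shortest geodesic segments from $I$ to $-I$. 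I would close by noting that the value of $k$ is uniquely determined by $J$ as the dimension of the $i$-eigenspace, which incidentally shows how the complex structures are partitioned among the various conjugacy classes $\U_{2q}.\gamma_k\left(\frac12\right)$.
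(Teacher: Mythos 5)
Your proof is correct and follows essentially the same route as the paper: the forward inclusion via the explicit midpoint of (a conjugate of) $\gamma_k$ squaring to $-I$, and the converse via unitary diagonalization of $J$ with eigenvalues $\pm i$, conjugating $\gamma_k$ back to produce the desired segment. The paper's only cosmetic difference is that it gets the forward inclusion by noting $\gamma$ is a one-parameter subgroup, so $\gamma\left(\frac{1}{2}\right)^2=\gamma(1)=-I$, rather than computing the midpoint in normal form.
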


\begin{proof} 
Let $\gamma|_{[0,1]}: [0,1]\to \U_{16n}$ be a   geodesic segment in the union above: it satisfies $\gamma(0)=I$ and $\gamma(1)=-I$.
Then $\gamma :  \bR \to \U_{16n}$ is a one-parameter subgroup.
Thus we have
$$\gamma\left(\frac{1}{2}\right)^2=\gamma(1)=-I.$$
This means that $\gamma\left(\frac{1}{2}\right)$ is a complex structure.
To prove the converse inclusion, take $J\in \U_{16n}$ such that $J^2=-I$.
Then the eigenvalues of $J$ are $\pm i$, hence $J$ is $\U_{16n}$-conjugate to
a matrix of the form
$$\left(%
\begin{array}{ccccccc}
    i I_k& 0\\
0 & - i I_{16n-k}
\end{array}%
\right)
 $$
 for some $k\in \{0,1,\ldots, 16n\}$. The converse inclusion is proved.
\end{proof}

Recall from Subsection \ref{ubott} that $\tilde{P}_1$ is the top-dimensional
 s-centriole of $(\U_{16n}, I)$ and $J_1$ is an element of $\tilde{P}_1$.
The previous lemma says that the union of all  s-centrioles in $\U_{16n}$ from
$I$ to $-I$ is the same as the set of all complex structures in $\U_{16n}$.
We deduce that
\begin{equation} \label{was} \tilde{P}_1=\{J \in \U_{16n} \ : \ J^2=-I\}_{J_1},
\end{equation}
where we have used the notation established at the beginning of Subsection \ref{fixedpoints}.

 We saw afterwards that we have the isometry $\tilde{P}_1\simeq \G_{8n}(\bC^{16n})$,
where $\tilde{P}_1$ is equipped with the metric induced by its embedding
in $\U_{16n}$ and $\G_{8n}(\bC^{16n})$ with  the usual symmetric space metric.
We   defined $\tilde{P}_2$ as the (unique)  s-centriole of
$(\tilde{P}_1,J_1)$ relative to $-J_1$ and then we  fixed an element $J_2$ of $\tilde{P}_2$.

\begin{lemma}\label{commute}
The centrosome $C_{-J_1}(\tilde{P}_1,J_1)$ can be expressed as:
\begin{align}C_{-J_1}(\tilde{P}_1,J_1) =\{J \in \tilde{P}_1  \ : \   JJ_1=-J_1J\}.\label{jjj}
\end{align}
Consequently,
$\tilde{P}_2=\{J \in \tilde{P}_1  \ : \   JJ_1=-J_1J\}_{J_2}.$
\end{lemma}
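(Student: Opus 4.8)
The plan is to prove the matrix description (\ref{jjj}) of the centrosome by combining a geodesic-symmetry argument for one inclusion with an explicit block computation, matched against the s-centriole already computed in Subsection \ref{poleun}, for the other. First I would reduce to a convenient base point: since $\tilde{P}_1$ is a single conjugacy orbit in $\U_{16n}$, namely the $\U_{16n}$-conjugacy class of the matrix $A_{8n}$ of Equation (\ref{aq}), there is $B\in\U_{16n}$ with $J_1=BA_{8n}B^{-1}$. Conjugation by $B$ is an isometry of $\U_{16n}$ that preserves $\tilde{P}_1$, sends $A_{8n}$ to $J_1$ and $-A_{8n}$ to $-J_1$, hence carries $C_{-A_{8n}}(\tilde{P}_1,A_{8n})$ onto $C_{-J_1}(\tilde{P}_1,J_1)$ and the set $\{J\in\tilde{P}_1 : JA_{8n}=-A_{8n}J\}$ onto $\{J\in\tilde{P}_1 : JJ_1=-J_1J\}$. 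So it suffices to treat $J_1=A_{8n}$.

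For the inclusion ``$\subseteq$'', let $J$ be the midpoint of \emph{any} geodesic $\gamma$ in $\tilde{P}_1$ with $\gamma(0)=J_1$ and $\gamma(1)=-J_1$. The geodesic symmetry $s^{\tilde{P}_1}_J$ reverses $\gamma$ through its midpoint, so $s^{\tilde{P}_1}_J(J_1)=-J_1$. By Lemma \ref{tg}, $\tilde{P}_1$ is totally geodesic in $\U_{16n}$, hence $s^{\tilde{P}_1}_J$ is the restriction of $s^{\U_{16n}}_J$, which by Example \ref{anyc} is the map $X\mapsto JX^{-1}J$. Since $J$ and $J_1$ lie in $\tilde{P}_1$, they are complex structures (Equation (\ref{was})), so $J^{-1}=-J$ and $J_1^{-1}=-J_1$; feeding this into $JJ_1^{-1}J=-J_1$ and simplifying gives $JJ_1=-J_1J$. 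As this applies to every geodesic from $J_1$ to $-J_1$, not only the shortest ones, we obtain $C_{-J_1}(\tilde{P}_1,J_1)\subseteq\{J\in\tilde{P}_1 : JJ_1=-J_1J\}$.

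For the inclusion ``$\supseteq$'' I would compute the right-hand side explicitly in $8n\times 8n$ block form. Writing $J=\left(\begin{smallmatrix}P&Q\\R&S\end{smallmatrix}\right)$, the relation $JA_{8n}=-A_{8n}J$ forces $P=S=0$; then $J\in\U_{16n}$ forces $Q,R\in\U_{8n}$, and $J^2=-I$ forces $R=-Q^{-1}$. Thus $\{J\in\tilde{P}_1 : JJ_1=-J_1J\}$ consists precisely of the matrices $\left(\begin{smallmatrix}0&-C^{-1}\\C&0\end{smallmatrix}\right)$ with $C\in\U_{8n}$, each of which automatically has trace $0$ and square $-I$ and therefore lies in the component $\tilde{P}_1$. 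But this is exactly the $K_e$-orbit determined in Subsection \ref{poleun}, i.e.\ the unique s-centriole $\tilde{P}_2$ of $(\tilde{P}_1,J_1)$ relative to $-J_1$, which is contained in the centrosome $C_{-J_1}(\tilde{P}_1,J_1)$. This yields ``$\supseteq$'', hence (\ref{jjj}). The concluding assertion is then immediate: by construction (Subsections \ref{ubott} and \ref{poleun}) $\tilde{P}_2$ is the connected component of the centrosome containing $J_2$, so $\tilde{P}_2=\{J\in\tilde{P}_1 : JJ_1=-J_1J\}_{J_2}$; in fact it is the whole set, which is connected as a continuous image of $\U_{8n}$.

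I do not expect a serious obstacle. The two points that need a little attention are verifying that the off-diagonal matrices above really land in the correct connected component $\tilde{P}_1$ (the trace-zero observation) and noticing that the ``$\subseteq$'' argument covers arbitrary, not merely shortest, geodesics; together these squeeze the centrosome between the s-centriole $\tilde{P}_2$ and the anticommutator set, forcing all three to coincide and the centrosome to be connected.
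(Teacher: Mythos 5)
Your argument is correct, but it takes a genuinely different route from the paper's. The paper proves both inclusions intrinsically by writing any geodesic of $\tilde{P}_1$ through $J_1$ as $\gamma(t)=J_1\exp(2tx)$ (using that $\tilde{P}_1$ is totally geodesic in $\U_{16n}$) and manipulating $\exp(x)$ algebraically: for ``$\subseteq$'' it derives $\exp(x)=-J_1J$ and evaluates $\gamma(1)$, and for ``$\supseteq$'' it takes an arbitrary anticommuting $J\in\tilde{P}_1$, runs a geodesic from $J_1$ through $J$ at $t=\tfrac12$, and checks that $\gamma(1)=-J_1$. You instead prove ``$\subseteq$'' via the geodesic symmetry at the midpoint, $s_J(J_1)=-J_1$ combined with $s_J(X)=JX^{-1}J$ and $J^2=J_1^2=-I$ (legitimate here, since Equation (\ref{was}) is available before the lemma), and you prove ``$\supseteq$'' not by producing a geodesic at all but by reducing to $J_1=A_{8n}$ via conjugation, computing the anticommutator set in block form as $\bigl\{\bigl(\begin{smallmatrix}0&-C^{-1}\\ C&0\end{smallmatrix}\bigr) : C\in\U_{8n}\bigr\}$, and matching it with the $K_e$-orbit already identified as the s-centriole in Subsection \ref{poleun}. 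Your version buys a slightly stronger conclusion for free (the centrosome is connected and coincides with $\tilde{P}_2$, so passing to the component of $J_2$ is unnecessary), and the symmetry argument for ``$\subseteq$'' is clean and visibly covers non-shortest geodesics; the small checks you flag (trace zero to land in the right component, equivariance under the conjugation used in the reduction) are indeed the points needing care, and they go through. What the paper's computation buys is independence from the explicit orbit description: the same exponential argument is reused verbatim for the higher stages $\tilde{P}_k\subset\tilde{P}_{k-1}$, $k\ge 3$, where no analogue of the Subsection \ref{poleun} block computation is recorded, whereas your ``$\supseteq$'' step as written is tied to the specific identification of $\tilde{P}_2$ inside $\U_{16n}$.
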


\begin{proof}
We first take  $J=\gamma(\frac{1}{2})$, where $\gamma: \bR \to \tilde{P}_1$
is a geodesic  such that
$$\gamma(0)=J_1 \quad {\rm and} \quad \ \gamma(1)=-J_1.$$
But $\tilde{P}_1$ is a totally geodesic submanifold of $\U_{16n}$, thus
$\gamma$ is a geodesic in $\U_{16n}$. We deduce that there exists
$x$ in $\u_{16n}$
such that
$$\gamma(t)=J_1\exp(2tx), \ t\in \bR.$$
The condition $\gamma\left(\frac{1}{2}\right)=J$ implies that
$J_1\exp(x)=J.$ Multiplying from the left by $J_1$  and
taking into account that $J_1^2=-I$ gives
$\exp(x)=-J_1J.$
Consequently, we have
$$-J_1=\gamma(1)=J_1\exp(x)\exp(x)
= J(-J_1J)=-JJ_1J.$$
This implies that $JJ_1=-J_1J$.

We now prove the converse inclusion.
Take $J\in \tilde{P}_1$ such that $JJ_1=-J_1J$.
Let $\gamma : \bR \to \tilde{P}_1$ be a geodesic with the property that
$$\gamma(0)= J_1 \quad {\rm and} \quad \gamma\left(\frac{1}{2}\right)=J.$$

\noindent {\it Claim.} $\gamma(1)=-J_1$.

Indeed, the curve $J_1^{-1}\gamma$ is a geodesic in $\U_{16n}$, thus a
one-parameter group. In other words, we have
$$J_1^{-1}\gamma(t)=\exp(2tx), \ t\in \bR,$$
where $x\in \u_{16n}$.
This implies that
$J=\gamma(\frac{1}{2}) = J_1 \exp(x)$, hence
$\exp(x)=J_1^{-1}J=-J_1J=JJ_1,$
and consequently
$$\gamma(1)= J_1\exp(2x)=J_1\exp(x)\exp(x)
= J(JJ_1)=-J_1.$$
\end{proof}

Note that the previous two results are special cases of
\cite[Lemmata 3.1 and 3.2]{Qu}.

We have the isometry $\tilde{P}_2 \simeq \U_{8n}$, where $\tilde{P}_2$ has the
submanifold metric and $\U_{8n}$ the bi-invariant metric described by Equation
(\ref{xyz}) with $q=4n$ (see Subsection \ref{poleun}).
The pair $(\tilde{P}_2, J_2)$ has exactly one
pole, which is $-J_2$.
We defined $\tilde{P}_3$ as the top-dimensional  s-centriole of
$(\tilde{P}_2,J_2)$ relative to $-J_2$ and we  fixed $J_3\in \tilde{P}_3$.
Since $\tilde{P}_2$ is a totally geodesic submanifold of $\U_{16n}$,
we can use  the same reasoning as in the proof of Lemma \ref{commute}
to show that
$$\tilde{P}_3=\{J \in \tilde{P}_2 \ : \ JJ_2=-J_2J\}_{J_3}.$$
In the same way, for any $k\in \{2, \ldots, 8\}$ we have
$$\tilde{P}_k=\{J\in \tilde{P}_{k-1} \ : \ JJ_{k-1}=-J_{k-1}J\}_{J_k}.$$

These new presentations of the spaces $\tilde{P}_1, \ldots, \tilde{P}_8$, along with those obtained by Milnor in  \cite[Section 24]{Mi} for the spaces  $P_1, \ldots, P_8$  lead us to descriptions of the embeddings between Bott chains which are
discussed in Section \ref{inclubot}. Concretely, they are given by Tables
5 and 6  together with the list \ref{1e} - \ref{8em}.

\begin{table}[h]\label{tal1}
\begin{tabular}{|l| ll|l |}
	\hline
	$k$ & $P_k$ & $\tilde{P}_k$ &$P_k\subset \tilde{P}_k$\\	\hline
		\hline
0& $\SO_{16n}$&$\U_{16n}$  &  \ref{1e}  composed with $\SO_{16n}\subset \O_{16n}$   \\
	\hline
1& $\SO_{16n}/\U_{8n}$&$\G_{8n}(\bC^{16n})$  &  \ref{2e} composed with $\SO_{16n}/\U_{8n}\subset  \O_{16n}/\U_{8n}$   \\
	\hline
2& $\U_{8n}/\Sp_{4n}$&$\U_{8n}$  &  \ref{3e}     \\
	\hline
3& $\G_{2n}(\bH^{4n})$&$\G_{4n}(\bC^{8n})$  &  \ref{4e}     \\
	\hline	
4& $\Sp_{2n}$&$\U_{4n}$  &  \ref{5e}     \\
	\hline	
5& $\Sp_{2n}/\U_{2n}$&$\G_{2n}(\bC^{4n})$  &  \ref{6e}     \\
	\hline	
6& $\U_{2n}/\O_{2n}$&$\U_{2n}$  &  \ref{7e}     \\
	\hline	
7& $\G_n(\bR^{2n})$&$\G_{n}(\bC^{2n})$  &  \ref{8e}     \\
\hline
8& $\SO_{n}$&$\U_{n}$  &  \ref{1e}  composed with $\SO_{n}\subset \O_{n}$   \\
	\hline	
\end{tabular} 
\caption{}
\end{table}
\begin{table}[h]\label{tal2}
\begin{tabular}{|l| ll|l |}
	\hline
	$k$ & $\tilde{P}_k$ & $\bar{P}_k$ &$\tilde{P}_k\subset \bar{P}_k$\\	\hline
		\hline
0& $\U_{16n}$&$\Sp_{16n}$  &  \ref{1em}     \\
	\hline
1& $\G_{8n}(\bC^{16n})$&$\Sp_{16n}/\U_{16n}$  &  \ref{2em}     \\
	\hline
2& $\U_{8n}$&$\U_{16n}/\O_{16n}$  &  \ref{3em}     \\
	\hline
3& $\G_{4n}(\bC^{8n})$&$\G_{8n}(\bR^{16n})$  &  \ref{4em}     \\
	\hline	
4& $ \U_{4n}$&$\SO_{8n}$  &  \ref{5em}     \\
	\hline	
5& $\G_{2n}(\bC^{4n})$&$\SO_{8n}/\U_{4n}$  &  \ref{6em}     \\
	\hline	
6& $\U_{2n}$&$\U_{4n}/\Sp_{2n}$  &  \ref{7em}     \\
	\hline	
7& $\G_n(\bC^{2n})$&$\G_n(\bH^{2n})$  &  \ref{8em}     \\
		\hline	
8& $\U_{n}$&$\Sp_{n}$  &  \ref{1em}     \\
	\hline		
\end{tabular}
\caption{}
\end{table}

In what follows we will be frequently using, without pointing it out each time,  presentations of  certain classical symmetric  spaces as  given in   \cite[Section 24]{Mi} (see also \cite[Section 19]{Es}).

\subsection{The inclusion $\O_r\subset \U_r$}\label{1e}
To any orthogonal isomorphism $A:\bR^r\to \bR^r$ one attaches its complex-linear extension $A^c:\bC^r\to \bC^r$, which is defined by
$A^c(u+iv):=A(u)+iA(v)$, for all $u,v\in \bR^r$.
One can see that ${A}^c$ preserves the norm of a vector in $\bC^r$ with respect to the
canonical Hermitian product. Thus, $A^c$ lies in $\U_{r}$.

\subsection{The inclusion $\O_{2r}/\U_r\subset \G_r(\bC^{2r})$}\label{2e}
The quotient $\O_{2r}/\U_r$ is identified with the space of  all orthogonal complex structures of $\bR^{2r}$, that is, of
all $J\in \O_{2r}$ with the property that $J^2=-I$. The inclusion
 $\O_{2r}/\U_r\subset \G_r(\bC^{2r})$ assigns to any such $J$ the eigenspace
 $E_i(J^c)=\{v\in \bC^{2r} \ : \ J^c(v)=iv\}$.

 \subsection{The inclusion $\U_{2r}/\Sp_r \subset \U_{2r}$}\label{3e}
 Fix $J_0\in \O_{4r}$ an orthogonal complex structure of $\bR^{4r}$.
 Let $J_0^c: \bC^{4r}\to \bC^{4r}$ be its complex linear extension.
The eigenspaces $V^+:=E_i(J_0^c)$ and $V^-:=E_{-i}(J_0^c)$ 
 are complex vector subspaces of $\bC^{4r}$ of dimension equal to $2r$, since the complex conjugation is an   ($\bR$-linear) isomorphism between $V^+$ and $V^-$. 
The quotient
$\U_{2r}/\Sp_r$ can be identified with the space of all orthogonal complex structures
$J$ of $\bR^{4r}$ that anticommute with $J_0$.
The complex linear extension $J^c$ of such a $J$ maps $V^+$ to 
$V^-$, being obviously a unitary isomorphism.
The inclusion $\U_{2r}/\Sp_r\hookrightarrow \U_{2r}$ assigns to  $J$ the map
$J^c|_{V^+}:V^+\to V^-$, where both $V^+$ and $V^-$ are identified
with $\bC^{2r}$.

\subsection{The inclusion $\G_r(\bH^{2r})\subset \G_{2r}(\bC^{4r})$}\label{4e}
We first identify $\bC$ with the subspace of  $\bH$ consisting of all
quaternions $a+bi+cj+dk$ with $c=d=0$.
This allows us to equip $\bH^{2r}$ with the structure of complex vector space induced by multiplication with complex numbers from the right.  It also allows us to embed
$\bC^{2r}$ into $\bH^{2r}$. 
In this way we obtain the following identification of complex vector spaces: $\bH^{2r}=\bC^{2r}\oplus j \bC^{2r} =\bC^{4r}$.
The Grassmannian $\G_r(\bH^{2r})$ consists of all  right $\bH$-submodules 
of $\bH^{2r}$ of dimension equal to $r$. 
The map
$\G_r(\bH^{2r})\hookrightarrow \G_{2r}(\bC^{4r})$ attaches to any such  submodule 
$V\subset \bH^{2r}$ the space $V$ itself, regarded as a
$2r$-dimensional complex subspace of $\bC^{4r}$.

\subsection{The inclusion $\Sp_r\subset \U_{2r}$}\label{5e}
As explained before, we can regard $\bH^r=\bC^r \oplus j \bC^r=\bC^{2r}$
as a complex vector spaces. The map $\Sp_r\hookrightarrow \U_{2r}$
assigns to any symplectic ($\bH$-linear on the right) isomorphism $A:\bH^r\to \bH^r$
  the map $A$ itself, regarded as a unitary ($\bC$-linear) isomorphism $\bC^{2r}\to \bC^{2r}$.
 A description of this embedding in matrix form can be found for instance
\cite[Ch.~I, Section 1.11]{Br-tD} (see also the beginning of Section \ref{inclubot}).

  \subsection{The inclusion $\Sp_r/\U_r\subset \G_r(\bC^{2r})$}\label{6e}
  The quotient $\Sp_r/\U_r$ can be identified with the space of all complex forms of the quaternionic space $\bH^r$, that is, all $V\subset \bH^r$ which is a complex vector subspace relative to the identification
  $\bH^r=\bC^r+j\bC^r=\bC^{2r}$ mentioned above and satisfies $\bH^r=V\oplus jV$.
  The inclusion map $\Sp_r/\U_r\hookrightarrow \G_r(\bC^{2r})$ assigns to any
  such $V$ the space $V$ itself.

  \subsection{The inclusion $\U_r/\O_r\subset \U_r$}\label{7e}
The quotient $\U_r/\O_r$ can be identified with the space of all real forms of $\bC^r$, that is, all
real vector subspaces $V\subset \bC^r$ such that $\bC^r=V\oplus iV$.
This can be further identified with the space of all orthogonal ($\bR$-linear) automorphisms
of $\bC^r=\bR^{2r}$ that are anti-complex linear and square to $I$:
the identification is given by attaching to    such an automorphism
its 1-eigenspace.
If we fix an anti-complex linear orthogonal automorphism $B_0$ of $\bR^{2n}$, then
$$\U_r/\O_r=\{B_0A \ : \ A\in \U_r, (B_0A)^2=I\}.$$
The inclusion map $\U_r/\O_r \to \U_r$ maps $B_0A$ to $A$.

    \subsection{The inclusion $\G_r(\bR^{2r}) \subset \G_r(\bC^{2r})$}\label{8e}
  This map assigns to any $r$-dimensional real vector subspace of $\bR^{2r}$ the space $V\otimes \bC$, which is an $r$-dimensional complex vector subspace of $\bC^{2r}$.

\subsection{The inclusion $\U_r\subset \Sp_r$}\label{1em}
Let $R_i$ and $R_j$ be the maps $\bH^r \to \bH^r$ given by multiplication from the right by the quaternionic units $i$ and $j$.
Then $\Sp_{r}$ can be characterized as the space of all $\bR$-linear endomorphisms of $\bH^r$ which commute with $R_i$ and $R_j$ and preserve the norm of any vector
in $\bH^r$ relative to the canonical inner product
of $\bH^{r}$.  Let us consider the splitting
$\bH^{r} = \bC^{r}\oplus j \bC^{r} .$
The group $\U_{r}$ consists of all $\bR$-linear endomorphisms of
$\bC^{r}$ which commute with $R_i$ and  preserve the norm of any vector
in $\bC^r$ relative to the canonical Hermitian product of $\bC^{r}$.
The desired embedding $\U_{r} \hookrightarrow \Sp_{r}$ is given by
\begin{equation*}\U_{r}\ni A \mapsto A^h\in \Sp_{r},\end{equation*}
where  $A^h: \bH^{r} \to \bH^{r}$ is
determined by:
$$A^h(v+jw):=Av+j(\bar{A}w), \quad v,w\in \bC^r.$$
(One can easily verify that ${A}^h$ lies in $\Sp_{r}$.)

\subsection{The inclusion  $\G_r(\bC^{2r})\subset \Sp_{2r}/\U_{2r}$}\label{2em}
The quotient $\Sp_{2r}/\U_{2r}$ can be identified with the space of all complex forms of $\bH^{2r}$,
that is,
of all  real vector subspace $X \subset \bH^{2r}$ with the  property that
 $R_iX=X$, i.e. $X$ is a complex vector subspace of $\bH^{2r}$,
and  $\bH^{2r}=X \oplus R_jX$ (orthogonal direct sum).
The inclusion $\G_r(\bC^{2r})\hookrightarrow \Sp_{2r}/\U_{2r}$ assigns to the $r$-dimensional complex vector subspace $V \subset \bC^{2r}$  the space
$V\oplus R_j V^\perp$, where $V^\perp$ is the orthogonal complement of $V$ in
$\bC^{2r}$.

\subsection{The inclusion $\U_{r}\subset \U_{2r}/\O_{2r}$}\label{3em}
Recall that $\U_{2r}/\O_{2r}$ is the space of all real forms of $\bC^{2r}$
(see Subsection \ref{7e}). Also recall that $\bH^r$ is a complex vector space
relative to multiplication by complex numbers from the right, the dimension being equal to $2r$.
Let us now consider  the splitting
$\bH^{r}=\bC^r\oplus R_j \bC^r$.
 The inclusion $\U_{r}\hookrightarrow \U_{2r}/\O_{2r}$ can be described as follows:
$$\U_r\ni A\mapsto V:=\{ v+R_j Av \ : \ v\in \bC^r\}.$$
 Note that $V$  described by this equation is a real form  of $\bH^r$, where the latter is a complex vector space in the way mentioned above. Indeed, this follows
readily from the fact that $Vi=\{ v -R_jAv \ : \ v\in \bC^r\}.$

 \subsection{The inclusion $\G_r(\bC^{2r})\subset\G_{2r}(\bR^{4r})$}\label{4em}
This map
assigns to a complex $n$-dimensional vector subspace
$V\subset \bC^{2r}$ the space $V$ itself, viewed as a real vector subspace of
$\bC^{2r}=\bR^{4r}$.

\subsection{The inclusion $\U_{r}\subset \SO_{2r}$.}\label{5em}
We identify  $\bC^r=\bR^r\oplus i\bR^r$ with $\bR^{2r}$ and  make  the following elementary  observations:
 a $\bC$-linear transformation of $\bC^{r}$ is also $\bR$-linear;
 the norm of a vector in $\bC^{r}$ relative to the standard Hermitian inner product 
 is equal to its norm in $\bR^{2r}$ relative to the standard Euclidean inner product.
We are lead to the subgroup embedding $\U_r \hookrightarrow \O_{2r}$. Since
 $\U_r$ is connected, we actually get $\U_r  \hookrightarrow \SO_{2r}$.

\subsection{The inclusion $\G_r(\bC^{2r})\subset \SO_{4r}/\U_{2r}$}\label{6em}
We start with the embedding $\U_{2r}\subset \SO_{4r}$ described in Subsection
\ref{5em}.
It induces the inclusion $\{J\in \U_{2r} \ : \ J^2=-I\}\subset \{J\in \SO_{4r} \ : \ J^2=-I\}$.
The first space can be identified with  the Grassmannian of all complex vector subspaces in
$\bC^{2r}$ (see Lemma \ref{theset} and Subsection \ref{poled}). Among its connected components we can find $\G_{r}(\bC^{2r})$.
This is contained in one of the two connected components of
$ \{J\in \SO_{4r} \ : \ J^2=-I\}$. They are both diffeomorphic to $\SO_{4r}/\U_{2r}$.
The desired embedding is now clear.

\subsection{The inclusion $\U_{r}\subset \U_{2r}/\Sp_{r}$}\label{7em}
We first consider $$A_r:=
\left(%
\begin{array}{ccccccc}
iI & 0 \\
0 & -iI \end{array}%
\right)\in \U_{2r},$$
which is an orthogonal complex structure of $\bR^{4r}$ via the embedding
described at \ref{5em}.
 Denote by $\U(\bR^{4r}, A_r)$ the set of all elements of $\O_{4r}$ which
  commute with $A_r$.
  This is a subgroup of $\O_{4r}$ which is isomorphic to
  %\footnote{First, $J_0$ allows us to (re)identify $\bR^{4r}$ with $\bC^{2r}$. That is, we write $\bR^{4r}=V\oplus J_0V$, where $V:=\{u+J_0u \ : \ u\in \bR^{4r}\}$ and $J_0V=\{u-J_0u \ : \ u\in \bR^{4r}\}$.  The isomorphism is $\bR^{4r}\to \bC^{2r}$, $v+j_0w\mapsto v+iw$, $v, w\in V$.   The main observation is that an element $B\in \O_{4r}$ commutes with $J_0$ if and only if $B\in \U_{2r}$. The reason is that for both the (real) inner product and the Hermitian product on $\bR^{4r}=\bC^{2r}$ we have $\|v+J_0w\|^2=\|v\|^2+\|w\|^2$, $v, w\in V$.} 
  $\U_{2r}$. It  acts transitively, via group conjugation, on the set of all
  $J\in \O_{4r}$ with
  $J^2=-I$ and $A_rJ=-JA_r$. Moreover, 
  the stabilizer of any $J$ is isomorphic to $\Sp_{r}$. In this way we obtain 
 the identification
 $$\{J \in \O_{4r} \ : \ J^2=-I, JA_r=-A_rJ\} = 
 \U_{2r}/\Sp_r.$$ 
 The embedding $\U_r\hookrightarrow \U_{2r}/\Sp_r$ assigns to an arbitrary $X\in \U_{r}$ 
 the matrix
$$ A:=\left(%
\begin{array}{ccccccc}
0 & -X^{-1} \\
X & 0 \end{array}%
\right)\in \U_{2r},
$$
which is regarded as an element of $\O_{4r}$ in the same way as before,
i.e.~by using the embedding \ref{5em}. (One can easily verify that ${A}^2=-I$ and 
$A_r{A}=-{A}A_r$.)

\subsection{The inclusion $\G_r(\bC^{2r}) \subset \G_r(\bH^{2r})$}\label{8em} 
We consider again the embedding $\bC^{2r}\subset \bH^{2r}$ defined in Subsection \ref{4e}.
The embedding $\G_r(\bC^{2r}) \hookrightarrow \G_r(\bH^{2r})$ assigns to  a complex 
$r$-dimensional vector subspace $V\subset \bC^{2r}$ the space
$V\otimes_\bC\bH =\{v+wj \ : \ v,w\in V\},$
  which is an $\bH$-linear subspace of $\bH^{2r}$ of dimension $r$.

\section{The isometry types of $P_4$ and  $P_8$}\label{istype}
For any $r\ge 1$, we consider the {\it standard}  bi-invariant Riemannian metric on
 each of the groups $\SO_r$, $\U_r$, and $\Sp_r$. By definition, they are   given by
 $\langle X, Y\rangle =-{\rm tr}(XY)$,
for any $X,Y$ in the Lie algebra of $\SO_r$, respectively $\U_r$;
as about  $\Sp_r$,  the metric is induced  by  its canonical embedding in $\U_{2r}$, where the latter group is equipped with the  standard metric divided by two
(see also the beginning of Section \ref{inclubot}).

The $\SO$-Bott chain $P_0, P_1, \ldots, P_8$ has been defined in Subsection
\ref{sochain}.
Recall that  $P_1, \ldots, P_8$
are totally geodesic submanifolds of $P_0=\SO_{16n}$, the latter space being equipped with  the standard metric.
The main goal of  this section is to prove the following result.

\begin{proposition}\label{isotype}
 (a) If we equip $P_4$ with the submanifold metric, then  $P_4$ is
 isometric to
    $ \Sp_{2n}$, where the metric on the latter space
    is   eight times the standard one.

 (b) If we equip $P_8$ with the submanifold metric, then $P_8$ is isometric to
 $ \SO_{n}$, where the metric on the latter space
    is  sixteen times the standard one.
\end{proposition}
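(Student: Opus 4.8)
The plan is to establish each part in two steps: first fix the \emph{diffeomorphism} type of $P_4$ (resp.~$P_8$), which is Milnor's and is taken as input from \cite[Section~24]{Mi}, and then determine the \emph{scale} of the Riemannian metric by computing a single numerical invariant --- the distance from a point to a pole.

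For part~(a): $P_4$ is a compact connected totally geodesic submanifold of $\SO_{16n}$ with the metric $\langle X,Y\rangle=-{\rm tr}(XY)$, hence is itself a Riemannian symmetric space, and it is diffeomorphic to $\Sp_{2n}$; since the symmetric space $\Sp_{2n}$ is irreducible and simply connected, it has a unique invariant metric up to a positive scalar, so $P_4$ is isometric to $(\Sp_{2n},c\,g_0)$ for some $c>0$, where $g_0$ is the standard bi-invariant metric (this can also be read off from Milnor's explicit realization of $P_4$). To compute $c$: by Lemma~\ref{0,} one has $-J_4\in P_4$, and since $-J_4$ is the unique pole of $(\SO_{16n},J_4)$ (Example~\ref{anyc}) and $P_4$ is totally geodesic in $\SO_{16n}$, the point $-J_4$ is a pole of $(P_4,J_4)$. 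Now Lemma~\ref{equipe} followed by Lemma~\ref{length} (with $2q=16n$) gives
$${\rm dist}_{P_4}(J_4,-J_4)={\rm dist}_{\SO_{16n}}(I,-I)=\pi\sqrt{16n},$$
whereas under the isometry $P_4\cong(\Sp_{2n},c\,g_0)$ the pair $(J_4,-J_4)$ goes to a point and its pole (unique, since $Z(\Sp_{2n})=\{\pm I\}$), so by homogeneity and Lemma~\ref{length} (with $r=2n$) the same distance equals $\sqrt{c}\cdot{\rm dist}_{(\Sp_{2n},g_0)}(I,-I)=\sqrt{c}\,\pi\sqrt{2n}$. Comparing the two expressions gives $c=8$.

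For part~(b) the point-to-pole argument needs adjusting, since $\SO_n$ has no pole for odd $n$; instead I would transport the scale one step, from $P_7$ to $P_8$, by an explicit centriole computation in the spirit of Subsection~\ref{poleun}. As in~(a), $P_7$ is a symmetric space diffeomorphic to $\G_n(\bR^{2n})$, carrying the standard symmetric structure up to homothety. Realize $\G_n(\bR^{2n})$ as the $\SO_{2n}$-conjugacy orbit of $D={\rm diag}(I_n,-I_n)$ with the metric induced from $(\SO_{2n},-{\rm tr})$, and put $W=\left(\begin{smallmatrix}0&I_n\\-I_n&0\end{smallmatrix}\right)$. A direct computation shows that $t\mapsto\exp(t\tfrac{\pi}{2}W)\,D\,\exp(-t\tfrac{\pi}{2}W)$, $t\in[0,1]$, is a shortest geodesic from $D$ to its pole $-D$, of length $\pi\sqrt{2n}$; comparing with ${\rm dist}_{P_7}(J_7,-J_7)=\pi\sqrt{16n}$ (Lemma~\ref{equipe}) shows that the submanifold metric on $P_7\subset\SO_{16n}$ is $8$ times the metric induced from $(\SO_{2n},-{\rm tr})$. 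The midpoint of that geodesic is $WD=\left(\begin{smallmatrix}0&-I_n\\-I_n&0\end{smallmatrix}\right)$, so by Lemma~\ref{pol} the s-centriole through it is its orbit under $K_e=\SO_n\times\SO_n$, namely $\left\{\left(\begin{smallmatrix}0&-C\\-C^{-1}&0\end{smallmatrix}\right):C\in\SO_n\right\}$; left translation by $WD$ carries this isometrically onto the totally geodesic submanifold $\left\{{\rm diag}(C^{-1},C):C\in\SO_n\right\}$ of $\SO_{2n}$. In the parametrization $C\mapsto{\rm diag}(C^{-1},C)$ the differential at the identity is $X\mapsto{\rm diag}(-X,X)$, so the metric induced from $(\SO_{2n},-{\rm tr})$ pulls back on $\SO_n$ to $2\,g_0$ at the identity; since this pulled-back metric is bi-invariant (the one-sided translations of $\SO_n$ correspond to isometries $X\mapsto{\rm diag}(I,C_0)\,X\,{\rm diag}(C_0^{-1},I)$ of $\SO_{2n}$, and an analogous one on the other side, both preserving the submanifold), it equals $2\,g_0$ everywhere, $n=4$ included. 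Finally, as $P_8\subset P_7$ is totally geodesic, its submanifold metric in $\SO_{16n}$ is the restriction of that of $P_7$, hence $8\cdot(2\,g_0)=16\,g_0$ on $\SO_n$; that is, $P_8$ is isometric to $\SO_n$ with sixteen times the standard metric. (For even $n$ one could alternatively mimic part~(a), using Remark~\ref{sclaar} to get $-J_8\in P_8$ as a pole of $(P_8,J_8)$, once Lemma~\ref{equipe} has been extended by the same method to give ${\rm dist}_{P_8}(J_8,-J_8)=\pi\sqrt{16n}$.)

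I expect the main obstacle to be the centriole computation in part~(b): each ingredient is elementary, but one must use a model of $\G_n(\bR^{2n})$ valid for every $n$ (the conjugation orbit of $D$, viewed inside $\O_{2n}$ when $n$ is odd), correctly identify the shortest geodesic to the pole and its midpoint, check that the orbit is totally geodesic, and --- the delicate point --- verify that after the left translation the resulting metric on $\SO_n$ is genuinely a homothety of the standard bi-invariant metric, which hinges on its bi-invariance. A smaller but real point in part~(a) is the step that $P_4$, a priori only a symmetric space diffeomorphic to the manifold $\Sp_{2n}$, must carry the group structure up to scale; this rests on the irreducibility and simple connectedness of the symmetric space $\Sp_{2n}$, or equivalently on Milnor's explicit identification of $P_4$ with the Lie group $\Sp_{2n}$.
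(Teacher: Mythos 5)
Your two scale computations are correct as computations: in (a) the comparison $\sqrt{c}\,\pi\sqrt{2n}=\pi\sqrt{16n}$ (Lemmata \ref{length} and \ref{equipe}) does give $c=8$, and in (b) the model calculation in the orbit of $D=\mathrm{diag}(I_n,-I_n)$ — the geodesic $t\mapsto D\exp(-t\pi W)$ of length $\pi\sqrt{2n}$, the centriole through its midpoint as the $\SO_n\times\SO_n$-orbit, the left translation onto $\{\mathrm{diag}(C^{-1},C)\}$ and the bi-invariance argument giving $2g_0$ — is sound and yields $8\cdot 2=16$. This is a genuinely different route from the paper: the paper fixes the isometry types by an explicit matrix realization (for a special choice of $J_1,\dots,J_7$, then conjugating), exhibiting $P_4$ and $P_8$ as left-translates of the diagonal embeddings $C\mapsto\mathrm{diag}(C,C^{-1})$ of $\Sp_{2n}$ into $\Sp_{4n}\subset\U_{8n}\subset\SO_{16n}$, respectively of $\SO_n$ into $\Sp_{2n}$, and tracks the metric factor $2$ at each inclusion; you instead pin the scalar by metric invariants (point-to-pole distances and one s-centriole computation in a low-dimensional model).

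The gap is in the identification step on which everything else rests. In (a), "diffeomorphic to $\Sp_{2n}$" plus "unique invariant metric up to scale" does not give "isometric to $(\Sp_{2n},c\,g_0)$": uniqueness of the invariant metric concerns metrics invariant under the standard transitive $\Sp_{2n}\times\Sp_{2n}$ action, so you must first know that the submanifold metric on $P_4$ is invariant under such an action, i.e.\ that Milnor's diffeomorphism intertwines the conjugation action of the centralizer of $J_1,J_2,J_3$ in $\SO_{16n}$ (a copy of $\Sp_{2n}\times\Sp_{2n}$) with the two-sided action on the group; supplying this equivariance is essentially the explicit realization that constitutes the paper's proof, so it must be spelled out rather than "read off". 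The same issue is sharper in (b): for $P_7$ you assert "the standard symmetric structure up to homothety", but for $n=2$ the isotropy representation of $\G_2(\bR^4)$ (universal cover $S^2\times S^2$) is reducible, so invariance does not force homothety with the standard orbit metric, and a single point-to-pole distance cannot detect the two independent scales; hence your scheme as written does not cover small $n$, and even for $n\ge 3$ the equivariant identification of $P_7$ with the standard Grassmannian has to be supplied from the explicit construction. Two smaller points: transferring the distance to the model tacitly uses that $V^\perp$ is the \emph{unique} pole of $(\G_n(\bR^{2n}),V)$ (true, but the paper proves the analogous uniqueness only for the complex Grassmannian in Lemma \ref{gonepole}, so a short argument is needed), and the appeal to Lemma \ref{pol} with $K_e=\SO_n\times\SO_n$ needs a remark that this orbit already exhausts the connected component of the centrosome (e.g.\ via the anticommutation description of midpoints).
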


\begin{proof}
  The proof will be divided into two steps. In the first step, we show that the results stated by the proposition hold true for a particular choice of  
  complex structures $J_1, \ldots, J_7$. (Afterwards we will address the general situation.)
Concretely, we write
$$\bH^{4n}=\bR^{4n}\oplus \bR^{4n}i \oplus \bR^{4n}j\oplus \bR^{4n}k,$$
and identify in this way
$\bH^{4n}=\bR^{16n}$. Take $J_1:=R_i$ and $J_2:=R_j$, that is, multiplication on
$\bH^{4n}$
 from the right by the
quaternionic units $i$, respectively $j$. We take $J_3$ in such a way, that
$J_1J_2J_3$ is given by
$$J_1J_2J_3(q_1, \ldots, q_{4n}):=(q_1, \ldots, q_{2n}, -q_{2n+1},
\ldots, -q_{4n}),$$
for all $(q_1,\ldots, q_{4n})\in \bH^{4n}$.

We now perform Milnor's construction of the space $P_4$ (cf.~\cite[p.~139]{Mi}, see also Subsection \ref{sochain} above).
First, note that the eigenspace decomposition of $J_1J_2J_3: \bH^{4n} \to \bH^{4n}$
is $\bH^{4n}=\bH^{2n}\oplus (\bH^{2n})^\perp$, where
$\bH^{2n}$ stands here for the space of all vectors in $\bH^{4n}$ with
the last $2n$ entries equal to 0 and $(\bH^{2n})^\perp$ is
the space of all vectors in $\bH^{4n}$ with
the first $2n$ entries equal to 0.
The space $P_4$ consists of all $J\in P_3$ which anticommute with
 $J_3$. If $J$ is such a transformation, then  $J_3J$
maps $\bH^{2n}$ to $(\bH^{2n})^\perp$ as a  $\bH$-linear map (relative to scalar multiplication from the right)  that preserves
the norm of any vector.

Let us now consider the subgroup of $\SO_{16n}$ which consists of  all  $\bR$-linear endomorphisms
of $\bR^{16n}$ that are  $\bH$-linear, i.e.~commute with $J_1$ and $J_2$,
and preserve the norm of a vector. This group is just $\Sp_{4n}$.
We prefer to see its elements as $4n\times 4n$ matrices,
say $A$, with entries in $\bH$, such that $AA^*=I_{4n}$.
From the above observation,  $J_3P_4:=\{J_3J\ : \ J\in P_4\}$ is the same as
the space of all elements of $\Sp_{4n}$ of the form
$$\left(%
\begin{array}{ccccccc}
0 & -C^{-1} \\
C & 0 \end{array}%
\right).
$$
Consider
$$
B_{2n}:=\left(%
\begin{array}{ccccccc}
0 & I_{2n} \\
-I_{2n} & 0 \end{array}%
\right),
$$
which is an element of $\Sp_{4n}$.
By translating our set $J_3P_4$ from the left by $B_{2n}$, we obtain
$$B_{2n}(J_3P_4)=
\left\{
 \left(%
\begin{array}{ccccccc}
C& 0\\
0 & C^{-1} \end{array}%
\right) \ : \
C\in \Sp_{2n} \right\}.$$
It is clear that $P_4$, as a submanifold of $\SO_{16n}$, is isometric
to $B_{2n}(J_3P_4)$, and the latter is a subspace of $\Sp_{4n}$.
More precisely, it is the image of    the embedding $\Sp_{2n} \to \Sp_{4n}$,
\begin{equation}\label{imed}C\mapsto \left(%
\begin{array}{ccccccc}
C& 0\\
0 & C^{-1} \end{array}%
\right).\end{equation}
The  metric on $\Sp_{4n}$  induced by its embedding in $\SO_{16n}$ is equal to the
standard metric multiplied by 4. (Indeed, $\Sp_{4n}$ is  contained in the subspace of
all elements of $\SO_{16n}$ which commute with $J_1$, which  is $\U_{8n}$, and the
resulting embedding $\Sp_{4n}\subset \U_{8n}$ is just the one described at the beginning of
Section \ref{inclubot}; moreover, the Riemannian metric on $\U_{8n}$ induced by its embedding in $\SO_{16n}$ is twice its standard metric.)  Thus, the submanifold metric induced on
$\Sp_{2n}$ via the embedding (\ref{imed}) is the standard one multiplied by  8. Finally note that, from the previous considerations, $\Sp_{2n}$ equipped with this metric is isometric to the subspace $P_4$ of $\SO_{16n}$.

We will now prove point (b) of Proposition \ref{isotype}  for a particular choice of
$ J_5, J_6$, and $J_7$.
As it has been pointed out by Eschenburg in
\cite[Section 19]{Es},   we may assume that $P_4=\Sp_{2n}$ and
$P_5, P_6, P_7, P_8$ are subspaces of $\Sp_{2n}$ defined
as follows: first, $P_5:=\{J' \in \Sp_{2n} \ : \ (J')^2=-I\}$;
then, for $\ell =5,6$, or $7$, pick $J'_\ell\in P_\ell$ and define
$P_{\ell +1}$ as one of the
top-dimensional  components of the space $\{J' \in P_\ell \ : \ J'J'_\ell+J'_\ell J'=0\}$.
In what follows $\Sp_{2n}$ is regarded as the space of all $\bR$-linear endomorphisms 
 of $\bH^{2n}$ which preserve the norm of a vector and commute with 
 $R'_i$ and $R'_j$, the operators given by multiplication from the right by
 $i$, respectively $j$.  
 
 We first consider $J'_5:\bH^{2n}\to \bH^{2n}$ given by multiplication from the left by
 the negative of the quaternionic unit $i$,
 that is
  $$J'_5(q_1, \ldots, q_{2n}) := -i(q_1, \ldots, q_{2n}),$$
 for all $(q_1, \ldots, q_{2n})\in \bH^{2n}$. One can see that $J'_5$ is an element of 
 $\Sp_{2n}$ and satisfies $(J'_5)^2=-I$.
 Note that the $1$-eigenspace of $R'_iJ'_5$ is $\bC^{2n}$, which is canonically
 embedded in $\bH^{2n}$.
Next, we take $J'_6: \bH^{2n}\to \bH^{2n}$ given by multiplication from
the left by $-j$:
 $$J'_6(q_1, \ldots, q_{2n}) := -j(q_1, \ldots, q_{2n}),$$
 for all $(q_1, \ldots, q_{2n})\in \bH^{2n}$. As before, $J'_6$ is in
 $\Sp_{2n}$ and $(J'_6)^2=I$. We also have $J'_5J'_6=-J'_6J'_5$.
The composed map $R'_jJ'_6$ leaves $\bC^{2n}$ invariant
and the  1-eigenspace of $R'_jJ'_6|_{\bC^{2n}}$ is $\bR^{2n}$,
which is canonically embedded in $\bC^{2n}$.
Finally, we choose $J'_7$ to be the map $\bH^{2n}\to \bH^{2n}$,
$$J'_7(q_1, \ldots, q_{2n}) := -k(q_1, \ldots,q_n, -q_{n+1},\ldots, -q_{2n}),$$
 for all $(q_1, \ldots, q_{2n})\in \bH^{2n}$. This new map is in
 $\Sp_{2n}$, it squares  to $-I$, and it anticommutes with both
 $J'_5$ and $J'_6$. The composed map $R'_kJ'_7$ leaves $\bR^{2n}$ 
 invariant and we have
 $$R'_kJ'_7(x_1, \ldots, x_{2n}) = (x_1, \ldots, x_n, -x_{n+1}, \ldots, -x_{2n})$$
 for all $(x_1, \ldots, x_{2n})\in \bR^{2n}$. Consequently, the 1-eigenspace of $R'_kJ'_7|_{\bR^{2n}}$ is $\bR^n$, that is,  the subspace of
$\bR^{2n}$ consisting of all vectors with the last $n$ components equal to 0.
The $(-1)$-eigenspace of $R'_kJ'_7|_{\bR^{2n}}$ is $(\bR^{n})^\perp$,
the orthogonal complement of $\bR^n$ in $\bR^{2n}$.

We are especially interested in the embedding of $P_8$ in $\Sp_{2n}$.
By \cite[p.~141]{Mi}  (see also \cite[Section 19, item 8']{Es}), one can identify $P_8$  with one of the two
connected components of the space of all orthogonal transformations from
$\bR^n$ to $(\bR^n)^\perp$; the identification is given by
$J'\mapsto J'_7J'|_{\bR^n}$. We deduce that
$J'_7P_8$ is one of the two connected components of the subspace of $\Sp_{2n}$
consisting of all matrices of the form
$$\left(%
\begin{array}{ccccccc}
0 & -D^{-1} \\
D & 0 \end{array}%
\right)$$
where $D\in \O_n$. We may assume that
$J'_7P_8$ is the space of all matrices of the form above with
$D\in \SO_{n}$.
Let us now consider the matrix
$$
B_{n}:=\left(%
\begin{array}{ccccccc}
0 & I_{n} \\
-I_{n} & 0 \end{array}%
\right)
$$
and observe that
$$B_n(J'_7P_8)=\left\{\left( \begin{array}{ccccccc}
D & 0 \\
0 & D^{-1} \end{array}%
\right)
\ : \ D\in \SO_n\right\}.$$
The subspaces $P_8$ and $B_n(J'_7P_8)$ of $\Sp_{2n}$ are isometric.
We only need to characterize the submanifold metric on $\SO_n$
induced by the embedding $\SO_n\to \Sp_{2n}$,
$$D\mapsto
\left( \begin{array}{ccccccc}
D & 0 \\
0 & D^{-1} \end{array}%
\right),$$
where $\Sp_{2n}$ is equipped with the standard metric multiplied by eight (by point (a)).
To this end we first look at the subspaces $\U_{2n}$ and $\SO_{2n}$ of
$\Sp_{2n}$: the metric induced on $\U_{2n}$ is eight times its canonical metric
(see the beginning of Section \ref{inclubot}),
thus also the metric on $\SO_{2n}$ is eight times its canonical metric.
Consequently, the metric on $\SO_n$ we are interested in is equal to the standard one multiplied by 16.

If $J_1, \ldots, J_7$ are now arbitrary, then the
 results stated by Proposition \ref{isotype}
remain true. Indeed,  one can easily see that in this general
set-up, the spaces $P_0$ and $P_1$ are the same as above,
whereas each of   $P_2, \ldots, P_8$ differ from the ones
 described above by group conjugation inside $\SO_{16n}$.
\end{proof}

\section{Simple Lie groups as symmetric spaces and their involutions}\label{ligrp}

Let $G$ be a compact, connected, and simply connected simple Lie group.
Equipped with  a bi-invariant metric,
$G$ becomes a
 Riemannian symmetric space, as explained in
Example \ref{anyc}. The following result has been proved in \cite{Le}
(see the proof of Theorem 3.3 in that paper).
Since it plays an essential role in our Subsection \ref{bottch},  we decided to state it separately and  give the details of  the proof.

\begin{proposition}\label{gisom} Let $\tau : G\to G$ be an isometric involution with the property that
$\tau(e)=e$, where $e$ is the identity element of $G$. Then there exists an involutive group
automorphism $\mu:G\to G$ such that either
\begin{equation*}\tau(g)=\mu(g) \ {\it for \ all \ } g\in G \end{equation*}
or
\begin{equation*}\tau(g)=\mu(g)^{-1} \ {\it for \ all \ } g\in G. \end{equation*}
Moreover, in the second case, the space $(G^\tau)_e$ (the connected component through
$e$ of the fixed point set $G^\tau$) is a totally geodesic submanifold of $G$ which is
isometric to $G/G^\mu$.
Here $G/G^\mu$ is equipped with the symmetric space structure induced by
some bi-invariant metric on $G$.
\end{proposition}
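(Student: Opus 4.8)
The plan is to deduce the statement from the well-known description of the isometries of a compact simple Lie group that fix the identity element, and then to read off the ``moreover'' part from the theory of the Cartan embedding. First I would prove the structural fact that any isometry $f$ of $G$ with $f(e)=e$ is either a group automorphism of $G$ or of the form $g\mapsto\mu(g)^{-1}$ for some group automorphism $\mu$ of $G$. For this, recall from Example \ref{anyc} that the identity component of the isometry group is $\mathrm{Iso}_0(G)=(G\times G)/\Delta(Z(G))$, acting by $(a,b)\cdot x=axb^{-1}$; the left translations form a closed connected subgroup $G_L$ and the right translations a closed connected subgroup $G_R$, each isomorphic to $G$, whose Lie algebras are the two simple ideals of $\g\oplus\g=\mathrm{Lie}(\mathrm{Iso}_0(G))$. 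Since $\mathrm{Iso}_0(G)$ is the identity component of the full isometry group, conjugation by $f$ induces an automorphism $\Phi$ of $\mathrm{Iso}_0(G)$, whose differential must permute the two simple ideals of $\g\oplus\g$ (here simplicity of $\g$ is essential). Hence $\Phi$ carries $G_L$ isomorphically onto $G_L$ or onto $G_R$. In the first case $f\circ L_g\circ f^{-1}=L_{\alpha(g)}$ for a bijection $\alpha$ of $G$; evaluating at $f(e)=e$ gives $f=\alpha$, and the same relation forces $\alpha$ to be a homomorphism, so $\alpha\in\mathrm{Aut}(G)$. In the second case $f\circ L_g\circ f^{-1}$ is a right translation for every $g$; evaluating at $e$ exhibits $f$ as an anti-automorphism $\beta$ of $G$, and then $\mu(g):=\beta(g)^{-1}$ is a group automorphism with $f(g)=\mu(g)^{-1}$.

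Applying this to $\tau$: if $\tau=\mu\in\mathrm{Aut}(G)$ then $\mu^2=\tau^2=\mathrm{id}$, which is the first alternative; if $\tau(g)=\mu(g)^{-1}$ then $\tau^2(g)=\mu(\mu(g)^{-1})^{-1}=\mu^2(g)$, so again $\mu^2=\mathrm{id}$ and $\mu$ is an involutive automorphism, which is the second alternative. For the ``moreover'' part, assume $\tau(g)=\mu(g)^{-1}$ with $\mu$ involutive and set $K=G^\mu$, so $G^\tau=\{g\in G:\mu(g)=g^{-1}\}$. I would use the Cartan-type map $\kappa\colon G\to G$, $\kappa(g)=g\,\mu(g)^{-1}$. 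A one-line computation gives $\mu(\kappa(g))=\kappa(g)^{-1}$, so $\kappa(G)\subseteq G^\tau$, and $\kappa(g)=\kappa(g')$ iff $g^{-1}g'\in K$, so $\kappa$ factors through an injective immersion $G/K\to G^\tau$, which is an embedding because $G$ is compact. Comparing tangent spaces at $e$ — both the image of $G/K$ and $G^\tau$ have tangent space the $(-1)$-eigenspace $\p$ of $d\mu_e$ — shows the image is open in $G^\tau$, hence closed, hence equal to the connected component $(G^\tau)_e$. Now $(G^\tau)_e$ is a connected component of the fixed point set of the isometric involution $\tau$, hence reflective and in particular totally geodesic in $G$ (cf.\ the discussion following Lemma \ref{tg}). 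Finally, the diffeomorphism $G/K\xrightarrow{\ \sim\ }(G^\tau)_e$ intertwines left multiplication on $G/K$ with the $G$-action $g\cdot h:=g\,h\,\mu(g)^{-1}$ on $(G^\tau)_e$, which is by isometries; pulling back the submanifold metric therefore yields a $G$-invariant metric on $G/K$, and since $G$ is simple this is an irreducible symmetric space, which admits a $G$-invariant metric unique up to a positive scalar. Absorbing that scalar into the choice of bi-invariant metric on $G$ gives the asserted isometry $G/G^\mu\cong(G^\tau)_e$.

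The main obstacle is the first step — the passage from ``$f$ is an isometry fixing $e$'' to the rigid algebraic alternative. A slightly different route, which is the one taken in \cite{Le}, is to argue infinitesimally: $d\tau_e$ is an orthogonal transformation of $\g$ preserving the curvature tensor $R(X,Y)Z=-\tfrac14[[X,Y],Z]$, and simplicity of $\g$ forces such a transformation to be an automorphism or the negative of an automorphism of $\g$, which then integrates (as $G$ is simply connected) to $\mu$ or to $g\mapsto\mu(g)^{-1}$. I would present whichever of the two versions comes out shorter; the remaining steps are identical.
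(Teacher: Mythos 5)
Your proposal is correct, and its overall strategy is the same as the paper's: both use that the identity component of the isometry group is $(G\times G)/\Delta(Z(G))$, that simplicity of $\g$ forces the automorphism induced by conjugation with $\tau$ to preserve or swap the two simple ideals of $\g\oplus\g$ (whence automorphism versus twisted inverse), and both ultimately realize $(G^\tau)_e$ as the image of $g\mapsto g\mu(g)^{-1}$. The differences are in execution. For the dichotomy, the paper argues infinitesimally: it shows the induced involution of the isometry group commutes, on the Lie algebra level, with the flip (because $\tau$ commutes with $s_e$), writes its differential via a single Lie algebra involution $\mu$ acting on both factors, and then integrates; you instead conjugate the subgroups of left and right translations and evaluate at $e$, which exhibits $\tau$ directly as an automorphism or anti-automorphism and dispenses with the commutation claim. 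For the ``moreover'' part, the paper proves transitivity of the action $g.x=gx\tau(g)$ on $(G^\tau)_e$ by writing a point of the component as $\exp(X)$ with $(d\tau)_e(X)=X$ and taking the square root $\exp\bigl(\tfrac{1}{2}X\bigr)$ --- a short, constructive argument needing only that the component is totally geodesic --- whereas you use the Cartan map $\kappa(g)=g\mu(g)^{-1}$ together with an equal-dimension open-and-closed argument; this needs invariance of domain and an equivariance (constant-rank) remark to see that $G/G^\mu\to G$ is an injective immersion, but it yields the identification $G/G^\mu\cong(G^\tau)_e$ in one stroke. You are also more explicit than the paper on the final metric statement: irreducibility of $G/G^\mu$ (from simplicity of $\g$) makes the invariant metric unique up to scale, which is precisely why ``some bi-invariant metric on $G$'' suffices; the paper leaves this normalization implicit.
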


\begin{proof} Let $\hat{G}$ be the identity component of the isometry group of $G$.
Then $\tau$ induces the involutive group automorphism $\hat{\tau} : \hat{G}\to \hat{G}$,
$f\mapsto {\tau}\circ f \circ {\tau}$. Let $\hat{\g}$ be the Lie algebra of $\hat{G}$ and
denote by $\hat{\tau}_*:\hat{\g}\to \hat{\g}$
the differential map of $\hat{\tau}$ at the point $\hat{e}$, which is the identity element of
$\hat{G}$.  We know that $\hat{G}=(G\times G)/\Delta(Z(G))$, where $Z(G)$ is the
center of $G$. Thus, if we denote the Lie algebra of $G$ by $\g$, then we have
$\hat{\g}=\g\times \g$. Consider the map $\sigma : G\times G \to G\times G$,
$\sigma(g_1,g_2)=(g_2,g_1)$, for all $g_1,g_2\in G$ along with its differential map at the identity element,
that is $\sigma_*:=(d\sigma)_e : \hat{\g}\to \hat{\g}$,
$\sigma_*(x_1,x_2)=(x_2,x_1)$, for all $x_1,x_2\in \g$.

\noindent {\it Claim 1.} $\hat{\tau}_*\circ \sigma_*=\sigma_* \circ \hat{\tau}_*$.

Indeed,  $\sigma_*$ can also be described as the differential at $e$ of the  map $\hat{G}\to \hat{G}$,
$f\mapsto s_e\circ f\circ s_e$, where  $s_e$ is the geodesic symmetry of $G$ at $e$
(see \cite[Ch.~IV, Theorem 3.3]{He}). We only need to notice that the automorphism
$\hat{G}\to \hat{G}$ described
above commutes with $\hat{\tau}$. In turn, this follows from the fact that $\tau\circ s_e=s_e\circ \tau$ (both sides of the equation are isometries of $G$ whose value at $e$ is $e$ and whose differential map at $e$ is equal to $-(d\tau)_e$).

Let us now observe that $\hat{\tau}_*(\g\times \{0\})$ is an ideal of $\g\times \g$.
It can only be equal to $\g\times \{0\}$ or to $\{0\}\times \g$, since $\g$ is a simple
Lie algebra.

\noindent{\it Case 1.} $\hat{\tau}_*(\g\times \{0\})=\g\times \{0\}$.
There exists $\mu : \g \to \g$ an involutive Lie algebra automorphism such that
$\hat{\tau}_*(x,0)=(\mu(x),0)$, for all $x\in \g$. From Claim 1 we deduce that
$\hat{\tau}_*(0,x)=(0,\mu(x))$, for all $x\in \g$, thus
$$\hat{\tau}_*(x_1,x_2)=(\mu(x_1),\mu(x_2)),$$
for all $x_1,x_2\in \g$.  We consider the group automorphism of $G$ whose differential at
$e$ is $\mu$ and denote it also by $\mu$. We have
$$\hat{\tau}([g_1,g_2])=[\mu(g_1),\mu(g_2)],$$
for all $g_1,g_2\in G$, where the brackets $[ \ , \ ]$ indicate the coset modulo
$\Delta(Z(G))$. Using the identification $\hat{G}=(G\times G)/\Delta(Z(G))$ and the explicit form of its action on $G$ given by Equation (\ref{ligr}),
this implies
$$\tau(g_1\tau(h)g_2^{-1})=\mu(g_1)h\mu(g_2)^{-1},$$
for all $g_1,g_2,h\in G$. Thus, $\tau(g)=\mu(g)$ for all $g\in G$.

\noindent{\it Case 2.} $\hat{\tau}_*(\g\times \{0\})= \{0\}\times \g$.
This time, there exists $\mu : \g \to \g$ an involutive Lie algebra automorphism such that
$\hat{\tau}_*(x,0)=(0,\mu(x))$, for all $x\in \g$. From Claim 1 we deduce that
$\hat{\tau}_*(0,x)=(\mu(x),0)$, for all $x\in \g$, thus
$$\hat{\tau}_*(x_1,x_2)=(\mu(x_2),\mu(x_1)),$$
for all $x_1,x_2\in \g$. Again, we consider the group automorphism $\mu :G \to G$ whose differential at
$e$ is $\mu$. This time we have
$$\hat{\tau}([g_1,g_2])=[\mu(g_2),\mu(g_1)],$$ which implies
$$\tau(g_1\tau(h)g_2^{-1})=\mu(g_2)h\mu(g_1)^{-1},$$
for all $g_1,g_2,h\in G$. This implies $\tau(g)=\mu(g)^{-1}$, for all $g\in G$.

We now prove the last assertion in the proposition. We are in Case 2.
Consider the action of $G$ on $G^\tau$ given by
$G\times G^\tau \to G^\tau$, $g.x:=gx\tau(g)$, for all $g\in G$ and $x\in G^\tau$.
Since $G$ is connected, it leaves $(G^\tau)_e$ invariant.
The corresponding  action is isometric, where  $(G^\tau)_e$ is equipped with the submanifold metric induced by its embedding in $G$.

  \noindent {\it Claim 2.} The action $G\times (G^\tau)_e \to (G^\tau)_e$ is transitive.

 To justify this, we take $x\in (G^\tau)_e$ and show that there exists $g\in G$
 with $x=g.e=g\tau(g)$. Indeed, let $\gamma : \bR\to (G^\tau)_e$ be a geodesic in
 $(G^\tau)_e$  with $\gamma(0)=e$ and $\gamma(1)=x$. Since $(G^\tau)_e$ is a totally geodesic subspace of $G$, $\gamma$ is a geodesic in $G$, hence
 we have
 $\gamma(t) =\exp(tX)$, for all $t\in \bR$, where $X$ is an element of $\g$.
 From $\tau(\gamma(t)) = \gamma(t)$ for all $t\in \bR$ we deduce
 that $(d\tau)_e(X)=X$. Then $g:=\exp(\frac{1}{2}X)$ is in $(G^\tau)_e$ and  we have
 $g\tau(g)=g^2=\exp(X)=x$.

It only remains to observe that the stabilizer of $e$ under the action mentioned in the claim is
 $G^\mu$. \end{proof}

\bibliographystyle{abbr}

\begin{thebibliography}{BBBBBBBB}


\bibitem[Bo-59]{Bo} R.~Bott, {\it The stable homotopy of the classical groups},
Ann.~Math.~{\bf 70} (1959), 313 - 337

\bibitem[Br-tD-85]{Br-tD} T.~Br\"ocker and T.~tom Dieck, {\it Representations of Compact Lie Groups},
Graduate Texts in Mathematics, Vol.~98, Springer-Verlag, New York, 1985

\bibitem[Bu-92]{Bu} J.~M.~Burns, {\it Homotopy of compact symmetric spaces}, 
Glasg. ~Math.~J.~{\bf 34} (1992), 221 - 228


\bibitem[Ch-Na-78]{Ch-Na2} B.-Y.~Chen and T.~Nagano, {\it Totally geodesic submanifolds of
symmetric spaces II}, Duke Math.~J.~{\bf 45} (1978), 405 - 425

\bibitem[Ch-Na-88]{Ch-Na1} B.-Y.~Chen and T.~Nagano, {\it A Riemannian geometric invariant and its applications to a problem of Borel and Serre}, Trans.~Amer.~Math.~Soc.~{\bf 308} (1988), 273 - 297



\bibitem[Es-08]{Es} J.-H.~Eschenburg, {\it Quaternionen und Oktaven I}, lecture notes,
Universit\"at Augsburg, 2008

\bibitem[Ga-Hu-La-04]{Ga-Hu-La} S.~Gallot, D.~Hulin, and
J.~Lafontaine, {\it Riemannian Geometry, 3rd Edition}, Springer-Verlag, Berlin, 2004


\bibitem[He-01]{He} S.~Helgason, {\it Differential Geometry, Lie Groups, and Symmetric Spaces}, Graduate Studies in Mathematics, Vol.~34, American Mathematical Society, Providence, Rhode Island, 2001

\bibitem[Le-74]{Le} D.~S.~P.~Leung, {\it On the classification of reflective submanifolds of
Riemannian symmetric spaces}, Indiana Univ.~Math.~J.~{\bf 24} (1974), 327--339

\bibitem[Le-79]{Le1} D.~S.~P.~Leung, {\it Reflective submanifolds III. Congruency and isometric reflective submanifolds and corrigenda to the classification of reflective submanifolds}, J.~Differential Geom.~{\bf 14} (1979), 167-177


\bibitem[Lo-69]{Lo} O.~Loos, {\it Symmetric Spaces}, Mathematics Lecture Notes Series, W.~A.~Benjamin, Inc., New York, 1969


\bibitem[Ma-Qu-10]{Ma-Qu} A.-L.~Mare and P.~Quast, {\it On some spaces
of minimal geodesics in Riemannian symmetric spaces},  to appear in
the Quart.~J.~Math.~doi: 10.1093/qmath/har003

\bibitem[May-77]{May} J.~P.~May, {\it $E_\infty$ Ring Spaces and $E_{\infty}$ Ring Spectra}, Lecture Notes in Mathematics,
Vol.~577, Springer-Verlag, Berlin 1977

\bibitem[Mi-69]{Mi} J.~Milnor, {\it Morse Theory}, Annals of Mathematics Studies,
Vol.~{\bf 51}, Princeton University Press, Princeton, 1969

\bibitem[Mit-88]{Mit} S.~A.~Mitchell, {\it Quillen's theorem on buildings and the loops on a symmetric space}, Enseign.~Math.~{\bf 34} (1988), 123-166

\bibitem[Na-88]{Na} T.~Nagano, {\it The involutions of compact  symmetric spaces},
Tokyo J.~Math.~{\bf 11} (1988), 57 - 79

\bibitem[Na-92]{Na2} T.~Nagano, {\it The involutions of compact  symmetric spaces II},
Tokyo J.~Math.~{\bf 15} (1992), 39 - 82


\bibitem[Na-Ta-91]{Na-Su} T.~Nagano and M.~S.~Tanaka, {\it The spheres in symmetric spaces},
Hokkaido Math.~J.~{\bf 20} (1991), 331 - 352

\bibitem[Qu-10]{Qu} P.~Quast, {\it Complex Structures and Chains of Symmetric Spaces},
Habilitationsschrift, University of Augsburg, 2010

\bibitem[Qu-11]{Qu1} P.~Quast, {\it Centrioles in symmetric spaces}, preprint 2011

\bibitem[Qu-Ta-11]{Qu-Su} P.~Quast and M.~S.~Tanaka, {\it Convexity of 
reflective submanifolds in symmetric R-spaces}, preprint 2011

 \bibitem[Sa-96]{Sa} T.~Sakai, {\it Riemannian Geometry}, Translations of Mathematical Monographs, Vol.~149, American Mathematical Society, Providence, Rhode Island, 1996

 \bibitem[Ta-64]{Ta} M.~Takeuchi, {\it On the fundamental group and the group of isometries  of a symmetric space}, J.~Fac.~Sci.~Univ.~Tokyo,
 Sect.~I, {\bf 10} (1964), 88 - 123



 \bibitem[Wo-84]{Wo} J.~A.~Wolf, {\it Spaces of Constant Curvature, Fifth Edition},
Publish or Perish, Inc., Wilmington, Delaware, 1984


\end{thebibliography}

\end{document}